\documentclass[12pt,fleqn]{amsart}
\usepackage[utf8]{inputenc}
\usepackage[english]{babel}
\usepackage{color}
\usepackage{indentfirst}
\usepackage{amssymb}
\usepackage{amsthm}

\textwidth=36cc
\baselineskip 16pt
\textheight 620pt
\headheight 20pt
\headsep 10pt
\usepackage{color}
\topmargin 0pt
\footskip 40pt
\parskip 0pt
\oddsidemargin 10pt
\evensidemargin 10pt

\usepackage{mathtools,amsmath,amssymb,amsthm,mathrsfs}
\usepackage{enumerate}



\newtheorem{thm}{Theorem}
\newtheorem{lemma}[thm]{Lemma}

\newtheorem{mydef}[thm]{Definition}

\newtheorem{cor}[thm]{Corollary}

\numberwithin{equation}{thm}
\numberwithin{thm}{section}
\newtheorem{question}[thm]{Question}


\newcommand{\cA}{{\mathcal A}}
\newcommand{\cB}{\protect{\mathcal B}}

\newcommand{\cN}{\mathcal N}
\newcommand{\cM}{\mathcal M}
\newcommand{\cP}{{\mathcal P}}

\newcommand{\cU}{{\mathcal U}}


\newcommand{\fA}{\mathfrak A}
\newcommand{\fB}{\mathfrak B}
\newcommand{\fC}{\mathfrak C}
\newcommand{\fD}{\mathfrak D}

\newcommand{\fM}{\mathfrak M}
\newcommand{\fb}{\mathfrak b}
\newcommand{\fc}{\mathfrak c}

\newcommand{\btu}{\bigtriangleup}

\newcommand{\wh}{\widehat }
\newcommand{\ult}{{\rm ult}}
\newcommand{\vf}{\varphi}

\newcommand{\stevo}{Todor\v{c}evi\'c}

\newcommand{\sm}{\setminus}

\newcommand{\sub}{\subseteq}

\newcommand{\clop}{\protect{\rm Clopen} }

\newcommand{\fin}{\mbox{\it fin}}
\newcommand{\eps}{\varepsilon}

\DeclareMathOperator{\alg}{alg}
\DeclareMathOperator{\ba}{ba}

\newcommand{\la}{\langle}
\newcommand{\ra}{\rangle}
\newcommand{\growth}{\gamma\omega\!\sm\!\omega}
\newcommand{\er}{\mathbb R}
\title[Compactifications of $\omega$ and $c_0$]{Compactifications of $\omega$ and the Banach space $c_0$}
\author[P. Drygier]{Piotr Drygier}
\author[G. Plebanek]{Grzegorz Plebanek}

\address{Instytut Matematyczny\\ Uniwersytet Wroc\l awski\\ Pl.\ Grunwaldzki 2/4\\
50-384 Wroc\-\l aw\\ Poland} \email{piotr.drygier@math.uni.wroc.pl, grzes@math.uni.wroc.pl}

\thanks{The second author was partially supported by NCN grant 2013/11/B/ST1/03596 (2014-2017).}
\subjclass[2010]{Primary 46B26, 46E50, 54D35; secondary 28C15, 03E50.}

\begin{document}

\begin{abstract}
We investigate for which compactifications $\gamma\omega$ of the discrete space of natural numbers $\omega$,
 the natural copy of the Banach space $c_0$ is complemented in $C(\gamma\omega)$.
We show, in particular, that the separability of the remainder $\gamma\omega\sm\omega$ is neither sufficient nor
necessary for $c_0$ being complemented in $C(\gamma\omega)$ (for the latter our result is proved
under the continuum hypothesis).
We analyse, in this context,  compactifications of $\omega$ related to embeddings  of the measure algebra into $P(\omega)/\fin$.

We also prove that a Banach space $C(K)$ contains a rich family of complemented copies of $c_0$ whenever the compact space $K$ admits
only measures of countable Maharam type.
\end{abstract}

\maketitle

\section{Introduction}

If $X$ is a Banach space and $Y$ is a closed subspace of $X$ then $Y$ is said to be complemented in $X$ if there is a closed subspace $Z$ of $X$
such that $X=Y \oplus Z$. This is equivalent to saying that there is a bounded linear operator $P$ from $X$ onto $Y$ which is a projection, i.e.\
$P\circ P=P$. Recall that typically,  unless a Banach space $X$ is isomorphic to a Hilbert space, there are many closed uncomplemented subspaces of $X$.

The classical Banach space $c_0$ plays a special role when we speak of complementability:
by Sobczyk's theorem \cite{So41} every isomorphic copy of $c_0$ is complemented in any separable superspace.
 Cabello Sanchez,  Castillo and Yost \cite{SCY00} offer an interesting discussion of various proofs and aspects of Sobczyk's theorem; see also
 a survey paper by Godefroy \cite{Go01}.

Complementability of isomorphic copies of $c_0$ has been investigated for nonseparable spaces.
 A Banach space $X$ is said to have the {\em Sobczyk property} if every subspace of $X$ isomorphic to $c_0$ is complemented in $X$.
Molt\'o \cite{Mo91} singled out a certain topological property of the $weak^*$ topology in $X^*$ ensuring that $X$ has the Sobczyk property.
Correa and Tausk \cite{CT14} proved that the space $C(K)$ has the Sobczyk property whenever $K$ is a compact line (generalizing
an earlier  result from \cite{Pa93}, where the same was proved for $K$ being the double arrow space);
see also \cite{ACGJM02}, \cite{CK15}, \cite{FKLW11}, \cite{GP03} for related results.

Let $\gamma\omega$ be a compactification of the discrete space $\omega$ of natural numbers. Then $c_0$ can be naturally identified with  the subspace $Y$ of $C(\gamma\omega)$,
where
\[Y=\{f\in C(\gamma\omega): f|(\gamma\omega\sm\omega)\equiv 0\},\]
simply by identifying the unit vector $e_n$ in $c_0$ with $\chi_{\{n\}}\in C(\gamma\omega)$.
In the sequel, we shall call the space $Y$ the {\em natural copy} of $c_0$ in $C(\gamma\omega)$.
We also use the following terminology.

\begin{mydef}
We say that a compactification $\gamma\omega$ is {\em smooth} if the natural copy of $c_0$ is complemented in $C(\gamma\omega)$.
\end{mydef}

The main problem that is considered in the present paper may be stated informally as follows.

\begin{question}\label{question}
Which compactifications $\gamma\omega$ are smooth?
\end{question}

Note that every metrizable compactification  $\gamma\omega$ is smooth because $C(\gamma\omega)$ is then separable and {\em every} copy of $c_0$
is complemented in $C(\gamma\omega)$ by Sobczyk's theorem.  On the other hand,  the maximal compactification $\beta\omega$ is not smooth:
$C(\beta\omega)$ is isometric to $l_\infty$ and, by Phillips' theorem \cite{Ph40}, $c_0$ is not complemented in $l_\infty$.
In fact $C(\beta\omega)$ is a Grothendieck space so it  contains no complemented copies of $c_0$ (see the next section).
Note also that if we have two comparable compactification $\gamma_1\omega\leq \gamma_2\omega$, in the sense that there is a continuous surjection $\gamma_2\omega\to \gamma_1\omega$ that does not move points from $\omega$, then  $\gamma_1\omega$ is smooth provided $\gamma_2\omega$ is smooth.
Thus smooth compactifications form a natural subclass of all compactifications of $\omega$ and \ref{question} calls for a reasonable characterization of smoothness.

Question \ref{question} has been  motivated by Castillo \cite{Ca01} and by conversations with Wies{\l}aw Kubi\'s and Piotr Koszmider.
In particular, W.\ Kubi\'s observed that if $\gamma\omega$ is smooth then the remainder $\gamma\omega\sm\omega$ must carry a strictly positive measure (see section
\ref{sec:5}), and asked if the converse implication holds.

The paper is organized as follows.
In section \ref{sec:2} we recall the standard facts related to complementability of $c_0$. In section \ref{sec:3} we introduce the terminology and notation concerning Boolean algebras
and finitely additive measures and then translate facts form section \ref{sec:2} to the setting that is used throughout the paper.

In section \ref{sec:4} we consider compactifications of $\omega$ defined by subalgebras $\fA$ of $P(\omega)$ containing all finite sets and such that
the quotient map $\fA\to\fA/\fin$ admits a lifting. We prove in particular that every separable zerodimensional compact space is homeomorphic to the remainder of a smooth
compactification (Theorem \ref{4:3}).

Our main results read as follows.

\begin{enumerate}
\item If we take a compactification $\gamma\omega$ related to an embedding of the measure algebra into $P(\omega)/\fin$
then $\gamma\omega$ is not smooth (Theorem \ref{5:1}).
Since in such a case $\growth$ is homeomorphic to the Stone space of the measure algebra, $\growth$ does carry a strictly
positive measure. A~related result on such $\gamma\omega$ is a content of  Theorem \ref{5:5}.
\item Under  CH there is a smooth compactification with a nonseparable remainder, see  Theorem \ref{6:1}.
\item There is a non-smooth compactification of $\omega$ with a separable remainder, see  Theorem \ref{7:1}.
\end{enumerate}

The conclusion is that, as it seems,  the smoothness of $\gamma\omega$ is not directly related to simple topological properties of
$\gamma\omega\sm\omega$. In fact,   a smooth compactification may have the same remainder as another non-smooth one,
see Corollary \ref{4:4}.
We do not know if  (2) above is provable in the usual set theory;
it is likely that for our argument we can relax CH to the assumption $\fb=\fc$. However, we have not been able to
show without additional set-theoretic assumptions a formally weaker assertion:
{\em there is a compactification of $\omega$ with a nonseparable remainder that carries a strictly positive measure},
cf.\ Drygier and Plebanek \cite{DP15}.

In the final section we prove a general result on  $C(K)$ spaces containing complemented copies of $c_0$. Theorem \ref{8:4} says that
if a compact space $K$ has a certain measure theoretic property then every isomorphic copy of $c_0$ inside $C(K)$ contains a complemented subcopy
of $c_0$. Our result  is related to the work of  Molt\'o \cite{Mo91}  and Galego and Plichko  \cite{GP03}.

\section{Preliminaries}\label{sec:2}

In the sequel, $K$ (possibly with some subscript) always denotes a compact Hausdorff space and $C(K)$ stands for
the Banach space of (real-valued) continuous functions on $K$ equipped with the usual supremum norm.
The dual space $C(K)^*$ is identified with the space $\cM(K)$ of all signed Radon measures of bounded variation defined on the Borel $\sigma$-algebra
on $K$. For $\mu\in \cM(K)$ and $f\in C(K)$ we write $\mu(g)=\int_K f\;{\rm d}\mu$ for simplicity.
Recall that every $\mu\in\cM(K)$ can be written as $\mu=\mu^+-\mu^-$, where $\mu^+,\mu^-$ are nonnegative orthogonal measures.
Then $|\mu|$, the total variation of $\mu$, is defined as $|\mu|=\mu^+ + \mu^-$,
and  the norm of $\mu$ is  $||\mu||=|\mu|(K)$.  If $x\in K$ then $\delta_x\in \cM(K)$ denotes the probability Dirac measure at $x$.
The basic facts on $C(K)$ and $\cM(K)$ may be found in Albiac and Kalton \cite{AK05} or Diestel \cite{Di84}.

 The following well-known lemma,  establishing a connection between sequences of  measures on $K$ and
complementability of $c_0$ in $C(K)$   originates in Veech's proof of Sobczyk's theorem \cite{Ve71}.
We write here $\delta(n,k)$ for the Kronecker symbol.

\begin{lemma}\label{2:1}
  Let $T\colon c_0 \to C(K)$ be an isomorphic embedding and let $Te_n = \varphi_n$.
  Then the following conditions are equivalent:
  \begin{enumerate}[(i)]
    \item $T[c_0]$ is complemented in $C(K)$;
    \item there exist   bounded sequences $(\mu_n)_n$ and  $(\nu_n)_n$ in $ \cM(K)$
    such that
    \begin{enumerate}[---]
      \item $\nu_n(\varphi_k) = 0$ for every  $n,k\in\omega$,
      \item $\mu_n(\varphi_k) = \delta(n,k)$  for every  $n,k\in\omega$,
      \item $\mu_n - \nu_n \to 0$ in the weak$^*$ topology.
    \end{enumerate}
  \end{enumerate}
\end{lemma}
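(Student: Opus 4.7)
The plan is to exploit the standard identification of bounded operators $S\colon C(K)\to c_0$ with bounded weak$^*$-null sequences $(\mu_n)_n$ in $\cM(K)$ via $(Sf)_n=\mu_n(f)$: boundedness of $\|\mu_n\|$ follows from $\|S\|<\infty$, while the requirement that $Sf$ lie in $c_0$ (and not merely in $\ell_\infty$) is exactly weak$^*$-nullness of $(\mu_n)$. This correspondence is the only nontrivial input---an immediate consequence of $c_0^*=\ell_1$ together with $C(K)^*=\cM(K)$; beyond it the argument reduces to linearity and continuity. The role of $\nu_n$ in (ii) is merely that of a correction measure vanishing on $\{\varphi_k\}_k$, so one could equivalently reformulate (ii) using a single weak$^*$-null sequence; the two-sequence form is retained because in later applications it is easier to construct $\mu_n$ and $\nu_n$ separately than to produce a weak$^*$-null sequence directly.

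For the implication (i)$\Rightarrow$(ii), I would take a bounded projection $P\colon C(K)\to T[c_0]$ and form the composition $S=T^{-1}\circ P\colon C(K)\to c_0$. The identification above supplies bounded measures $\mu_n$ with $(Sf)_n=\mu_n(f)$, automatically weak$^*$-null because $Sf\in c_0$. The condition $P\varphi_k=\varphi_k$ translates into $S\varphi_k=e_k$, i.e.\ $\mu_n(\varphi_k)=\delta(n,k)$, and setting $\nu_n\equiv 0$ satisfies the remaining two bullets trivially.

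For (ii)$\Rightarrow$(i), I would reverse the construction: define $S\colon C(K)\to c_0$ by $(Sf)_n=(\mu_n-\nu_n)(f)$. The third bullet guarantees $Sf\in c_0$, while uniform boundedness of $(\mu_n)_n$ and $(\nu_n)_n$ gives $\|S\|<\infty$. Set $P=T\circ S\colon C(K)\to T[c_0]$; the first two bullets give $(\mu_n-\nu_n)(\varphi_k)=\delta(n,k)$, hence $S\varphi_k=e_k$ and $P\varphi_k=\varphi_k$. Since $P$ is continuous and the linear span of $\{\varphi_k:k\in\omega\}$ is dense in $T[c_0]$, $P$ restricts to the identity on $T[c_0]$ and is therefore the required projection. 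No real obstacle arises; the proof is a clean application of the operator-to-measure-sequence dictionary.
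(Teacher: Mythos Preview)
Your argument is correct. The $(ii)\Rightarrow(i)$ direction is essentially identical to the paper's: your operator $P=T\circ S$ is exactly the paper's $Pf=\sum_n(\mu_n-\nu_n)(f)\,\varphi_n$, just factored explicitly through $c_0$.

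For $(i)\Rightarrow(ii)$ you take a slightly different and in fact simpler route. You read the measures $\mu_n$ directly off the projection via $S=T^{-1}\circ P$ and then set $\nu_n\equiv 0$, so your $(\mu_n)$ is already weak$^*$-null. The paper instead first invokes surjectivity of $T^*\colon\cM(K)\to\ell_1$ to pick \emph{arbitrary} bounded lifts $\mu_n$ of $e_n^*$ (these need not be weak$^*$-null and are chosen without reference to $P$), and only then uses the projection to build the correction $\nu_n(f)=\mu_n(f)-\mu_n(Pf)$. Your version is cleaner for the lemma as stated; the paper's version isolates the fact that any bounded lift $(\mu_n)$ of $(e_n^*)$ will do once a suitable $(\nu_n)$ is supplied, a viewpoint that resurfaces in the proof of Theorem~\ref{8:4}.
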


\begin{proof}
 To check   $(ii) \Rightarrow (i)$ define $P\colon C(K) \to C(K)$ by
  \[ Pf = \sum_{n\in\omega} (\mu_n - \nu_n)(f)\cdot  \varphi_n. \]
Then $P$   is easily seen to be a bounded projection from $C(K)$ onto $T[c_0]$.

  For the converse implication consider the dual operator
  $T^*\colon\cM(K)\to c_0^* = l_1$.
  Since $T$ is an isomorphic embedding,
  $T^*$ is a surjection so for each $e_n^* = e_n\in l_1$ there exists a measure
  $\mu_n\in\cM(K)$ such that
  $T^* \mu_n = e_n^*$,  and the sequence of $\mu_n$ is norm bounded.
  We have  $T^*(\mu_n)(e_k) = e_n^* (e_k) = \delta(n,k)$ and
   $T^*(\mu_n)(e_k) = \mu_n (T e_k) = \mu_n(\varphi_k)$, so
  $\mu_n(\varphi_k) = \delta(n,k)$.
  For every $n$ define a measure $\nu_n\in\cM(K)$ putting
  $\nu_n(f) = \mu_n(f) -\mu_n(Pf)$ for $f\in C(K)$. Then $\nu_n$ vanishes on $P[C(K)]$ and
  for every $f\in C(K)$, taking $x\in c_0$ such that $Tx=Pf$, we get
  \[\mu_n(f)-\nu_n(f)=\mu_n(Pf)=\mu_n(Tx)=e_n^*(x)\to 0,\]
as required.
\end{proof}

Here is the most obvious illustration of the previous lemma.

\begin{cor}\label{2:2}
  If $K$ contains a non-trivial converging sequence then $C(K)$ contains a complemented copy of $c_0$.
\end{cor}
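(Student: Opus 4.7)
The plan is to apply Lemma \ref{2:1} directly with Dirac measures. Let $(x_n)$ be the given non-trivial converging sequence in $K$ with limit $x_\infty$. After passing to a subsequence, I may assume that all $x_n$ are distinct and distinct from $x_\infty$. Since $K$ is compact Hausdorff (hence normal), standard arguments yield a family of pairwise disjoint open sets $(U_n)_n$ with $x_n\in U_n$ and $x_\infty\notin \ol{U_n}$ for every $n$; indeed, using Hausdorffness one separates $x_\infty$ from each $x_n$ by disjoint open sets and then shrinks inductively to make the neighborhoods of the $x_n$'s mutually disjoint as well.

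Next, by Urysohn's lemma I would pick continuous functions $\varphi_n\in C(K)$ with $0\le \varphi_n\le 1$, $\varphi_n(x_n)=1$ and $\supp \varphi_n\subseteq U_n$. Because the supports are pairwise disjoint, the map $T\colon c_0\to C(K)$ defined on the unit vectors by $Te_n=\varphi_n$ and extended by linearity and continuity is an isometric embedding: for any finitely supported $(a_n)$ one has $\|\sum_n a_n\varphi_n\|=\sup_n |a_n|$, and the estimate extends to all of $c_0$.

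It remains to check complementability using the equivalence in Lemma \ref{2:1}. Take $\mu_n=\delta_{x_n}$ and $\nu_n=\delta_{x_\infty}$; both sequences are norm-bounded in $\cM(K)$. Since $\varphi_k$ vanishes on $K\sm U_k$ and in particular at $x_\infty$, we have $\nu_n(\varphi_k)=\varphi_k(x_\infty)=0$ for all $n,k$. Further, $\mu_n(\varphi_k)=\varphi_k(x_n)$ equals $1$ if $n=k$ and equals $0$ if $n\ne k$ (because then $x_n\notin U_k$), so $\mu_n(\varphi_k)=\delta(n,k)$. Finally, for any $f\in C(K)$ the continuity of $f$ at $x_\infty$ combined with $x_n\to x_\infty$ yields $(\mu_n-\nu_n)(f)=f(x_n)-f(x_\infty)\to 0$, i.e.\ $\mu_n-\nu_n\to 0$ in the weak$^*$ topology. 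All three conditions of Lemma \ref{2:1}(ii) hold, so $T[c_0]$ is complemented in $C(K)$.

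There is no real obstacle here; the only mildly delicate point is the construction of the disjoint neighborhoods $U_n$ separated from $x_\infty$, which uses Hausdorffness of $K$ and convergence $x_n\to x_\infty$ in an essentially standard way. Once this is done, the Dirac measures provide the most natural witnesses to the criterion of Lemma \ref{2:1}.
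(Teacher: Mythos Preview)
Your proof is correct and follows essentially the same approach as the paper: construct disjoint open neighborhoods $U_n$ of the $x_n$, take Urysohn functions supported in these sets to define the embedding $T$, and then verify Lemma~\ref{2:1}(ii) with the Dirac measures $\mu_n=\delta_{x_n}$ and $\nu_n=\delta_{x_\infty}$. The only difference is that you spell out a few more details (distinctness of the $x_n$, the isometric nature of $T$, and the explicit check of each condition), but the argument is the same.
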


\begin{proof}
  Let $(x_n)_n$ be a sequence in  $K$ converging to $x\in K$ and  such that $x_n \neq x$ for every $n$.
 Then it is easy to construct a  pairwise disjoint family $\{U_n\colon n\in\omega\}$ of open subsets
    of $K$ such that $x_n \in U_n$ for each $n\in\omega$.

 For every $n$ we can find a continuous function $f_n:K\to [0,1]$ such that $f_n(x_n)=1$ and $f_n$ vanishes outside
 $U_n$. Now if we define $T:c_0\to C(K)$ by $Te_n=f_n$ then $T[c_0]$ is complemented in $C(K)$. Indeed, we
 can apply Lemma \ref{2:1} with $\mu_n=\delta_{x_n}$ and $\nu_n=\delta_x$ for every $n$.
 \end{proof}

The way we stated Lemma \ref{2:1} is motivated by its application to  compactifications of $\omega$.

\begin{cor}\label{2:3}
  A compactification $\gamma\omega$ is smooth if and only if there exists
    a bounded sequence of measures $(\nu_n)_n$ in $\cM(\gamma\omega)$
  such that $|\nu_n|(\omega)=0$ for every $n$ and  $\nu_n-\delta_n\to 0$ in the $weak^*$ topology.
\end{cor}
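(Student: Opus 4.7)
The plan is to deduce this directly from Lemma \ref{2:1} by specializing to the natural embedding $T\colon c_0\to C(\gamma\omega)$ given by $Te_n = \chi_{\{n\}} =: \varphi_n$, under which the three bullets of (ii) acquire a very concrete form. Before anything else, I want to record the preliminary observation that for any Radon measure $\nu$ on $\gamma\omega$ the identity $\nu(\varphi_k)=0$ for every $k\in\omega$ is equivalent to $|\nu|(\omega)=0$: each $\{k\}\subset\gamma\omega$ is an isolated (hence clopen) point, so $|\nu|(\{k\})=|\nu(\{k\})|$, and then $\sigma$-additivity of the positive measure $|\nu|$ on the countable disjoint union $\omega=\bigsqcup_k\{k\}$ finishes the translation.

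For the backward direction, starting from a sequence $(\nu_n)$ as in the statement, I would set $\mu_n = \delta_n$ and verify Lemma \ref{2:1}(ii) directly: $\mu_n(\varphi_k)=\delta_n(\{k\})=\delta(n,k)$ is immediate, $\nu_n(\varphi_k)=0$ holds by the preliminary observation, and $\mu_n-\nu_n=\delta_n-\nu_n\to 0$ in weak$^*$ is the assumption; the sequence $(\mu_n)$ is trivially bounded. For the forward direction, starting from bounded sequences $(\mu_n),(\nu_n)$ supplied by Lemma \ref{2:1}(ii), I plan to replace $\nu_n$ with
\[
\widetilde\nu_n := \nu_n + \delta_n - \mu_n,
\]
so that $\widetilde\nu_n(\{k\}) = 0 + \delta(n,k)-\delta(n,k)=0$ for every $k$, which by the preliminary observation gives $|\widetilde\nu_n|(\omega)=0$, while $\delta_n-\widetilde\nu_n = \mu_n-\nu_n\to 0$ in weak$^*$; boundedness of $(\widetilde\nu_n)$ is inherited from the two input sequences.

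There is no real obstacle here, since the argument is essentially the observation that, for this particular embedding, the ``biorthogonal'' measures in Lemma \ref{2:1}(ii) may canonically be taken to be the Dirac masses $\delta_n$ themselves, leaving only the construction of the ``annihilator'' measures $\nu_n$ as substantive content.
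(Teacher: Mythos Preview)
Your argument is correct and follows exactly the intended route: the paper does not spell out a proof of Corollary~\ref{2:3} but simply records it as the specialization of Lemma~\ref{2:1} to the natural embedding $Te_n=\chi_{\{n\}}$, and your two directions make that specialization explicit. The only point worth noting is that your preliminary observation $|\nu|(\{k\})=|\nu(\{k\})|$ tacitly uses the mutual singularity of $\nu^+$ and $\nu^-$ on the atom $\{k\}$; this is fine, but you might mention it.
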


Let us note that the smoothness of $\gamma\omega$ is directly related to the existence of a certain extension operator.
If $L$ is a closed subspace of a compact space $K$ then an extension operator $E:C(L)\to C(K)$ is a bounded linear operator
such that $E(f)|L=f$ for every $f\in C(L)$; see Avil\'es and Marciszewski \cite{AM15} for a recent result on extension operators
and references therein.

\begin{lemma}\label{2:3.1}
A compactification $\gamma\omega$ is smooth if and only if there is an extension operator $C(\growth)\to C(\gamma\omega)$.
\end{lemma}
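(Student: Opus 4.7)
The plan is to identify the natural copy $Y$ of $c_0$ in $C(\gamma\omega)$ with the kernel of the restriction map, and then to recognize the equivalence as the standard splitting dichotomy: the kernel of a bounded surjection between Banach spaces is complemented if and only if the surjection admits a bounded right inverse.

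First I would observe that $Y = \{f \in C(\gamma\omega) : f|_{\growth} \equiv 0\}$ coincides with $\ker R$, where $R \colon C(\gamma\omega) \to C(\growth)$ is the restriction operator $Rf = f|_{\growth}$. Since $\growth$ is closed in the compact Hausdorff (hence normal) space $\gamma\omega$, Tietze's extension theorem shows that $R$ is surjective; clearly $\|R\| \le 1$.

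For the direction $(\Leftarrow)$, given a bounded extension operator $E \colon C(\growth) \to C(\gamma\omega)$, so that $RE = \mathrm{id}_{C(\growth)}$, I would set $P := I - ER$. Then $RP = R - RER = 0$, so $P$ maps into $\ker R = Y$; and for $f \in Y$ one has $Pf = f - E(Rf) = f - E(0) = f$. Hence $P$ is a bounded projection of $C(\gamma\omega)$ onto $Y$, which means $\gamma\omega$ is smooth. For the converse $(\Rightarrow)$, suppose $Y$ is complemented with complement $Z$. Then $R|_Z \colon Z \to C(\growth)$ is injective (because $Z \cap Y = \{0\}$ and $Y = \ker R$) and surjective (because $R$ is surjective and $Y \subseteq \ker R$, so the values of $R$ on $Z$ already exhaust $R[C(\gamma\omega)] = C(\growth)$). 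By the open mapping theorem, the inverse $E := (R|_Z)^{-1} \colon C(\growth) \to C(\gamma\omega)$ is bounded, and by construction $RE = \mathrm{id}$, so $E$ is an extension operator.

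There is no genuine obstacle in this argument: once one writes $Y$ as $\ker R$, the statement is a textbook consequence of the splitting lemma for short exact sequences of Banach spaces. The only substantive inputs are Tietze's theorem (ensuring $R$ is onto) and the open mapping theorem (ensuring the algebraic section produced by the complement is bounded).
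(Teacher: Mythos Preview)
Your proof is correct and follows essentially the same approach as the paper. In both arguments the $(\Leftarrow)$ direction is literally $P=I-ER$, and in the $(\Rightarrow)$ direction your $E=(R|_Z)^{-1}$ coincides with the paper's formula $E(f)=g-Pg$ (taking $Z=\ker P^{\perp}$, i.e.\ the range of $I-P$); the only cosmetic difference is that you invoke the open mapping theorem for boundedness of $E$, whereas the paper relies implicitly on choosing a Tietze extension $g$ with $\|g\|=\|f\|$.
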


\begin{proof}
Suppose that $P:C(\gamma\omega)\to c_0$ is a bounded projection (where $c_0$ is identified with its natural copy inside $C(\gamma\omega)$.
For $f\in C(\growth)$ take any extension of $f$ to a function $g\in C(\gamma\omega)$ and define $E(f)=g-Pg$.
Note that $E(f)$ is uniquely defined: if $g'\in C(\gamma\omega)$ is another extension of $f$ then $g'-g$ vanishes on the remainder so
$g'-g\in c_0$ and $P(g'-g)=g'-g$, that is $g'-Pg'=g-Pg$.

Suppose now that $E:C(\growth)\to C(\gamma\omega)$ is an extension operator. Then $Pg=g-E(g|\growth)$ defines a projection from
$C(\gamma\omega)$ onto $c_0$.
\end{proof}

We shall now recall the notion of a Grothendieck space  which for $C(K)$ spaces means being anti-Sobczyk.

\begin{mydef}
  A Banach space is said to be a \emph{Grothendieck space} if every
    $weak^*$ null sequence $(x^*_n)_n$ in $ X^*$ converges weakly (i.e.\ $x^{**}(x^*_n)\to 0$ for every
    $x^{**}\in X^{**}$).
\end{mydef}

For the proof of the following see Cembranos \cite{Ce84}.

\begin{thm}\label{2:4}
 Given a  compact space $K$,  the space $C(K)$ is Grothendieck  if and only if
    $C(K)$  does not contain a complemented copy of $c_0$.
\end{thm}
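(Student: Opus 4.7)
The statement is an equivalence, proved as two implications.

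For the easy direction, if $c_0$ is complemented in $C(K)$, it is (up to isomorphism) a quotient of $C(K)$. Since the Grothendieck property descends to quotients (the adjoint of a surjection $q \colon Y \to Z$ is an isomorphic embedding, and Hahn-Banach lifts every element of $Z^{**}$ to a member of $Y^{**}$, allowing weak convergence of $w^*$-null sequences to transfer from $Y^*$ back to $Z^*$), $c_0$ itself would have to be Grothendieck. But it is not: the coordinate functionals $e_n^* \in c_0^* = l_1$ are $w^*$-null since $e_n^*(x) = x_n \to 0$ for every $x \in c_0$, yet the element $(1,1,\ldots) \in l_\infty = c_0^{**}$ witnesses that they are not weakly null.

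For the converse, suppose $C(K)$ is not Grothendieck and fix a $w^*$-null sequence $(\mu_n) \subset \cM(K)$ that is not weakly null. Any weakly convergent subsequence of $(\mu_n)$ would agree with the $w^*$-limit $0$, so $(\mu_n)$ has no weakly convergent subsequence; equivalently, $\{\mu_n\}$ is not relatively weakly compact in $\cM(K)$. The Dieudonn\'e-Grothendieck characterisation of weak compactness in $\cM(K)$ via uniform vanishing on disjoint sequences of open sets then yields, after a subsequence and relabelling, a pairwise disjoint family $(V_n)$ of open subsets of $K$ and $\eps > 0$ with $|\mu_n(V_n)| \geq \eps$. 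Inner regularity of the Radon measures $\mu_n$ shrinks this to disjoint compacts $L_n \subset V_n$ with $|\mu_n(L_n)| \geq 3\eps/4$, and Urysohn's lemma furnishes continuous $\varphi_n \colon K \to [-1,1]$ supported in $V_n$ with $\mu_n(\varphi_n) \geq \eps/2$ (after a sign choice).

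The essential technical step is a sliding-hump extraction: since $\sum_k |\mu_n|(V_k) \leq \|\mu_n\|$ for each $n$ and $\mu_n(\varphi_k) \to 0$ for each fixed $k$ by $w^*$-nullity, a diagonal subsequence renders the off-diagonal quantities $A_{n,k} = \mu_n(\varphi_k)/\mu_n(\varphi_n)$, $n \neq k$, summably small --- so small that $A = I + E$ with $\|E\|_{l_1 \to l_1} < 1/2$, and $A^{-1}$ exists by Neumann series. Taking linear combinations of the normalised $\varphi_k/\mu_k(\varphi_k)$ through the rows of $A^{-1}$ produces $\psi_k \in C(K)$ with uniformly bounded norm and exact biorthogonality $\mu_n(\psi_k) = \delta(n,k)$; a direct computation using these biorthogonal functionals shows that $(\psi_k)$ is equivalent to the canonical basis of $c_0$, so $Te_n = \psi_n$ defines an isomorphic embedding $T \colon c_0 \to C(K)$.

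Finally apply Lemma \ref{2:1} with this $T$, the sequence $(\mu_n)$, and $\nu_n = 0$: condition (i) $\nu_n(\psi_k) = 0$ is vacuous, (ii) $\mu_n(\psi_k) = \delta(n,k)$ is the constructed biorthogonality, and (iii) $\mu_n - \nu_n = \mu_n \to 0$ in $w^*$ is the hypothesis. Hence $T[c_0]$ is complemented in $C(K)$. The principal obstacle throughout is the disjointification-plus-inversion step: the Dieudonn\'e-Grothendieck dichotomy relies essentially on the Radon structure of $\cM(K)$, without which weak compactness need not admit any such combinatorial description, and the sliding-hump bookkeeping is precisely what upgrades mere disjointness of open supports into the exact biorthogonality that Lemma \ref{2:1} demands.
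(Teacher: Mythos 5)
Your proposal is correct and follows essentially the same route as the proof the paper defers to (Cembranos \cite{Ce84}): the easy direction by pushing the weak$^*$-null but non-weakly-null coordinate functionals of $l_1=c_0^*$ through the projection, and the hard direction via the Dieudonn\'e--Grothendieck criterion, disjointly supported functions, a gliding-hump biorthogonalization, and an application of Lemma \ref{2:1} with $\nu_n=0$. One micro-repair: from ``weak$^*$-null but not weakly null'' you cannot conclude that \emph{no} subsequence converges weakly (a subsequence could converge weakly to $0$); the correct inference is that if $\{\mu_n\}$ were relatively weakly compact, Eberlein--\v{S}mulian and the uniqueness of the weak$^*$ limit would force the whole sequence to converge weakly to $0$, a contradiction, which is all you need.
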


Recall that typical examples of Grothendieck spaces are $l_\infty$ and, more generally, $C(K)$ spaces
where $K$ is extremally disconnected compact space, see \cite {AK05} or \cite{Di84}.

\section{Boolean algebras and compactifications}\label{sec:3}

We shall consider mainly zerodimensional compactifications of $\omega$ and those are naturally
related to Boolean subalgebras of $P(\omega)$. If $\fA$ is any Boolean algebra then $\ult(\fA)$ denotes its Stone space of all ultrafilters
on $\fA$. If $a\in\fA$ then $\wh{a}$ is the corresponding clopen set in $\ult(\fA)$, that is
\[\wh{a}=\{x\in \ult(\fA): a\in x\}.\]
A family $\cU\sub p$ is a base of the ultrafilter $p$ if every $A\in p$ contains some $U\in\cU$, in other words,
if $\{\wh{U}:U\in \cU\}$ is a local base at $p\in \ult(\fA)$.

Let $\fA$ be any Boolean algebra.
Then $\ba(\fA)$ will stand for the family of all finitely additive measures $\mu$ on $\fA$ that have bounded variation and $\ba_+(\fA)$ are finitely additive nonnegative
functions. We call any $\mu\in\ba(\fA)$ simply a measure. We now recall the following standard facts, see e.g.\ Semadeni \cite{Se71}, 18.7.
Every $\mu\in\ba_+(\fA)$ can be transferred onto the algebra of clopen subsets of $\ult(\fA)$ by the formula
$\wh{\mu}(\wh{a})=\mu(a)$, and then uniquely extended to a Radon measure on $\ult(\cA)$ (that Radon measure is  still denoted by $\wh{\mu}$).
Note  that the $weak^*$ topology on a bounded subset of $\cM(\ult(\fA))$ may be seen as the topology of convergence of
clopen subsets of $\ult(\fA)$. Hence for a bounded sequence $\mu_n$ in $\ba(\fA)$  we have $\wh{\mu_n}\to 0$ in the $weak^*$ topology of
$\cM(\ult(\fA))$ if and only if $\mu_n(a)\to 0$ for every $a\in\fA$.

If $\fA$ is a subalgebra of $P(\omega)$ and $\fA$ contains $\fin$, the family of all finite subsets of $\omega$, then $\ult(\fA)$ is a compactification of
$\omega$ --- we simply identify points in $\omega$ with principal ultrafilters  on $\fA$. We shall denote such a  compactification of $\omega$ by
$K_\fA$ and $K_\fA^*=K_\fA\sm\omega$ will stand for its remainder. Note that $K_\fA^*$ may be identified with $\ult(\fA/\fin)$.

Using the terminology and notation introduced above  we can rewrite  Corollary \ref{2:3} as follows.

\begin{lemma}\label{3:1}
Let $\fA$ be an algebra such that $\fin\sub\fA\sub P(\omega)$.
Then the compactification $K_\fA$ of $\omega$ is smooth if and only if there exists a bounded
sequence $(\nu_n)_n$ in $\ba(\fA)$  such that

\begin{enumerate}[(i)]
\item $\nu_n|\fin \equiv 0$ for every $n$, and
 \item $\nu_n-\delta_n\to 0$ on $\fA$, that is $(\nu_n - \delta_n)(A) \to 0$ for every $A\in\fA$.
 \end{enumerate}
\end{lemma}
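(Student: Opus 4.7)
The plan is to obtain Lemma \ref{3:1} as a direct translation of Corollary \ref{2:3} applied to $K=K_\fA$, using the standard bijective correspondence between $\ba(\fA)$ and $\cM(K_\fA)$ (already cited from Semadeni \cite{Se71}, 18.7). Specifically, if $\mu\in\ba(\fA)$ is written as $\mu=\mu^+-\mu^-$ (Jordan decomposition in $\ba(\fA)$), then each of $\mu^+,\mu^-$ extends uniquely to a nonnegative Radon measure on $K_\fA=\ult(\fA)$ via $\wh\mu(\wh a)=\mu(a)$ for $a\in\fA$, and conversely every $\nu\in\cM(K_\fA)$ restricts to an element of $\ba(\fA)$ when we identify $\fA$ with the algebra of clopen subsets of $K_\fA$.

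First I would verify the translation of the two technical conditions. For condition (i): since each $n\in\omega$ is an isolated point of $K_\fA$ (a principal ultrafilter), the singleton $\{n\}$ is clopen and corresponds to the element $\{n\}\in\fin\sub\fA$. Because $\omega=\bigcup_n\{n\}$ is a countable union of atoms of $|\nu_n|$, we have
\[ |\nu_n|(\omega)=\sum_{k\in\omega}|\nu_n(\{k\})|, \]
and so $|\nu_n|(\omega)=0$ holds if and only if $\nu_n(\{k\})=0$ for every $k$, which is equivalent to $\nu_n|\fin\equiv 0$ by finite additivity. For condition (ii), since $K_\fA$ is zerodimensional, the linear span of $\{\chi_{\wh a}:a\in\fA\}$ is (uniformly) dense in $C(K_\fA)$ by the Stone–Weierstrass theorem. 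Therefore for a norm-bounded sequence of measures, $weak^*$ convergence to $0$ is equivalent to convergence on every clopen set, i.e.\ to pointwise convergence on $\fA$; in particular $\nu_n-\delta_n\to 0$ in the $weak^*$ topology of $\cM(K_\fA)$ if and only if $(\nu_n-\delta_n)(A)\to 0$ for every $A\in\fA$.

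With these two equivalences at hand, both implications of the lemma follow. For the forward direction, assume $K_\fA$ is smooth; Corollary \ref{2:3} yields a bounded sequence $(\nu_n)$ in $\cM(K_\fA)$ satisfying the conditions there. Restricting each $\nu_n$ to $\fA$ gives a bounded sequence in $\ba(\fA)$ that satisfies (i) and (ii) by the translations above. Conversely, given a bounded sequence $(\nu_n)$ in $\ba(\fA)$ satisfying (i) and (ii), extend each $\nu_n$ to a Radon measure $\wh{\nu_n}$ on $K_\fA$; the resulting sequence is norm-bounded (extension preserves variation norm) and, by the same translations, satisfies the hypotheses of Corollary \ref{2:3}, whence $K_\fA$ is smooth.

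The only real subtlety is the equivalence $|\nu_n|(\omega)=0\iff\nu_n|\fin\equiv 0$; here one must remember that on an isolated point, the variation of a signed measure coincides with the absolute value of its signed mass, so that vanishing on singletons forces the variation of $\omega$ to vanish. Everything else is the routine passage between the Stone space and its underlying algebra of clopens, so no new ideas beyond those already available are required.
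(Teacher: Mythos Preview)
Your proposal is correct and is precisely the translation the paper has in mind: the paper does not give a separate proof of Lemma \ref{3:1} at all, but introduces it with the sentence ``Using the terminology and notation introduced above we can rewrite Corollary \ref{2:3} as follows,'' and the paragraph preceding it already records that $weak^*$ convergence of a bounded sequence in $\cM(\ult(\fA))$ amounts to pointwise convergence on $\fA$. Your argument simply makes this rewriting explicit, including the equivalence $|\nu_n|(\omega)=0\iff \nu_n|\fin\equiv 0$, so there is no difference in approach.
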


Note that in case $A\in\fA$ is infinite and co-infinite, condition (ii) above is equivalent to
\vspace{-2ex}

\begin{equation}\label{eq:conv}
  \lim_{ n\in A} \nu_n (A) = 1,
  \qquad \lim_{ n\notin A} \nu_n(A) = 0.
\end{equation}

We shall below often enlarge a given algebra $\fA\sub P(\omega)$ by adding a new set $X\subseteq \omega$;
let $\fA[X]$ be the algebra generated by $\fA$ and $X$. Then
\[  \fA[X] = \big\{(A\cap X) \cup (A'\sm X)\colon A,A' \in \fA\big\}.\]

If $\mu\in \ba_+(\fA)$ then $\mu_*$ and $\mu^*$ denote the corresponding inner and outer measure defined as
\[\mu_*(X)=\sup\{\mu(A): A\in\fA, A\sub X\},\quad \mu^*(X)=\inf\{\mu(A): A\in\fA, A\supseteq X\}.\]

The following fact on extensions of finitely additive measures is due to {\L}o\'s and Marczewski \cite{LM48}.

\begin{thm}\label{3:2}
Given an algebra $\fA$, $\mu\in\ba_+(\fA)$ and any $X$ the following formulas define extensions of
$\mu$ to $\mu_1,\mu_2\in \ba_+(\fA[X]$
\[ \mu_1\left( (A\cap X)\cup(A'\setminus X)\right) =\mu_*(A\cap X) + \mu^*(A'\sm X),\]
\[ \mu_2\left( (A\cap X)\cup(A'\setminus X)\right) =\mu^*(A\cap X) + \mu_*(A'\sm X).\]
Consequently, for every $t$ satisfying $\mu_*(X)\le t\le \mu^*(X)$, there is an extension of $\mu$ to
$\mu_t\in \ba_+(\fA[X])$ such that $\mu_t(X)=t$.
\end{thm}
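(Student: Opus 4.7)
The plan is to verify that $\mu_1$ is well-defined on $\fA[X]$, extends $\mu$, and is finitely additive; the corresponding claims for $\mu_2$ follow by an entirely symmetric argument; and the intermediate extensions $\mu_t$ are then obtained by convex combinations of $\mu_1$ and $\mu_2$.

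Well-definedness is immediate: if $B=(A_1\cap X)\cup(A_1'\sm X)=(A_2\cap X)\cup(A_2'\sm X)$, then $A_1\cap X=B\cap X=A_2\cap X$ and $A_1'\sm X=B\sm X=A_2'\sm X$, so the right-hand side of the defining formula depends only on $B$. That $\mu_1$ extends $\mu$ amounts to the identity $\mu_*(A\cap X)+\mu^*(A\sm X)=\mu(A)$ for every $A\in\fA$: every $B\in\fA$ with $B\sub A\cap X$ satisfies $A\sm B\supseteq A\sm X$, whence $\mu(B)+\mu^*(A\sm X)\le\mu(A)$; dually, every $C\in\fA$ with $A\sm X\sub C\sub A$ satisfies $A\sm C\sub A\cap X$, whence $\mu_*(A\cap X)+\mu(C)\ge\mu(A)$. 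Passing to suprema and infima yields equality.

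Finite additivity is the bulk of the argument. Given disjoint $B_1,B_2\in\fA[X]$ written as $B_i=(A_i\cap X)\cup(A_i'\sm X)$, disjointness forces $A_1\cap A_2\cap X=\emptyset$ and $A_1'\cap A_2'\sub X$, while $B_1\cup B_2=((A_1\cup A_2)\cap X)\cup((A_1'\cup A_2')\sm X)$. The required additivity then splits into two dual lemmas: (i) $\mu_*((A_1\cup A_2)\cap X)=\mu_*(A_1\cap X)+\mu_*(A_2\cap X)$ whenever $A_1\cap A_2\cap X=\emptyset$, and (ii) $\mu^*((A_1'\cup A_2')\sm X)=\mu^*(A_1'\sm X)+\mu^*(A_2'\sm X)$ whenever $A_1'\cap A_2'\sub X$. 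For (i), the superadditive direction comes from joining $\fA$-approximants $B\sub A_1\cap X$ and $C\sub A_2\cap X$, which are automatically disjoint since $B\cap C\sub A_1\cap A_2\cap X=\emptyset$; the reverse direction comes from partitioning any $\fA$-subset $D\sub(A_1\cup A_2)\cap X$ as $(D\cap A_1)\sqcup(D\sm A_1)$, the second piece landing in $A_2\cap X$. Statement (ii) is dual: subadditivity of $\mu^*$ is automatic, while for the reverse inequality one partitions any $\fA$-cover $E\supseteq(A_1'\cup A_2')\sm X$ as $(E\cap A_1')\sqcup(E\sm A_1')$ and uses $A_1'\cap A_2'\sub X$ to see that $E\sm A_1'\supseteq A_2'\sm X$.

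Evaluating at $X$ gives $\mu_1(X)=\mu_*(X)+\mu^*(\emptyset)=\mu_*(X)$ and analogously $\mu_2(X)=\mu^*(X)$, so for $t\in[\mu_*(X),\mu^*(X)]$ the measure $\mu_t=(1-s)\mu_1+s\mu_2$ with $s=(t-\mu_*(X))/(\mu^*(X)-\mu_*(X))$ (or $\mu_t=\mu_1$ when $\mu_*(X)=\mu^*(X)$) is the desired extension. The principal obstacle is statement (ii): in contrast to (i), one cannot approximate from inside but must split an $\fA$-cover from outside, and it is precisely the condition $A_1'\cap A_2'\sub X$, drawn from the disjointness of $B_1$ and $B_2$ outside $X$, that makes the split $E=(E\cap A_1')\sqcup(E\sm A_1')$ separate $A_1'\sm X$ from $A_2'\sm X$.
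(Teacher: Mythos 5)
Your proof is correct. The paper itself gives no argument for this statement --- it is quoted from {\L}o\'s and Marczewski \cite{LM48} --- so there is nothing to compare against; what you wrote is the standard proof of that result (well-definedness via $B\cap X$ and $B\sm X$, the two additivity lemmas for $\mu_*$ on the $X$-side and $\mu^*$ on the complement side, and convex combinations for the intermediate values), and all the steps, including the key splitting of an outer cover $E$ as $(E\cap A_1')\sqcup(E\sm A_1')$ using $A_1'\cap A_2'\sub X$, check out.
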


The following  lemma  on convergence of extended probability measures will be needed in section \ref{sec:6}.

\begin{lemma}\label{3:3}
Let $\fA$ be an algebra such that $\fin\sub\fA\sub P(\omega)$ and  let  $(\nu_n)_n$ be  a   sequence
of probability measures from $\ba_+(\fA)$ such that $\nu_n - \delta_n \to 0$.
Further let, for every $n$, $\widetilde\nu_n\in\ba_+(\fA[X])$ be any extension of $\nu_n$.

If $\widetilde \nu_n(X) - \delta_n (X) \to 0$
    then $\widetilde \nu_n - \delta_n \to 0$ on  $\fA[X]$.
\end{lemma}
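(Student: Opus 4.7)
The goal is to show $\widetilde\nu_n(B) - \delta_n(B) \to 0$ for every $B \in \fA[X]$. By the description of $\fA[X]$ given just before Theorem~\ref{3:2}, every such $B$ has the form $B = (A \cap X) \cup (A' \sm X)$ with $A, A' \in \fA$, and this union is disjoint since $X$ and $\omega \sm X$ are. Both $\widetilde\nu_n$ and $\delta_n$ therefore split as sums on the two pieces, so it suffices to verify the two separate convergences $\widetilde\nu_n(A \cap X) - \delta_n(A \cap X) \to 0$ and $\widetilde\nu_n(A' \sm X) - \delta_n(A' \sm X) \to 0$ for arbitrary $A, A' \in \fA$.

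\textbf{Sandwich via Fr\'echet bounds.} For any finitely additive probability $\mu$ on an algebra containing $A$ and $X$, one has the Fr\'echet inequalities
\[\max\bigl(0,\, \mu(A) + \mu(X) - 1\bigr) \;\le\; \mu(A \cap X) \;\le\; \min\bigl(\mu(A),\, \mu(X)\bigr),\]
with equality throughout when $\mu = \delta_n$, since then both bounds collapse to $\delta_n(A)\,\delta_n(X) = \delta_n(A \cap X)$. Apply this to the probability $\widetilde\nu_n$, which extends $\nu_n$ (so $\widetilde\nu_n(A) = \nu_n(A)$). The hypothesis $\nu_n - \delta_n \to 0$ on $\fA$ gives $\nu_n(A) - \delta_n(A) \to 0$, while the additional assumption gives $\widetilde\nu_n(X) - \delta_n(X) \to 0$. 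Because the maps $\min$ and $(a,b)\mapsto\max(0,a+b-1)$ are Lipschitz continuous, both ends of the sandwich converge to $\delta_n(A \cap X)$, and squeezing yields $\widetilde\nu_n(A \cap X) - \delta_n(A \cap X) \to 0$.

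\textbf{Second term and expected obstacle.} For $A' \sm X = A' \cap (\omega \sm X)$, rerun the previous step with $\omega \sm X$ in place of $X$, using that $\widetilde\nu_n(\omega \sm X) - \delta_n(\omega \sm X) = -\bigl(\widetilde\nu_n(X) - \delta_n(X)\bigr) \to 0$. Summing the two convergences gives the conclusion. There is no deep obstacle: the whole point is that every probability measure extending $\nu_n$ to $\fA[X]$ is pinned down to within $o(1)$ on each product $A \cap X$ by its values on $A$ and on the single new set $X$, via the Fr\'echet inequalities, regardless of how the extension was chosen; once this is observed, the lemma reduces to squeezing with the hypotheses in hand.
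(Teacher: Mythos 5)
Your proof is correct and is essentially the paper's own argument: the authors also reduce to the two pieces $A\cap X$ and $A'\sm X$ and use exactly the two bounds $\mu(A)+\mu(X)-1\le\mu(A\cap X)\le\min\bigl(\mu(A),\mu(X)\bigr)$, only split into the cases $n\in A\cap X$ and $n\notin A\cap X$ (via the reformulation \eqref{eq:conv}) rather than packaged as a single Fr\'echet sandwich. The squeeze formulation is a tidy way to present the same estimates.
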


\begin{proof}
We use here  \ref{eq:conv}. For any $A\in\fA$ if $n$ runs through $A\cap X$ then $\widetilde\nu_n(X)\to 1$ and
$\widetilde\nu_n(A)=\nu_n(A)\to 1$ so $\widetilde\nu_n(A\cap X)\to 1$ (using  $\widetilde \nu_n(\omega)=\nu_n(\omega)=1$).

Take any $\eps>0$. Then $\nu_n(A)<\eps$ if $n\notin A$ and $n\ge n_0$ and $\widetilde\nu_n(X)<\eps$ whenever
$n\ge n_1$ and $n\notin X$. Hence for $n\ge\max(n_0,n_1)$, if $n\notin A\cap X$ then either
$n\notin A$ and $\widetilde\nu_n(A\cap X)\le \nu_n(A)<\eps$ or $n\notin X$ and $\widetilde\nu_n(A\cap X)\le \widetilde\nu_n(X)<\eps$.

The convergence of $\widetilde\nu_n (A\sm X)$ may be checked in a similar way.
\end{proof}

Recall that a nonnegative measure $\mu\in\ba(\fA)$ is said to be {\em nonatomic} if for every $\eps>0$ there is a finite partition
of $1_\fA$ into pieces of measure $<\eps$. A signed measure $\mu$ is nonatomic if its variation $|\mu|$ is nonatomic.
We shall use the following two simple observations.

\begin{lemma}\label{3:4}
Given a Boolean algebra $\fA$ and a signed measure $\mu$ on $\fA$,
$\mu$ is nonatomic if and only if $\inf\{|\mu|(A): A\in p\}=0$ for every $p\in\ult(\fA)$, and this is equivalent to saying that
$\wh{\mu}$ is a Radon measure on $K_\fA$ vanishing on points of $K_\fA$.

If $\mu$ is nonatomic on $\fA$, $a\in\fA$ and $t<\mu(a)$ then for every $\eps>0$ there is $b\in\fA$ such that
$b\leq a$ and $|\mu(b)-t|<\eps$.
\end{lemma}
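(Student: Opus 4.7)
The plan is to treat the two assertions separately. For the three-way equivalence in part (1), I would reduce to nonnegative $\mu$ at the outset: nonatomicity is by definition a property of $|\mu|$, and since the positive and negative parts of $\mu$ extend to mutually singular Radon measures on $K_\fA$, the atoms of $\wh{\mu}$ coincide with those of $|\wh{\mu}|$. For nonnegative $\mu$, the equivalence between $\inf\{\mu(A):A\in p\}=0$ and $\wh{\mu}(\{p\})=0$ is then immediate from outer regularity of the Radon extension together with the fact that $\{\wh{A}: A\in p\}$ is a local base of clopen neighbourhoods at $p\in\ult(\fA)$.

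The remaining equivalence with nonatomicity is a standard compactness argument. The forward direction is routine: a finite partition of $1_\fA$ into pieces of measure less than $\eps$ intersects any ultrafilter in exactly one piece, forcing the claimed infimum to vanish. For the converse, fix $\eps>0$, and for each $p\in\ult(\fA)$ choose $A_p\in p$ with $\mu(A_p)<\eps$; then $\{\wh{A_p}:p\in\ult(\fA)\}$ is an open cover of the compact space $\ult(\fA)$, so a finite subcover yields $A_{p_1},\dots,A_{p_n}$ whose join is $1_\fA$, and I would disjointify these inside $\fA$ to obtain the required finite partition with small pieces.

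For part (2) I would argue by greedy exhaustion, working with nonnegative $\mu$ (which is the case the paper will apply). Using nonatomicity, partition $1_\fA$ into finitely many pieces $c_1,\dots,c_n$ with each $\mu(c_i)<\eps$ and set $a_i = a\wedge c_i$. Process the $a_i$ in order starting from $b_0=0_\fA$: at step $k$ set $b_k = b_{k-1}\vee a_k$ if $\mu(b_{k-1}) + \mu(a_k)\le t$, and $b_k = b_{k-1}$ otherwise. The final $b=b_n$ satisfies $\mu(b)\le t$, and since $\mu(a)>t$ at least one $a_j$ must have been rejected, forcing $\mu(b) + \mu(a_j) > t$; combined with $\mu(a_j)<\eps$ this yields $\mu(b) > t-\eps$. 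The only slightly delicate point in the whole proof is the reduction to nonnegative $\mu$ in part (1), which relies on mutual singularity of the Jordan parts on the Stone space; everything else amounts to standard Stone-space bookkeeping.
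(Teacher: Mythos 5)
The paper states Lemma \ref{3:4} without proof (it is introduced as one of ``two simple observations''), so there is no argument of the authors' to compare yours against; I can only assess your proposal on its own terms. Your treatment of the three-way equivalence is correct: the reduction to nonnegative measures via the Jordan decomposition is legitimate because the correspondence $\mu\mapsto\wh{\mu}$ is a lattice isometry of $\ba(\fA)$ onto $\cM(\ult(\fA))$, so $\wh{\mu^+}$ and $\wh{\mu^-}$ are indeed mutually singular and a point is an atom of $\wh{\mu}$ exactly when it is an atom of $|\wh{\mu}|=\wh{|\mu|}$; the identity $\inf\{\mu(A):A\in p\}=\wh{\mu}(\{p\})$ from outer regularity and the compactness/disjointification argument for the equivalence with nonatomicity are both standard and correctly executed.

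The gap is in part (2). You restrict to nonnegative $\mu$ on the grounds that ``this is the case the paper will apply,'' but that premise is false: in the proof of Theorem \ref{5:2} the lemma is invoked for the measures $\nu_{n}\in\ba(\fA)$, which are genuinely signed (only their norms are controlled), with $t=\tfrac12$ and $\nu_{n_k}(A_k)\ge \tfrac78$. Your accept/reject greedy scheme does not survive the passage to signed measures: after a piece $a_j$ is rejected with $\mu(b_{j-1})+\mu(a_j)>t$, later accepted pieces of negative measure can push $\mu(b)$ back below $t-\eps$, and the invariant $\mu(b_k)\le t$ also breaks down since acceptance no longer increases the value monotonically. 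The repair is easy and worth making explicit: partition $a$ into $a_1,\dots,a_n$ with $|\mu|(a_i)<\eps$ (note: control the \emph{variation}, not just $\mu$), set $b_k=a_1\vee\dots\vee a_k$, observe that consecutive values $\mu(b_k)$ differ by at most $\eps$ in absolute value while $\mu(b_0)=0$ and $\mu(b_n)=\mu(a)>t$, and take the first $k$ with $\mu(b_k)\ge t$; then $t\le\mu(b_k)<t+\eps$. This discrete intermediate-value argument works for signed $\mu$ whenever $0\le t<\mu(a)$, which is the regime actually needed (and is the implicit hypothesis under which the stated claim is true at all --- for $t$ well below $0$ and $\mu\ge 0$ the assertion is vacuously false, so some such restriction must be read into the statement in any case).
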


\section{Liftings}\label{sec:4}

Let $\fA$ be an algebra such that $\fin\sub\fA\sub P(\omega)$. Consider the canonical quotient map
\[\fA\to\fA/\fin$, $\fA\ni A\to A^\bullet\in\fA/\fin.\]
By a {\em lifting} of the quotient map we mean  a Boolean homomorphism $\rho\colon\fA/\fin\to\fA$ such that
$\rho(a)^\bullet=a$ for every $a\in\fA/\fin$.

\begin{lemma}\label{4:1}
For an algebra $\fA$ such that $\fin\sub\fA\sub P(\omega)$, the quotient map $\fA\to\fA/\fin$ admits a lifting if and only
there exists an Boolean algebra
  $\fB\sub P(\omega)$ such that every  $B\in\fB\sm\{\emptyset\}$ is infinite and
 $\fA$ is equal to $\alg(\fB\cup\fin)$, the algebra generated by $\fB$ and $\fin$.
\end{lemma}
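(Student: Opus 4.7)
The plan is to handle the two implications separately, with the backward direction carrying the bulk of the work. For $(\Leftarrow)$, suppose $\fB$ satisfies the stated conditions and $\fA=\alg(\fB\cup\fin)$. The strategy is to show that the map $\iota:\fB\to\fA/\fin$, $B\mapsto B^\bullet$, is a Boolean isomorphism; its inverse is then the desired lifting $\rho:\fA/\fin\to\fA$. Injectivity of $\iota$ is immediate from the hypothesis on $\fB$: if $B_1^\bullet=B_2^\bullet$, then $B_1\triangle B_2\in\fB\cap\fin=\{\emptyset\}$, so $B_1=B_2$.

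The main step is surjectivity of $\iota$, for which I would show that
\[\fC:=\{B\triangle F:B\in\fB,\ F\in\fin\}\]
is already a subalgebra of $P(\omega)$. Since $\fC$ plainly contains both $\fB$ and $\fin$, minimality of $\alg(\fB\cup\fin)$ will then force $\fA\subseteq\fC$ (the reverse inclusion being automatic), so every $A\in\fA$ has the form $B\triangle F$ and hence $A^\bullet=B^\bullet=\iota(B)$. Closure of $\fC$ under symmetric difference follows from $(B_1\triangle F_1)\triangle(B_2\triangle F_2)=(B_1\triangle B_2)\triangle(F_1\triangle F_2)$, and closure under complement from $(B\triangle F)^c=B^c\triangle F$. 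The one place where something could plausibly go wrong --- and the step I would flag as the main obstacle --- is closure under intersection; here I would expand in indicator arithmetic modulo $2$ to arrive at
\[(B_1\triangle F_1)\cap(B_2\triangle F_2)=(B_1\cap B_2)\triangle(B_1\cap F_2)\triangle(F_1\cap B_2)\triangle(F_1\cap F_2),\]
whose first term lies in $\fB$ and whose last three terms are finite, so the right-hand side belongs to $\fC$.

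For $(\Rightarrow)$, assume a lifting $\rho$ is given and set $\fB:=\rho[\fA/\fin]$. Being the image of a Boolean homomorphism, $\fB$ is a subalgebra of $\fA$ and hence of $P(\omega)$; moreover, if $B=\rho(a)$ is nonempty, then $a=\rho(a)^\bullet=B^\bullet\neq 0$ in $\fA/\fin$, forcing $B\notin\fin$, i.e.\ $B$ is infinite. Finally, for every $A\in\fA$ the defining property of $\rho$ yields $A\triangle\rho(A^\bullet)\in\fin$, so $A\in\alg(\fB\cup\fin)$; the opposite inclusion $\alg(\fB\cup\fin)\subseteq\fA$ is obvious since both $\fB$ and $\fin$ are contained in $\fA$. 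Apart from the intersection calculation above, everything is formal bookkeeping.
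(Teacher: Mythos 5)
Your proof is correct and follows essentially the same route as the paper's: both directions rest on the observation that each class in $\fA/\fin$ has a unique representative in $\fB$, so the quotient map restricted to $\fB$ is an isomorphism whose inverse is the lifting. You merely make explicit the step the paper leaves implicit, namely that $\alg(\fB\cup\fin)=\{B\triangle F\colon B\in\fB,\ F\in\fin\}$.
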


\begin{proof}
  If $\rho$ is a lifting, then put $\fB = \rho[\fA / \fin]$. For every nonzero $a\in\fA/\fin$ the set $\rho(a)$ is infinite because
  $\rho(a)^\bullet=a\neq 0$.

  If $\fA = \alg(\fB \cup \fin)$ then, by the property of $\fB$, for any element $a\in\fA/\fin$
  there exists exactly one $B_a\in\fB$ such that $B_a^\bullet = a$.
  Therefore we can define $\rho(a)=B_a$ and $\rho$ is a homomorphism.
\end{proof}

\begin{thm}\label{4:2}
  If $\fin\sub \fA \sub P(\omega)$ is such an algebra that the quotient map $\fA\to\fA/\fin$ admits a lifting
  then the compactification $K_\fA$  of $\omega$ is smooth.
\end{thm}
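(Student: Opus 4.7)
The plan is to produce the sequence $(\nu_n)_n$ required by Lemma \ref{3:1}, directly from the lifting. By Lemma \ref{4:1} I may assume $\fA = \alg(\fB\cup\fin)$ for some Boolean subalgebra $\fB\sub P(\omega)$ whose nonzero elements are all infinite. The first key observation is that every $A\in\fA$ has a \emph{unique} decomposition $A=B\btu F$ with $B\in\fB$ and $F\in\fin$: if $B\btu F = B'\btu F'$, then $B\btu B'\in\fB$ is finite, hence empty by the assumption on $\fB$.

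The composition $\pi = \rho\circ(\cdot)^\bullet\colon\fA\to\fB\sub\fA$ (the quotient map followed by the lifting) is then a Boolean homomorphism sending $A$ to its unique $\fB$-representative $B$. For each $n\in\omega$ set
\[ \nu_n(A) = \delta_n(\pi(A)) = \begin{cases} 1 & \text{if } n\in B,\\ 0 & \text{otherwise,}\end{cases} \]
where again $B=\pi(A)$. Since $\pi$ is a Boolean homomorphism and $\delta_n$ is a two-valued finitely additive measure on $\fB$, each $\nu_n$ is a two-valued finitely additive measure on $\fA$, so the sequence is uniformly bounded by $1$.

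It remains to check the two conditions of Lemma \ref{3:1}. If $A\in\fin$ then $\pi(A)=\emptyset$, so $\nu_n(A)=0$ and thus $\nu_n|\fin\equiv 0$. For arbitrary $A\in\fA$ with decomposition $A=B\btu F$, any $n>\max F$ satisfies $n\in A\iff n\in B$, so $\nu_n(A)=[n\in B]=[n\in A]=\delta_n(A)$. Hence $(\nu_n-\delta_n)(A)=0$ for all but finitely many $n$, which gives the required $weak^*$ convergence on $\fA$. Applying Lemma \ref{3:1} concludes that $K_\fA$ is smooth.

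There is no substantial obstacle in this argument: once Lemma \ref{4:1} is in hand, the lifting supplies a Boolean homomorphism $\pi$ that collapses the finite-set noise in $\fA$, and the measures $\delta_n\circ\pi$ satisfy the hypotheses of Lemma \ref{3:1} in the strongest possible way, namely the difference $\nu_n-\delta_n$ is eventually zero on each fixed $A\in\fA$, not merely convergent to zero. The only step that requires a moment of care is the uniqueness of the decomposition $A=B\btu F$, which is precisely what the ``no finite nonzero element'' clause of Lemma \ref{4:1} is designed to deliver.
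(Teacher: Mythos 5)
Your proof is correct and is essentially the paper's own argument: your measures $\nu_n=\delta_n\circ\pi$ coincide exactly with the Dirac measures $\delta_{x_n}$ at the ultrafilters $x_n=\{B\btu I\colon B\in p_n,\ I\in\fin\}$ used in the paper, where $p_n=\{B\in\fB\colon n\in B\}$. You merely phrase the construction via the homomorphism $\pi=\rho\circ(\cdot)^\bullet$ rather than via extensions of ultrafilters, and the verification that $\nu_n-\delta_n$ is eventually zero on each fixed $A\in\fA$ is the same.
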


\begin{proof}
  By Lemma  \ref{4:1} there exists an algebra $\fA$ of infinite
    subsets of $\omega$ such that $\fA = \alg(\fB\cup\fin)$.
 For every $n$  consider the ultrafilter $ p_n=\{B\in\fB\colon n\in B\}$ on $\fB$.
 Then $p_n$ extends to the nonprincipal ultrafilter $x_n$ on $\fA$, where
 \[ x_n=\{B\btu I\colon  B\in p_n, I\in\fin\}.\]

 It follows that $\delta_{x_n}-\delta_n\to 0$ on $\fA$ since for every $A\in\cA$
 we have $\delta_{x_n}(A)-\delta_n(A)=0$ except for finitely many $n$'s.
 Thus $K_\fA$ is smooth by Lemma \ref{3:1}.
\end{proof}

We note that thanks to Theorem \ref{4:2} one can easily define relatively big smooth compactifications of $\omega$.
Take for instance an independent sequence $\{B_\alpha\colon \alpha<\fc\}$ in $P(\omega)$ such that the set $ \bigcap_{\alpha\in I} B_\alpha^{\eps(\alpha)}$
is infinite for every finite $I\sub\fc$ and every $\eps\colon I\to\{0,1\}$.
Then the algebra $\fA$ generated by all $B_\alpha$'s and $\fin$ is such that $\fA/\fin$ has a lifting by Lemma \ref{4:1} and, by Theorem \ref{4:2}, $K_\fA$ is a smooth
compactification of $\omega$ which remainder is homeomorphic to the Cantor cube $2^\fc$. This can be generalized as follows.

\begin{thm}\label{4:3}
  If $L$ is a separable zerodimensional compact space
    then there exists a smooth compactification $\gamma\omega$ such that  $\growth$ is homeomorphic to $L$.
\end{thm}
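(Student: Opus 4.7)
The plan is to invoke Theorem \ref{4:2}: it suffices to exhibit a Boolean subalgebra $\fB\sub P(\omega)$ in which every nonzero element is infinite and whose Stone space $\ult(\fB)$ is homeomorphic to $L$. Setting $\fA=\alg(\fB\cup\fin)$, Theorem \ref{4:2} then yields a smooth compactification $K_\fA$; moreover, as in the proof of Lemma \ref{4:1}, the composition $\fB\hookrightarrow\fA\to\fA/\fin$ is a Boolean isomorphism, so the remainder $\growth\cong\ult(\fA/\fin)$ is homeomorphic to $\ult(\fB)\cong L$.

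To construct $\fB$, let $\cC$ denote the algebra of clopen subsets of $L$, so that $L\cong\ult(\cC)$ by Stone duality. Fix an enumeration $(x_n)_{n\in\omega}$ of a countable dense subset of $L$, arranged so that each of its points appears infinitely often (a trivial reindexing). Define $\phi\colon\cC\to P(\omega)$ by $\phi(U)=\{n\colon x_n\in U\}$; this is manifestly a Boolean homomorphism, and I take $\fB:=\phi[\cC]$.

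The remaining verifications are routine. Injectivity of $\phi$ follows from density: for distinct clopens $U,V$ the open set $U\btu V$ contains some $x_n$, witnessing $\phi(U)\neq\phi(V)$. The infinite-multiplicity feature of the enumeration guarantees that $\phi(U)$ is infinite whenever $U\neq\emptyset$, since any $x_n\in U$ is hit by infinitely many indices. Hence $\phi$ is a Boolean isomorphism onto $\fB$, giving $\ult(\fB)\cong\ult(\cC)\cong L$, and Theorem \ref{4:2} applied to $\fA=\alg(\fB\cup\fin)$ completes the proof. There is no genuine obstacle here: once Lemma \ref{4:1} and Theorem \ref{4:2} are available, the result is a clean packaging of Stone duality with a countable-dense-set enumeration and a repetition trick.
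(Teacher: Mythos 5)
Your proof is correct and follows essentially the same route as the paper: the paper takes a partition $\{B_d\colon d\in D\}$ of $\omega$ into infinite sets indexed by a countable dense set $D\sub L$ and sends a clopen $V$ to $\bigcup_{d\in V\cap D}B_d$, which is exactly your map $U\mapsto\{n\colon x_n\in U\}$ once you identify $B_d$ with the (infinite) fibre $\{n\colon x_n=d\}$ of your repetition-enriched enumeration. The remaining steps --- injectivity via density, infiniteness of nonzero elements, and the identification of the remainder with $\ult(\fB)$ via Lemma \ref{4:1} and an appeal to Theorem \ref{4:2} --- match the paper's argument.
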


\begin{proof}

We write $\clop(L)$ for the algebra of clopen subsets of $L$.
We define an embedding $\vf\colon   \clop(L)\to P(\omega)$ as follows.
Take a partition $\{B_d: d\in D\}$ of $\omega$ into infinite sets and let for $V\in\clop(L)$
\[\vf(V)= \bigcup_{d\in V\cap D} B_d.\]
Then for $U,V\in\clop(L)$, if  $\vf(U)=\vf(V)$ then $U\cap D=V\cap D$ so $U=V$. It is easy to check that
$\vf$ is indeed an isomorphic embedding of $\clop(L)$ into $P(\omega)$ and the algebra $\cB=\vf\left[\clop(L)\right]$ has the property
that every nonempty $B\in\fB$ is infinite.

Letting $\fA$ be the algebra in $P(\omega)$ generated by $\fB$ and $\fin$ we conclude from Theorem \ref{4:2} that
$K_\fA$ is a smooth compactification. Moreover, $K_\fA^*$ can be identified with $\ult(\fB)$ which is homeomorphic
to $\ult(\clop(L))$, so to $L$ itself.
\end{proof}

We prove below that there is a compactification $\gamma\omega$ which is not smooth but $\growth$ is separable
(and zerodimensional). The conclusion is that smoothness of a compactification $\gamma\omega$ cannot be decided by examining
topological properties of $\growth$ alone.

\begin{cor}\label{4:4}
There are two compactifications $\gamma \omega$ and $\eta\omega$ such that
$\growth\cong \eta\omega\sm\omega$, while
$\eta\gamma$ is smooth and  $\gamma\omega$ is not smooth.
\end{cor}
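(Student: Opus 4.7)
The plan is to combine Theorem \ref{7:1} with Theorem \ref{4:3}. These two results are, together, essentially a ready-made proof of the corollary: the former supplies a non-smooth compactification with a separable remainder, while the latter converts any separable zerodimensional compact space into the remainder of a smooth compactification. Since Corollary \ref{4:4} asserts only the existence of two compactifications with homeomorphic remainders that behave differently with respect to smoothness, it suffices to line these two statements up.

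Concretely, I would first invoke Theorem \ref{7:1} to choose a non-smooth compactification $\gamma\omega$ such that the remainder $L:=\gamma\omega\sm\omega$ is separable. Since the compactification produced in Theorem \ref{7:1} is of the form $K_\fA$ for a subalgebra $\fA$ of $P(\omega)$ containing $\fin$ (this is the only setting considered throughout Sections \ref{sec:3}--\ref{sec:4} and where the authors work in Section \ref{sec:7}), the space $\gamma\omega$ is zerodimensional, and hence so is its closed subspace $L$. Thus $L$ is a separable zerodimensional compact Hausdorff space.

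Next, apply Theorem \ref{4:3} to this $L$: it yields a smooth compactification $\eta\omega$ whose remainder $\eta\omega\sm\omega$ is homeomorphic to $L=\gamma\omega\sm\omega$. By construction $\eta\omega$ is smooth while $\gamma\omega$ is not, and the two compactifications have homeomorphic remainders, completing the proof.

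The only step that requires any justification beyond citing earlier results is the zerodimensionality of the remainder produced by Theorem \ref{7:1}; everything else is a direct concatenation. I expect this point to be immediate from the construction in Section \ref{sec:7}, since the paper systematically identifies compactifications of $\omega$ with Stone spaces of Boolean subalgebras of $P(\omega)$ containing $\fin$. If for some reason the compactification in Theorem \ref{7:1} were produced in a non-zerodimensional form, the same argument would still work provided one can pass to a zerodimensional refinement with separable remainder, but this contingency should not arise.
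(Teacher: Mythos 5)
Your proposal is correct and coincides with the paper's own proof: take the non-smooth zerodimensional compactification of Theorem \ref{7:1}, whose remainder $L$ is separable, and apply Theorem \ref{4:3} to $L$ to obtain a smooth compactification with homeomorphic remainder. The zerodimensionality point you flag is indeed immediate, since the construction in Section \ref{sec:7} produces $\gamma\omega$ as the Stone space $K_\fA$ of a subalgebra $\fA\sub P(\omega)$ containing $\fin$.
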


\begin{proof}
Take $\gamma\omega$ as in Theorem \ref{7:1}, that is a non-smooth zerodimensional compactification with $L=\growth$ separable.
 By Theorem \ref{4:3} there is a smooth compactification $\eta\omega$ such that $\eta\omega\sm\omega\cong L\cong \gamma\omega\sm \omega$.
\end{proof}

We finish the section by the following side remark. If $\fin\sub\fA\sub P(\omega)$ is an algebra
and the quotient map $\fA\to\fA/\fin $ admits a lifting then the algebra $\fA/\fin$ is $\sigma$-centred.
Note the reverse implication does not hold:
If we  take $\fA$ as in Corollary \ref{4:4} then $K_\fA$ is not smooth so $\fA$ does not have a lifting by Theorem \ref{4:2}.
 On the other hand, $\fA/\fin$ is $\sigma$-centred since it is isomorphic to the clopen algebra of a separable space $K_\fA^*$.

\section{The Measure algebra}\label{sec:5}

We start this section by the following observation due to W.\ Kubi\'s.

\begin{thm}\label{5:0}
If $\gamma\omega$ is a smooth compactification then its remainder carries a strictly positive measure.
\end{thm}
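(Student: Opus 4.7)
The plan is to extract a strictly positive Radon measure on $\growth$ directly from the sequence $(\nu_n)$ supplied by Corollary~\ref{2:3}. Because $|\nu_n|(\omega)=0$ for every $n$, each $|\nu_n|$ is a nonnegative Radon measure on $\gamma\omega$ concentrated on the remainder, and $\sup_n\|\nu_n\|<\infty$ since $(\nu_n)$ is bounded. I would therefore set
\[\lambda=\sum_{n\in\omega}2^{-n}|\nu_n|,\]
which is a finite nonnegative Radon measure supported on $\growth$. The only thing left to check is that $\lambda$ is strictly positive on $\growth$.

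To see this, fix a nonempty open set $V\subseteq\growth$, write $V=W\cap\growth$ with $W\subseteq\gamma\omega$ open, and pick $x\in V$. Using Urysohn I would select $f\in C(\gamma\omega)$ with $0\le f\le 1$, $f(x)=1$ and $f\equiv 0$ off $W$, and consider $U=\{f>1/2\}$, an open neighbourhood of $x\in\growth$. Since $\omega$ is dense in $\gamma\omega$, no neighbourhood of a remainder point can meet $\omega$ in a finite set, so $U\cap\omega$ is infinite; enumerate it as $n_1<n_2<\cdots$. The $weak^*$ condition $\nu_n-\delta_n\to 0$, applied to this single function $f$ along the subsequence $n_k\to\infty$, gives $\nu_{n_k}(f)-f(n_k)\to 0$, and since $f(n_k)>1/2$ for every $k$, we obtain $\nu_{n_k}(f)>1/4$ for all sufficiently large $k$. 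Combining this with
\[\nu_{n_k}(f)\le |\nu_{n_k}|(f)\le |\nu_{n_k}|(W)=|\nu_{n_k}|(V),\]
where the last equality uses that $|\nu_{n_k}|$ lives on $\growth$, yields $\lambda(V)\ge 2^{-n_k}|\nu_{n_k}|(V)>2^{-n_k}/4>0$.

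The obstacle I anticipate is a ``variable test function'' trap: one cannot apply the $weak^*$ convergence one $n$ at a time with a fresh Urysohn bump, because $\nu_n-\delta_n\to 0$ only delivers convergence against a \emph{fixed} $g\in C(\gamma\omega)$. The argument above circumvents this by committing at the outset to a single continuous function $f$ whose superlevel set $\{f>1/2\}$ is, by density of $\omega$ in $\gamma\omega$, automatically forced to contain arbitrarily large integers. The density of $\omega$ in $\gamma\omega$ --- rather than any finer structure of the remainder --- is the decisive geometric input.
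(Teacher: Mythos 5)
Your proof is correct and follows essentially the same route as the paper's: both form the measure $\sum_n 2^{-n}|\nu_n|$ from the sequence given by Corollary~\ref{2:3} and test positivity on an open set via a single Urysohn function whose superlevel set $\{f>1/2\}$ must contain infinitely many integers by density of $\omega$, so that the weak$^*$ convergence $\nu_n(f)-f(n)\to 0$ forces $\nu_n(f)$ to be bounded away from $0$ for some $n$. The point you flag about fixing one test function rather than varying it with $n$ is exactly the (implicit) mechanism in the paper's argument as well.
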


\begin{proof}
Take a sequence $(\nu_n)_n$ as in  Lemma \ref{2:3} and consider $\mu=\sum_n 2^{-n}|\nu_n|$. Then $\mu$ is a finite nonnegative measure
on $\gamma\omega$ and $\mu(\omega)=0$. Let $U\sub\gamma\omega$ be an open set such that $U\cap(\growth)\neq\emptyset$.
Take a continuous function $g\colon \gamma\omega\to [0,1]$ that vanishes outside $U$ and $g(x_0)=1$ for some $x_0\in U$.
Then the set $V=\{g>1/2\}$ contains infinitely many $n\in\omega$. Since $\nu_n(g)-g(n)\to 0$ we conclude that $\nu_n(g)>0$ for some $n$ and this gives
\[\mu(U)\ge\mu(g)\ge 2^{-n}|\nu_n|(g)>0,\]
so the measure $\mu$ is positive on every nonempty open subset of $\growth$.
\end{proof}

We  show in this section that under CH there are non-smooth compactifications $\gamma\omega$ such that $\growth$
carries a strictly positive nonatomic measure.

We consider here the measure algebra $\fM$, that is the quotient $Bor[0,1]/\cN$, where
$\cN$ is the ideal of Lebesgue null sets. We denote by $\lambda$ the measure on $\fM$ defined from the Lebesgue measure.
We write $S=\ult (\fM)$ for its Stone space.
It is well-known that the space $C(S)$ is isometric to $L_\infty[0,1]$.

By the classical Parovicenko theorem, under CH there is an isomorphic embedding   $\vf\colon \fM \to P(\omega)/\fin$.
Define an algebra $\fA\subseteq P(\omega)$ as the algebra of all
  finite modifications of elements of $\vf [\fM]$  and consider the
  Stone space $K_\fA$. Then $K_\fA$ is a compactification of $\omega$ such that $K_\fA^*$ is homeomorphic to $S$.

We shall  investigate the space $K_\fA$ in Theorem \ref{5:3}. Note first that $C(K_\fA^*)=C(S)$ is a Grothendieck space so it contains  no complemented copy of
$c_0$.

\begin{lemma}\label{5:1}
For every family $\{A_n\colon n\in\omega\}\sub\fA$ there is $B\in\fA$
which is an almost upper bound of that family, in the sense that
 $A_n\sub^* B$ for every $n$ and $B\cap A$ is
finite whenever $A\in\fA$ is such that $A\cap A_n=\emptyset$ for every $n$.
\end{lemma}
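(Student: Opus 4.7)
The plan is to push the problem from $\fA$ into the measure algebra $\fM$ via the Parovicenko embedding $\varphi$, where we have the extra power of $\sigma$-completeness, and then pull the resulting supremum back to a set in $\fA$. By construction of $\fA$, the quotient $\fA/\fin$ is isomorphic (via the map $A^\bullet\mapsto \varphi^{-1}(A^\bullet)$) to $\fM$, so for each $n$ there is a unique $a_n\in\fM$ with $\varphi(a_n)=A_n^\bullet$.

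The key step is to set $a=\bigvee_{n\in\omega} a_n$, which exists in $\fM$ because the measure algebra is $\sigma$-complete (indeed complete). Then I would choose any $B\in\fA$ whose equivalence class satisfies $B^\bullet=\varphi(a)$; such $B$ exists precisely because $\fA$ was defined as the collection of all representatives in $P(\omega)$ of elements of $\varphi[\fM]$. This $B$ is the candidate almost upper bound.

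To verify the two conditions, I would argue as follows. For each $n$, from $a_n\leq a$ in $\fM$ one gets $\varphi(a_n)\leq \varphi(a)$ in $P(\omega)/\fin$, hence $A_n^\bullet\leq B^\bullet$, which is precisely $A_n\subseteq^* B$. For the second condition, take $A\in\fA$ with $A\cap A_n=\emptyset$ for every $n$, and let $a'\in\fM$ with $\varphi(a')=A^\bullet$. Then $\varphi(a')\wedge\varphi(a_n)=0$ in $P(\omega)/\fin$, so $a'\wedge a_n=0_\fM$, i.e.\ $a_n\leq \neg a'$ for every $n$; taking the supremum (this is the standard fact that in any Boolean algebra an upper bound of all the $a_n$ bounds their supremum if it exists) yields $a\leq \neg a'$, hence $\varphi(a)\wedge\varphi(a')=0$, which translates to $A\cap B\in\fin$.

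There is no real obstacle here: the whole argument rides on two well-known facts, namely the $\sigma$-completeness of the measure algebra and the injectivity of the Parovicenko embedding, both of which are part of the setup of Section \ref{sec:5}. The only care needed is distinguishing the set-theoretic condition $A\cap A_n=\emptyset$ (which is stronger than the corresponding relation in $P(\omega)/\fin$) from the conclusion $A\cap B\in\fin$, but this asymmetry is exactly what makes the statement workable: disjointness in $P(\omega)$ implies disjointness in the quotient, and disjointness in the quotient is all one needs in $\fM$.
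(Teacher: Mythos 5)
Your proposal is correct and follows essentially the same route as the paper: lift the $A_n^\bullet$ to elements $a_n$ of $\fM$ via $\varphi$, take their supremum $b=\bigvee_n a_n$ using the completeness of the measure algebra, and pull back to $B\in\fA$ with $B^\bullet=\varphi(b)$. You simply spell out the verification (that $a_n\leq b$ gives $A_n\subseteq^* B$, and that an upper bound $\neg a'$ of all the $a_n$ dominates the least upper bound $b$) which the paper leaves implicit in the phrase ``is as required.''
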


\begin{proof}
For every $n$ take $a_n\in\fM$ such that $\vf(a_n)=A_n^\bullet$. The algebra $\fM$ is complete so
the family $\{a_n\colon n\in\omega\}$ has the least upper bound $b\in\fM$. Now $B\in\fA$ such that
$B^\bullet=\vf(b)$ is as required.
\end{proof}

\begin{thm}\label{5:2}
Let $\fin\sub \fA\sub P(\omega)$ be an algebra such that $K_\fA^*\cong S$.  Then
$K_\fA$ is a compactification of $\omega$ that is not smooth.
\end{thm}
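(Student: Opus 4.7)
I would argue by contradiction. If $K_\fA$ were smooth, Lemma \ref{3:1} would yield a bounded sequence $(\nu_n)_n \subseteq \ba(\fA)$ with $\sup_n \|\nu_n\| \le M$, satisfying $\nu_n|_\fin \equiv 0$ and $\nu_n(A) - \chi_A(n) \to 0$ for every $A \in \fA$. Since each $\nu_n$ vanishes on finite sets, it factors through the quotient $\fA/\fin \cong \fM$, and I view it equivalently as a bounded signed Radon measure $\widetilde\nu_n$ on $S = \ult(\fM)$. The plan is to derive a contradiction by a Phillips-style sliding-hump argument carried out inside $\fA$: Phillips's classical proof that $c_0$ is uncomplemented in $\ell_\infty$ leans decisively on the $\sigma$-completeness of $P(\omega)$, and here the substitute is Lemma \ref{5:1}, the analogous ``almost upper bound'' property of $\fA$ that it inherits from the $\sigma$-completeness of $\fM$.

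I would first inductively build an increasing sequence $n_1 < n_2 < \cdots$ and pairwise disjoint infinite sets $A_1, A_2, \ldots \in \fA$ with $n_k \in A_k$, $|\nu_{n_k}(A_k) - 1| < 2^{-k}$, and $|\nu_{n_k}(A_j)| < 2^{-k}$ for $j < k$. At stage $k$ the past-term bound is secured by choosing $n_k$ large enough, using $\nu_n(A_j) \to \chi_{A_j}(n)$ for fixed $A_j$; then $A_k \in \fA$ is carved out of $\omega \setminus \bigcup_{j<k} A_j$ around $n_k$ using finite additivity together with $\nu_{n_k}(\omega) \to 1$. I would then invoke Rosenthal's disjointification lemma, applied to the bounded positive measures $|\nu_{n_k}|$ and the disjoint family $(A_j)$, to pass to an infinite $N \subseteq \omega$ with $|\nu_{n_k}|\bigl( \bigcup_{j \in N,\, j \neq k} A_j \bigr) < \eps$ for every $k \in N$ and any prescribed $\eps > 0$.

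Next, by Lemma \ref{5:1} applied to $\{A_j : j \in N\}$, I would obtain $B \in \fA$ that is an almost upper bound and, crucially, satisfies the minimality property: $B \cap C$ is finite whenever $C \in \fA$ is disjoint from all $A_j$, $j \in N$. The convergence $\nu_{n_k}(B) - \chi_B(n_k) \to 0$ forces $\widetilde\nu_{n_k}(\hat B) \to 1$ (since $n_k \in A_k \subseteq^* B$). Decomposing $\hat B = \bigsqcup_{j \in N} \hat A_j \sqcup \partial$ in the Borel structure of $S$, the first part contributes $\widetilde\nu_{n_k}(\hat A_k) + \sum_{j \neq k} \widetilde\nu_{n_k}(\hat A_j) \approx 1$ by the sliding-hump data and Rosenthal's estimate, while the residual $\partial$ is the ``boundary'' of $\hat B$ that need not correspond to any element of $\fA$. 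To produce the final inconsistency, I would split $N = N_0 \sqcup N_1$ into two infinite halves and repeat the construction with each, obtaining $B_0, B_1 \in \fA$; the minimality clause forces $B_0, B_1$ to have negligible overlap away from the respective $A_j$'s, and the combined convergence requirements on $\nu_{n_k}(B_0)$, $\nu_{n_k}(B_1)$, and $\nu_{n_k}(B_0 \cup B_1)$ (all in $\fA$), when matched against the sliding-hump estimates, are incompatible.

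The \textbf{main obstacle} is controlling the residual boundary mass $\widetilde\nu_{n_k}(\partial)$, because $\partial$ is not itself in $\fA$ and Lemma \ref{5:1} delivers only set-theoretic information about how $B$ interacts with the $A_j$'s. The way forward is to combine the Radon regularity of $\widetilde\nu_{n_k}$ with the nonatomicity of $\fM$ (Lemma \ref{3:4}) to force the boundary mass to become negligible in the limit, so that the sliding-hump estimates and the smoothness convergence are pushed into outright conflict.
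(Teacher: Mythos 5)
Your overall architecture (a sliding hump followed by Lemma \ref{5:1} to produce an almost upper bound $B\in\fA$ on which the required convergence fails) matches the paper's, but two essential ingredients are missing, and without them the contradiction never materializes. First, with full-mass humps $\nu_{n_k}(A_k)\approx 1$ there is simply no inconsistency to be had: the almost upper bound $B$ satisfies $A_k\sub^* B$, so nothing prevents $n_k\in B$ for all large $k\in N$, in which case $\nu_{n_k}(B)\approx 1=\delta_{n_k}(B)$ is perfectly compatible with smoothness; your $N_0\sqcup N_1$ splitting does not repair this, since for $k\in N_1$ the set $B_0\cap A_k$ is finite and $n_k$ may simply fail to lie in $B_0$, again matching $\nu_{n_k}(B_0)\approx 0$. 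The paper's decisive move is different: it first uses the \emph{nonseparability} of $S$ to find an infinite $T\in\fA$ on which every $\nu_n$ is nonatomic (each $\wh{\nu_n}$ has only countably many atoms, so some nonzero $a\in\fM$ avoids them all), and then, by Lemma \ref{3:4}, replaces each hump $A_k$ by a \emph{half-mass} subset $B_k\in\fA$ with $\big|\nu_{n_k}(B_k)-\frac 12\big|<\eps$. The almost upper bound $B$ of $\{B_k\}$ then has $\nu_{n_k}(B)$ pinned near $1/2$, which is incompatible with $\nu_{n_k}(B)-\delta_{n_k}(B)\to 0$ no matter which $n_k$ happen to lie in $B$. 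You never put nonatomicity (or the nonseparability of $S$) to work in this way.

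Second, the boundary obstacle you flag is real, and your proposed fix does not close it. Rosenthal's lemma controls $|\nu_{n_k}|$ on $\bigcup_{j\neq k}A_j$, but says nothing about $B\sm\bigcup_{j\in N}A_j$; in $S$ this corresponds to the closed nowhere dense set $\wh{b}\sm\bigcup_j\wh{a_j}$ (where $b=\sup_j a_j$), and a Radon measure on $S$ can perfectly well charge such a set --- neither regularity nor the nonatomicity of $\fM$ rules this out, and the $\nu_n$ themselves need not be nonatomic before the passage to $T$. The paper handles this structurally rather than measure-theoretically: the humps are built inside a decreasing chain of reservoirs $D_k\in\fA$ with $A_{k+1}\sub D_k$ and $|\nu_{n_k}|(D_k)<\eps$, and the almost-upper-bound property then forces $B\sm B_{\le k}\sub^* D_k$ (the set $B\sm(B_{\le k}\cup D_k)$ lies in $\fA$ and meets no $B_j$, hence is finite), so the entire residual --- future humps and boundary alike --- is absorbed into a set of small $|\nu_{n_k}|$-variation. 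You would need to add both the nested $D_k$'s and the half-mass sets to turn your outline into a proof.
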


\begin{proof}
  By Lemma \ref{3:1} it is enough to  check that whenever $(\nu_n)_{n}$
is a   bounded sequence, where every $\nu_n\in\ba(\fA)$ vanishes on finite sets,
then $\nu_n-\delta_n$ does not converge to $0$.
Suppose otherwise; let $\nu_n(A) - \delta_n(A) \to 0$ for every $A\in\fA$.

Note first that there is an infinite $T\in\fA$ such that every $\nu_n$ is nonatomic on
the algebra $\fA_T=\{A\in\fA\colon  A\sub T\}$ of subsets of $T$. Indeed, every $\wh{\nu_n}$,
as a Radon measure on $K_\fA$ is concentrated on $S$ and the set $\{x\in S\colon  \wh{\nu_n}(\{x\})\neq 0\}$
is at most countable (since $|\wh{\nu_n}|(S)$ is finite; see Lemma \ref{3:4}). The space $S$ is not separable and therefore
there is a nonzero $a\in\fM$ such that $\wh{\nu_n}(\{x\})=0$ for every $n$ and every $x\in \wh{a}$.
Take $T\in\fA$ such that $T^\bullet =a$; then $T$ is as required.

Fix $\varepsilon =  1 / 8$.
  Take any  $A_1\in\fA$ such that $A_1\sub T$ and $A_1,T\sm A_1$ are infinite.
  Since for every $n\in A_1$ we have $\delta_n(A_1)=1$, so
    $\lim_{n\in A_1} \nu_n(A_1) = 1$.  Hence there exists $n_1 \in A_1$
    such that $\nu_{n_1}(A_1) \ge 1-\varepsilon$.
  Moreover, since the variation of $\nu_{n_1}$ is finite, there exists infinite $D_1\in\fA$ such that
  $D_1 \subseteq T\sm A_1$ and $|\nu_{n_1}|(D_1) < \eps$.

  In a similar way for every $k>1$ there exist infinite sets $A_k,D_k\in\fA$ and $n_1<n_2<\ldots$ such that

  \begin{enumerate}[(a)]
  \item $A_k \subseteq D_{k-1}$ and $D_k \subseteq D_{k-1}\sm A_k$;
  \item   $D_{k-1}\sm A_k$ is infinite;
  \item $n_k \in A_k$;
  \item  $\nu_{n_k}(A_k) \geq 1 - \eps$ and $|\nu_{n_k}|(D_k) < \eps$.
\end{enumerate}

  Since all  the measures $\nu_n$ are nonatomic on $T$, by Lemma \ref{3:4} for every $k\in \omega$
    there exists a set $B_k\in\fA$ such that $B_k \subseteq A_k$ and  $\big|\nu_{n_k}(B_k) - \frac 12\big| < \eps$.

As $A_k$ are pairwise disjoint and $n_k\in A_k$, it follows from $\nu_n-\delta_n\to 0$ that
there is an infinite $N\sub\omega$ such that for every $k\in N$
\begin{equation}\label{5:eq}
 \left| \nu_{n_k}\left(\bigcup_{j<k, j\in N} B_{j}\right)\right|<\eps.
 \end{equation}

Let $B\in\fA$ be an almost upper bound for $\{B_k: k\in N\}$ as in Lemma \ref{5:1}. Write, for simplicity,
$B_{<k}=\bigcup_{j<k, j\in N}B_j$, and let $B_{\le k}$ be defined accordingly.
For any $k\in N$ we have
  \[\nu_{n_k}(B)= \nu_{n_k}(B_{<k})+\nu_{n_k} (B_k)+  \nu_{n_k}(B\sm B_{\le k}),\]
  where
  $B\sm B_{\le k}  \sub^* D_k$.
  Using \ref{5:eq},  condition (d)  and the fact that  $\nu_{n_k}$ vanishes on finite sets we get
  \[ \big|\nu_{n_k}(B) - \frac 12\big| < 3\eps, \quad\mbox{so } 1/8< \nu_{n_k}(B)< 7/8,\]
for every $k\in N$ (note  that $1/2+3\eps=1/2+3/8=7/8$).

On the other hand,   consider the set $J = \{k\in N\colon n_k \in B\}$.
  If $J$ is infinite then $\nu_{n_k}(B) \to 1$ for $k \in J$.
  If $N\sm J$ is infinite we should have $\nu_{n_k}(B) \to 0$ for $k\in N\sm J$, and in both cases this is a contradiction.
\end{proof}

We shall now prove that for $\fA$ as in Theorem \ref{5:2} the space $C(K_\fA)$ need not to be Grothendieck.
  Consider the family $\fD$ of all subsets $A\subseteq\omega$ having the asymptotic density
  \[ d(A)= \lim_n |A\cap n| / n.\]
  Observe that the asymptotic density does not depend on finite
  modifications of the set $A\subseteq \fD$, so we can also treat $d$ as the
  function defined on $\fD/\fin$.

  In the proof of Theorem \ref{5:5} we shall use the following result of Frankiewicz and Gutek and a simple remark \ref{5:4}.

\begin{thm}[\cite{FG81}]\label{5:3}
  Assuming CH there exists an embedding $\vf\colon \fM \to P(\omega)/\fin$
    such that  $\vf(a) \in \fD/\fin$ and  $d(\vf(a)) = \lambda(a)$ for every $a\in\fM$.
\end{thm}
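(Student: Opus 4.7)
The plan is to proceed by transfinite induction under CH, building $\vf$ as the union of density-preserving Boolean embeddings along a chain of countable subalgebras of $\fM$. By CH enumerate $\fM=\{a_\alpha:\alpha<\omega_1\}$ and construct recursively a continuous increasing chain of countable subalgebras $\fB_\alpha\sub\fM$ with $a_\xi\in\fB_\alpha$ for $\xi\le\alpha$, together with Boolean embeddings $\vf_\alpha\colon\fB_\alpha\to P(\omega)/\fin$ satisfying $\vf_\alpha(b)\in\fD/\fin$ and $d(\vf_\alpha(b))=\lambda(b)$, with $\vf_\beta$ extending $\vf_\alpha$ for $\alpha\le\beta$. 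Limit stages pose no difficulty since every limit ordinal below $\omega_1$ has countable cofinality. The work is concentrated in the successor step, which reduces to a one-element extension lemma: given a countable subalgebra $\fB\sub\fM$, a density-preserving embedding $\vf\colon\fB\to\fD/\fin$, and $a\in\fM$, find $X\sub\omega$ satisfying (1) $X\cap\vf(b)=^*\emptyset$ whenever $b\le\neg a$ in $\fB$, (2) $\vf(b)\sub^* X$ whenever $b\le a$ in $\fB$, and (3) $d(X\cap\vf(b))=\lambda(a\wedge b)$ for every $b\in\fB$. Conditions (1) and (2) are of Parovicenko type and guarantee that the assignment $a\mapsto X^\bullet$ extends $\vf$ to a Boolean embedding of the algebra $\fB[a]$, while (3) forces the extension to remain density-preserving.

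To produce such an $X$, enumerate $\fB=\{b_0,b_1,\ldots\}$, let $\fB_n=\alg(b_0,\ldots,b_{n-1})$ with atoms $c_1^n,\ldots,c_{k_n}^n$, and pick representatives $Y_i^n\sub\omega$ of $\vf(c_i^n)$ forming a genuine partition of $\omega$ that refines as $n$ grows. Since each $Y_i^n$ has density $\lambda(c_i^n)>0$, one can choose integers $N_0<N_1<\cdots$ growing fast enough that $\bigl||Y_i^n\cap(N_{m-1},N_m]|-\lambda(c_i^n)(N_m-N_{m-1})\bigr|\le 2^{-m}N_m$ for all $i\le k_n$ and $m\ge n$. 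Within each block $(N_{n-1},N_n]$ and each atom $c_i^n$ of $\fB_n$, include in $X$ the first $\lfloor r_i^n\cdot|Y_i^n\cap(N_{n-1},N_n]|\rfloor$ members of $Y_i^n\cap(N_{n-1},N_n]$, where $r_i^n=\lambda(a\wedge c_i^n)/\lambda(c_i^n)\in[0,1]$ (well-defined since $\lambda$ is strictly positive on $\fM\sm\{0\}$). If $a\wedge c_i^n=0$ then $a\wedge c_j^m=0$, hence $r_j^m=0$, for every sub-atom $c_j^m\sub c_i^n$ with $m\ge n$, so no further elements of $Y_i^n$ enter $X$ past $N_{n-1}$; this gives (1). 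Dually, $r_i^n=1$ whenever $c_i^n\le a$, giving (2). Condition (3) follows from the identity $\sum_{j:c_j^m\le c_i^n}r_j^m\lambda(c_j^m)=\lambda(a\wedge c_i^n)$: the block contributions yield $|X\cap Y_i^n\cap[0,N_M]|/N_M\to\lambda(a\wedge c_i^n)$, and additivity over the finitely many atoms below each $b\in\fB$ delivers (3) in general.

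The main obstacle is the simultaneous choice of the block sizes $N_n$: the approximations must converge at every level of refinement at once. A diagonal scheme with tolerances $2^{-n}$ suffices, since rounding losses and block-density fluctuations together contribute only $o(N_M)$ to the counts $|X\cap Y_i^n\cap[0,N_M]|$. Once the extension lemma is established, the transfinite chain goes through routinely and produces the desired embedding $\vf\colon\fM\to P(\omega)/\fin$ with $\vf(a)\in\fD/\fin$ and $d(\vf(a))=\lambda(a)$ for every $a\in\fM$.
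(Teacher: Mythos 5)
First, a point of comparison: the paper offers no proof of this statement --- it is quoted from Frankiewicz and Gutek \cite{FG81} --- so there is no internal argument to measure yours against. Your overall strategy (a CH-indexed transfinite recursion through countable subalgebras of $\fM$, reduced to a one-element, density-preserving extension lemma of Parovicenko type) is the natural route and surely the one intended by the citation, and your reduction is sound: conditions (1)--(3) do yield an injective, density-preserving extension to $\fB[a]$, since injectivity follows from strict positivity of $\lambda$ via (3), and the density of a general element $(\vf(b)\cap X)\cup(\vf(b')\sm X)$ of the extended algebra follows from (3) by additivity of densities.

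There is, however, a genuine gap in your construction of $X$. You place into $X$ the \emph{first} $\lfloor r_i^n\cdot|Y_i^n\cap(N_{n-1},N_n]|\rfloor$ members of $Y_i^n\cap(N_{n-1},N_n]$. Asymptotic density requires convergence of $|X\cap[0,N]|/N$ along \emph{all} $N$, and your blocks must be long relative to their left endpoints, since the estimate $\bigl||Y_i^n\cap(N_{m-1},N_m]|-\lambda(c_i^n)(N_m-N_{m-1})\bigr|\le 2^{-m}N_m$ is unattainable for short blocks (take $Y$ the even numbers and a block of bounded length). Front-loading inside a long block then destroys the density: already with a single atom and $r=1/2$, at the block midpoint $N^*=(N_{m-1}+N_m)/2$ one gets $|X\cap[0,N^*]|/N^*\approx N_m/(N_{m-1}+N_m)$, which is near $1$ rather than $1/2$ when $N_m\gg N_{m-1}$, so $d(X)$ fails to exist. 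The repair is standard but is exactly the missing idea: within each block select the elements of $Y_j^m$ of rank $t$ with $\lfloor t\,r_j^m\rfloor>\lfloor(t-1)r_j^m\rfloor$, so the selection has relative density $r_j^m$ on \emph{every} initial segment of the block, and choose $N_{m-1}$ so large that $\bigl||Y_j^{m'}\cap[0,N]|-\lambda(c_j^{m'})N\bigr|\le\eps_m N$ holds uniformly for all $N\ge N_{m-1}$ and all $m'\le m$. Relatedly, your tolerance $2^{-m}N_m$ \emph{per atom} is too loose: $\fB_m$ may have $k_m\approx 2^m$ atoms, so the aggregated block error $k_m2^{-m}N_m$ need not be $o(N_m)$; the tolerance must be divided by $k_m$ (or chosen after $k_m$ is known). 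With these two corrections your extension lemma, and hence the theorem, goes through.
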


\begin{lemma}\label{5:4}
  There exists a family $\{I_n\}_{n\in\omega}$ of pairwise disjoint finite
  subsets of $\omega$ such that $d(A) = \lim_n\frac{|A\cap I_n|}{|I_n|}$
  for any $A\in\fD$.
\end{lemma}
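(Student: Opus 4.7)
The plan is to take an exponentially growing partition of $\omega$ into consecutive intervals and verify the density claim by expressing the ratios on these intervals in terms of two subsequences of the usual density quotients.

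Concretely, I would set $I_n = [2^n,2^{n+1})$ for $n\in\omega$, so that $|I_n|=2^n$ and the $I_n$ are pairwise disjoint finite sets whose union covers $\omega\setminus\{0\}$ (adjust the first block if needed). For any $A\in\fD$ with $d(A)=\alpha$, I would write
\[
\frac{|A\cap I_n|}{|I_n|}=\frac{|A\cap 2^{n+1}|-|A\cap 2^n|}{2^n}
=2\cdot\frac{|A\cap 2^{n+1}|}{2^{n+1}}-\frac{|A\cap 2^n|}{2^n}.
\]
Both $|A\cap 2^{n+1}|/2^{n+1}$ and $|A\cap 2^n|/2^n$ are subsequences of $|A\cap k|/k$, hence each converges to $\alpha$. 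Passing to the limit gives $2\alpha-\alpha=\alpha=d(A)$, as required.

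The only thing to check at this step is that the telescoping identity is valid, which is immediate from $|A\cap I_n|=|A\cap 2^{n+1}|-|A\cap 2^n|$. There is no real obstacle: the key observation is that $|I_n|$ is comparable to $2^{n+1}$ (in fact $|I_n|=2^n=2^{n+1}-2^n$), so both fractions $2^{n+1}/|I_n|$ and $2^n/|I_n|$ are bounded; this boundedness is what allows the error terms $|A\cap 2^{n\pm 1}|/2^{n\pm 1}-\alpha$ to be absorbed into a single $o(1)$ when multiplied by the ratios. Any partition with $a_{n+1}/a_n$ bounded away from $1$ (e.g.\ $I_n=[c^n,c^{n+1})$ for some $c>1$) would work just as well.
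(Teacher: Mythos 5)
Your proof is correct, and it follows the same basic strategy as the paper: partition $\omega$ into consecutive intervals with rapidly growing endpoints and use the telescoping identity $|A\cap I_n|=|A\cap k_{n+1}|-|A\cap k_n|$. The one substantive difference is the growth condition. The paper takes $I_n=[k_n,k_{n+1})$ with $k_n/k_{n+1}\to 0$, which makes the verification trivial because then $|I_n|\sim k_{n+1}$ and $|A\cap k_n|\le k_n=o(k_{n+1})$, so $|A\cap I_n|/|I_n|$ is asymptotic to $|A\cap k_{n+1}|/k_{n+1}\to d(A)$ with no cancellation needed. Your dyadic blocks do \emph{not} satisfy that hypothesis (the ratio is constantly $1/2$), so you need the extra observation that $|A\cap I_n|/|I_n| = 2\,\frac{|A\cap 2^{n+1}|}{2^{n+1}} - \frac{|A\cap 2^n|}{2^n}$ is a fixed linear combination of two convergent subsequences of the density quotients; that identity is valid and the limit $2\alpha-\alpha=\alpha$ is correct. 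So your argument is marginally sharper (geometric growth suffices, not just super-geometric) at the cost of a slightly less immediate computation; either choice proves the lemma.
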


\begin{proof}
Take any increasing sequence of integers $k_n$ such that $\lim_n k_n/k_{n+1}=0$ and put
$I_n=\{i\in\omega: k_n\le i< k_{n+1}\}$. Then $I_n$ are as required by standard calculations.
\end{proof}

\begin{thm}\label{5:5}
Under CH there is a compactification $\gamma\omega$ such that $\growth\cong S$ and
$C(\gamma\omega)$ contains a complemented copy of $c_0$.
\end{thm}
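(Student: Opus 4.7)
The plan is to use the Frankiewicz--Gutek embedding from Theorem \ref{5:3} to construct the desired compactification, and then exhibit a non-natural complemented copy of $c_0$ inside $C(\gamma\omega)$ built from the partition given by Lemma \ref{5:4}. Concretely, under CH I fix an embedding $\vf\colon\fM\to P(\omega)/\fin$ with $\vf(a)\in\fD/\fin$ and $d(\vf(a))=\lambda(a)$, let $\fA$ be the algebra of all finite modifications of representatives of $\vf[\fM]$, and set $\gamma\omega=K_\fA$. Then $\growth\cong\ult(\fA/\fin)\cong\ult(\fM)=S$.

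Next, fix a pairwise disjoint family $\{I_n\colon n\in\omega\}$ of finite subsets of $\omega$ as in Lemma \ref{5:4}, so that $d(A)=\lim_n |A\cap I_n|/|I_n|$ for every $A\in\fD$. Each $I_n$ is clopen in $K_\fA$ (it is a finite subset of the open discrete set $\omega$), hence $\vf_n:=\chi_{I_n}$ lies in $C(K_\fA)$. Since the $I_n$ are disjoint, $T\colon c_0\to C(K_\fA)$ defined by $Te_n=\vf_n$ is an isometric embedding. The plan is to verify condition (ii) of Lemma \ref{2:1} with the measures
\[ \mu_n=\frac{1}{|I_n|}\sum_{k\in I_n}\delta_k,\qquad \nu_n=\mu^\infty\text{ for every }n,\]
where $\mu^\infty\in\cM(K_\fA)$ is the Radon measure on $\growth\cong S$ induced by $\lambda$ on $\fM$ via the Stone correspondence, extended by zero to $\omega$.

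The conditions to check are straightforward: $\mu_n(\vf_k)=|I_n\cap I_k|/|I_n|=\delta(n,k)$, and $\nu_n(\vf_k)=\mu^\infty(\chi_{I_k})=0$ because $\mu^\infty$ is concentrated on the remainder while $I_k\sub\omega$. The only non-trivial step is the weak$^*$ convergence $\mu_n-\mu^\infty\to 0$ in $\cM(K_\fA)$. Since $K_\fA$ is zerodimensional and the sequence $(\mu_n)_n$ is norm-bounded by $1$, it suffices to check that $\mu_n(A)\to\mu^\infty(A)$ for every $A\in\fA$. For such $A$ with $A^\bullet=\vf(a)$ we have $\mu^\infty(A)=\lambda(a)=d(\vf(a))=d(A)$, and Lemma \ref{5:4} gives $\mu_n(A)=|A\cap I_n|/|I_n|\to d(A)$; modifications by finite sets are absorbed since $\mu_n$ of a finite set tends to $0$ and $\mu^\infty$ vanishes on $\omega$.

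The main obstacle, and the point where CH is used, is producing the embedding $\vf$ that both realises the measure algebra inside $P(\omega)/\fin$ and carries $\lambda$ to the asymptotic density; this is precisely the content of Theorem \ref{5:3}. Once $\vf$ is available, the verification above is routine. By Lemma \ref{2:1}, $T[c_0]$ is then a complemented copy of $c_0$ in $C(K_\fA)$, so $C(\gamma\omega)$ is not Grothendieck by Theorem \ref{2:4}, even though by Theorem \ref{5:2} the natural copy of $c_0$ is not complemented.
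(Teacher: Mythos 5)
Your proposal is correct and follows essentially the same route as the paper: the same Frankiewicz--Gutek algebra $\fA$, the same disjoint intervals $I_n$ from Lemma \ref{5:4}, the functions $\chi_{I_n}$ spanning the copy of $c_0$, and the averaging measures $\mu_n=\frac{1}{|I_n|}\sum_{i\in I_n}\delta_i$ converging to the density measure on $\fA$, with Lemma \ref{2:1} finishing the argument. The only difference is that you spell out explicitly that the $\nu_n$ of Lemma \ref{2:1} should all equal the Radon measure on the remainder induced by $\lambda$, which the paper leaves implicit.
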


\begin{proof}
  We use an embedding $\vf\colon \fM\to P(\omega)/\fin$ as in  \ref{5:3} and consider $\fA$ as in the beginning of this section and
  in Theorem \ref{5:2}.
Take  a family of pairwise disjoint intervals $I_n$ as in Lemma \ref{5:4}.
  For every  $n\in\omega$ let $f_n = \chi_{I_n} \in C(K_\fA)$ be  the characteristic function of the set $I_n$.

  Let us consider the space
    $Y = \overline{\operatorname{lin}\la f_n\colon n\in\omega\ra}$.
  Then $Y$ is a closed subspace of $C(K_\fA)$ isomorphic to $c_0$; the isomorphism
 between them  is defined by setting $e_n \mapsto f_n$.

  Now $Y$ is complemented in $C(K_\fA)$; indeed,
  consider the measures
  \[\mu_n = \frac 1{|I_n|} \sum_{i\in I_n} \delta_i\in\ba_+(\fA).\]
  Then $\mu_n(f_k)$ equals 1 if $k=n$ and 0 if $n\neq k$ because $I_n$ are pairwise disjoint.
  Moreover, by Lemma \ref{5:4},  $\mu_n\to d$ on the algebra $\fA$ so we can apply Lemma \ref{2:1}.
\end{proof}

\section{Large smooth compactification}\label{sec:6}

\begin{thm}\label{6:1}
Under  CH  there exists a smooth compactification $\gamma\omega$ of $\omega$ such that $\growth$ is not separable.
\end{thm}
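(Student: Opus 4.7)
The plan is to construct $\gamma\omega = K_\fA$ via a transfinite recursion of length $\omega_1$ using CH, producing simultaneously an increasing chain $(\fA_\alpha)_{\alpha<\omega_1}$ of countable subalgebras of $P(\omega)$ containing $\fin$ and, for each $\alpha$, a sequence $(\nu_n^\alpha)_n$ of probability measures in $\ba_+(\fA_\alpha)$ vanishing on $\fin$ with $\nu_n^\alpha - \delta_n \to 0$ on $\fA_\alpha$, compatibly across stages (i.e.\ $\nu_n^\beta$ extends $\nu_n^\alpha$ for $\alpha\leq\beta$). I would take $\fA_0$ to be the finite-cofinite algebra with $\nu_n^0$ equal to $1$ on cofinite sets and $0$ on finite sets; limit stages are handled by union. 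The final data $\fA = \bigcup_\alpha \fA_\alpha$ and $\nu_n = \bigcup_\alpha \nu_n^\alpha$ then satisfies the hypothesis of Lemma \ref{3:1}, yielding smoothness of $K_\fA$.

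The successor step $\alpha \to \alpha+1$ consists of two coupled tasks. The first is measure extension: given a set $X_\alpha \sub \omega$, let $\fA_{\alpha+1} = \fA_\alpha[X_\alpha]$ and, by Theorem \ref{3:2}, extend each $\nu_n^\alpha$ to $\nu_n^{\alpha+1}$ with $\nu_n^{\alpha+1}(X_\alpha) = t_n$ for some $t_n \in [(\nu_n^\alpha)_*(X_\alpha), (\nu_n^\alpha)^*(X_\alpha)]$; if the $t_n$ can be chosen with $t_n - \chi_{X_\alpha}(n) \to 0$, then by Lemma \ref{3:3} we obtain $\nu_n^{\alpha+1} - \delta_n \to 0$ on $\fA_{\alpha+1}$. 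The second task is to force non-separability of the final remainder: using CH, enumerate all countable families $(\cF_n^\alpha)_n$ of filters on the countable algebras $\fA_\alpha/\fin$, with bookkeeping so that every countable would-be dense family in the eventual $\growth$ is addressed at some stage. At stage $\alpha+1$ we additionally require that every element of each $\cF_n^\alpha$ meets $\omega \sm X_\alpha$ in an infinite set; this forces any ultrafilter of $\fA/\fin$ extending $\cF_n^\alpha$ to contain $\omega \sm X_\alpha$ and thus to miss the nonempty clopen $\widehat{X_\alpha}\cap \growth$, so no such family can be dense.

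The main obstacle is proving, at each successor stage, the existence of an $X_\alpha$ that simultaneously admits a suitable sequence $(t_n)$ and blocks the prescribed countable collection of filters. The blocking condition is a standard combinatorial demand on countably many countably-generated filters, easily met in isolation. The measure-extension condition is more subtle: although $\nu_n^\alpha(A) \to \chi_A(n)$ for each fixed $A \in \fA_\alpha$, the infimum and supremum defining $(\nu_n^\alpha)^*(X_\alpha)$ and $(\nu_n^\alpha)_*(X_\alpha)$ need not commute with the limit in $n$, so the desired uniformity is not automatic. My strategy would be to maintain throughout the recursion an additional structural feature of each $\nu_n^\alpha$ --- for instance expressing them uniformly as normalised sums over a fixed partition of $\omega$ into finite blocks, in the spirit of Lemma \ref{5:4} --- so that the outer and inner measures of any adjoined $X_\alpha$ take a transparent combinatorial form and $(t_n)$ can be produced block-by-block. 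Under CH one enumerates the $\omega_1$ blocking tasks, while at each stage a countable diagonalisation against the candidate data is carried out, consistent with the authors' remark that $\fb = \fc$ should suffice for this diagonalisation step.
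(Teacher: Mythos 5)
Your global architecture matches the paper's: an $\omega_1$-chain of countable algebras built under CH, measures extended stage by stage via Theorem \ref{3:2} and Lemma \ref{3:3}, and a diagonalisation against all candidate countable dense subsets of the remainder. But there is a genuine gap exactly where you locate ``the main obstacle'': you never produce, at a successor stage, a single set $X_\alpha$ that simultaneously blocks the prescribed countable family and satisfies $(\nu_n^\alpha)^*(X_\alpha)\to 1$ along $n\in X_\alpha$ and $(\nu_n^\alpha)_*(X_\alpha)\to 0$ along $n\notin X_\alpha$, and the structural invariant you propose to maintain in order to get this cannot work. Measures of the form $\frac{1}{|I_k|}\sum_{i\in I_k}\delta_i$ (in the spirit of Lemma \ref{5:4}) are concentrated on $\omega$ and so violate condition (i) of Lemma \ref{3:1}, namely $\nu_n|\fin\equiv 0$; likewise your initial data (the finite--cofinite algebra with the two-valued measure) is purely atomic. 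The invariant the paper actually carries through the recursion is \emph{nonatomicity} of the $\nu_n$: Lemma \ref{6:2} supplies a nonatomic starting sequence on $\alg(\fC\cup\fin)$ with $\fC\cong\clop(2^\omega)$, and Lemma \ref{6:3} preserves it. Nonatomicity is what allows one to choose, inside each ultrafilter $p_j$ of the dense family being killed, a set $A_j$ with $\nu_i(A_j)<2^{-(j+2)}$ for the relevant indices; the single set $X=\bigcup_j A_j$ then does both jobs at once: $X$ contains a member of each $p_j$ (so every extension of every $p_j$ lands in $\wh{X}$ while $\omega\sm X$ remains infinite), $\nu_n^*(X)=1$ because every set of positive measure meets the dense family, and $(\nu_n)_*(X)\to 0$ along $n\notin X$ by the summable bounds together with the bookkeeping points $c(j)$ forcing every $\fB$-subset of $X$ into a finite union of the $A_j$. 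Nothing in your proposal substitutes for this mechanism, and without it the interval $[(\nu_n^\alpha)_*(X_\alpha),(\nu_n^\alpha)^*(X_\alpha)]$ need not contain admissible values $t_n$ converging to $\chi_{X_\alpha}(n)$.

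A secondary but concrete error is the blocking condition itself: requiring that every element of a filter $\cF_n^\alpha$ meet $\omega\sm X_\alpha$ in an infinite set only says that $\cF_n^\alpha\cup\{\omega\sm X_\alpha\}$ has the finite intersection property modulo $\fin$, i.e.\ that \emph{some} ultrafilter extending $\cF_n^\alpha$ contains $\omega\sm X_\alpha$; it does not force \emph{every} extension to contain it, so the extended family may still meet $\wh{X_\alpha}\cap(\growth)$ and non-density does not follow. To push all extensions to one side you need an element of the filter almost contained in that side --- which is precisely what the paper arranges by taking $A_j\in p_j$ with $A_j\sub X$.
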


This section is devoted to proving  Theorem \ref{6:1}.
The desired compactification will be defined as $K_\fA$, where $\fA=\bigcup_{\alpha<\omega_1} \fA_\alpha$,
and countable algebras $\fA_\alpha$ are defined inductively. Lemma \ref{6:2} describes the staring point of the construction; we only sketch its proof here since
it closely follows  the proof of Sobczyk's theorem in Diestel \cite{Di84}.
Subsequent Lemma \ref{6:3} contains the essence of the inductive step

\begin{lemma}\label{6:2}
  There exist a countable nonatomic Boolean algebra $\fC\subseteq P(\omega)$
    and a~sequence of nonatomic
   probability  measures $(\nu_n)_{n}$ on $\fB = \alg(\fC\cup\fin)$
    such that $\nu_n|\fin\equiv0$ for every $n$ and $\nu_n-\delta_n\to0$ on $\fB$.
\end{lemma}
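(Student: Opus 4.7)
The plan is to realize $\fC$ as the image of $\clop(2^\omega)$ under a natural embedding into $P(\omega)$ induced by a countable dense sequence in $2^\omega$, and to take each $\nu_n$ to be the push-forward of the conditional Lebesgue measure on a small basic clopen around the $n$-th element of that sequence. This serves as a concrete replacement for the Hahn--Banach step in Diestel's presentation of Sobczyk's theorem, producing \emph{nonatomic} measures directly, without invoking a $weak^*$-compactness argument.

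First I would fix a one-to-one dense sequence $(d_n)_{n\in\omega}$ in $2^\omega$ and define $\vf\colon\clop(2^\omega)\to P(\omega)$ by $\vf(a)=\{n\colon d_n\in a\}$. Density makes $\vf$ injective, sends the top element to $\omega$, and sends every nonempty clopen to an infinite subset of $\omega$. Set $\fC=\vf[\clop(2^\omega)]$, which is a countable nonatomic subalgebra of $P(\omega)$ whose nonzero elements are infinite. Every $B\in\fB=\alg(\fC\cup\fin)$ has the form $B=\vf(a)\btu F$ with $a\in\clop(2^\omega)$ and $F\in\fin$, and the clopen $a$ is uniquely determined by $B$: distinct clopens differ on a nonempty clopen, which contains infinitely many $d_n$'s. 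Fix any sequence $k_n\to\infty$, let $s_n=d_n\uhr k_n\in 2^{k_n}$, and set
\[\nu_n\bigl(\vf(a)\btu F\bigr)=\frac{\lambda(a\cap[s_n])}{\lambda([s_n])},\]
where $\lambda$ is the standard product measure on $2^\omega$. Unique decomposition makes $\nu_n$ well-defined; finite additivity is inherited from $\lambda$; and $\nu_n|\fin\equiv 0$ together with $\nu_n(\omega)=1$ is immediate.

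Two things remain to verify. Nonatomicity of $\nu_n$ on $\fB$ is inherited from that of $\lambda$: given $\eps>0$ one partitions $2^\omega$ into clopens of $\lambda$-measure below $\eps\we\lambda([s_n])$ and pulls back through $\vf$ to a partition of $\omega$ inside $\fB$ whose pieces all have $\nu_n$-measure below $\eps$. For the convergence $\nu_n-\delta_n\to 0$, fix $B=\vf(a)\btu F\in\fB$ and pick $\ell$ so that $a$ is a union of basic clopens $[t]$ with $|t|=\ell$. If $n\notin F$ and $k_n\ge\ell$, then $[s_n]$ refines one such $[t]$, so either $[s_n]\sub a$, in which case $d_n\in a$, $n\in B$, and $\nu_n(B)=1=\delta_n(B)$, or $[s_n]\cap a=\emptyset$, in which case $n\notin B$ and $\nu_n(B)=0=\delta_n(B)$. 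Hence $\nu_n(B)=\delta_n(B)$ for all sufficiently large $n$, which trivially gives condition (ii) of Lemma~\ref{3:1}.

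Because the construction is explicit, I do not foresee a serious obstacle; the only slightly delicate point is the well-definedness of $\nu_n$, which rests on the injectivity of $\vf$ and on every nonempty element of $\fC$ being infinite---both consequences of the density of $(d_n)$. The fact that the $\nu_n$ are \emph{nonatomic}, rather than merely satisfying the hypotheses of Lemma~\ref{3:1}, is what makes this a genuine starting point for Lemma~\ref{6:3}: the inductive step will need to split $\nu_n$ across a new generator $X$ via the inner/outer-measure extension of Theorem~\ref{3:2} while preserving convergence via Lemma~\ref{3:3}, and this requires room to place $\nu_n(X)$ strictly between the relevant bounds.
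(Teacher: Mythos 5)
Your proof is correct, and it takes a genuinely different route from the paper's. The paper also starts from a copy $\fC$ of $\clop(2^\omega)$ inside $P(\omega)$ and sets $\fB=\alg(\fC\cup\fin)$, but then argues softly, following Diestel's presentation of Sobczyk's theorem: since $\fB$ is countable, $K_\fB$ is metrizable, so the unit ball of $\cM(K_\fB)$ is $weak^*$ metrizable by a metric $\rho$; the nonatomic members of the set $\cP$ of probability measures vanishing on $\omega$ are $weak^*$ dense in $\cP$, and one picks nonatomic $\nu_n\in\cP$ with $\rho(\nu_n,\delta_n)\le 2\,{\rm dist}(\delta_n,\cP)$, convergence following because every $weak^*$ cluster point of $(\delta_n)_n$ lies in $\cP$. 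Your construction replaces this approximation step by the explicit conditional measures $\lambda(\,\cdot\mid[s_n])$ pushed through $\vf$, which buys several things: you avoid having to verify the density of nonatomic measures in $\cP$ and that ${\rm dist}(\delta_n,\cP)\to 0$ (both of which the paper leaves largely to the reader); the nonatomicity of each $\nu_n$ is transparent; and you obtain the stronger conclusion that for each fixed $B\in\fB$ one has $\nu_n(B)=\delta_n(B)$ for all large $n$, not merely $\nu_n(B)-\delta_n(B)\to 0$. The delicate points you flag --- injectivity of $\vf$, the infiniteness of nonzero elements of $\fC$, and hence the unique decomposition $B=\vf(a)\btu F$ --- are exactly the right ones, and all follow from the density of $(d_n)_n$. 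Either argument serves equally well as the base step for the induction proving Theorem \ref{6:1}.
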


\begin{proof}
 There is an algebra $\fC\sub P(\omega)$ isomorphic to  $\clop(2^\omega)$, the algebra of clopen subsets of the Cantor set.
 We can copy the standard product measure  $\nu$ on $2^\omega$ onto $\fC$ (and denote it by the same letter).
  Put $\fB = \alg(\fC\cup\fin)$. Then we have a probability measure $\mu$ on $\fB$ defined by
  $\mu(C\btu I)=\nu(C)$ for every $C\in\fC$ and $I\in\fin$.

  Since $\fB$ is countable the space $K_\fB$ is metrizable, so $C(K_\fB)$ is a separable Banach space.
  Hence the unit ball
    $\cM_1(K_\fB)$ is metrizable in its $weak^*$ topology. Let the metric $\rho$ metrize  $\cM_1(K_\fB)$.
    Denote by $\cP$ the space of probability measures on $K_\fB$ that vanish on $\omega$.

    It is not difficult to check that the set of nonatomic $\nu\in \cP$ is $weak^*$ dense in $\cP$:
    consider for instance the convex hull of the family $\mu_C\in \ba_+(\fB)$, where $\mu_C(B)=(1/\mu(C))\cdot \mu(C\cap B)$ and $C\in\fC$.
    Thus for every $n$ we can choose nonatomic $\nu_n\in P$ such that
    \[ \rho(\nu_n,\delta_n)\le 2\cdot {\rm dist}(\delta_n,P).\]
  Then $\nu_n-\delta_n\to 0$ in the $weak^*$ topology.
\end{proof}

\begin{lemma}\label{6:3}
  Let $\fB\subseteq P(\omega)$ be a countable Boolean algebra containing $\fin$. Moreover, suppose that:
    \begin{enumerate}[(i)]
      \item we are given  a set $ \{ p_j\colon {j\in\omega}\}$ of ultrafilters which  dense in $K_\fB^*$;
      \item $(\nu_n)_{n\in\omega}$ is a sequence of nonatomic probability measures
        on $\fB$;
      \item $\nu_n(I) = 0$ for every $I\in\fin$ and every $n$;
      \item $\nu_n - \delta_n \to 0$ on $\fB$.
  \end{enumerate}

  Then there exists an infinite set $X\subseteq\omega$ such that
  \begin{enumerate}[---]
    \item for any extension $\widetilde p_j$ of $p_j$ to an ultrafilter on $\fB[X]$ the set
      $\{\widetilde p_j\colon j\in\omega\}$  is not dense in $K_{\fB[X]}^*$;
    \item we can extend every $\nu_n$  to  a probability measure $\widetilde\nu_n$ on $\fB[X]$ so that
      $\widetilde\nu_n - \delta_n \to 0$ on $\fB[X]$.
  \end{enumerate}
\end{lemma}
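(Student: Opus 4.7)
My plan is to build $X=\{n_0<n_1<\cdots\}$ together with witness sets $A_j\in p_j$ by a diagonal induction against enumerations of $\{p_j:j\in\omega\}$ and of $\fB$. The first bullet will be immediate from the finiteness of $A_j\cap X$, and for the second I will use Lemma~\ref{3:3} and Theorem~\ref{3:2} to reduce matters to verifying suitable asymptotic bounds on the outer and inner measures of $X$.

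At stage $k$, having fixed $n_0,\ldots,n_{k-1}$ and $A_0,\ldots,A_{k-1}$, I would first pick $A_k\in p_k$ with $\nu_m(A_k)<2^{-k-2}$ for every $m\le k$. This is possible because, by the nonatomicity of each $\nu_m$ and Lemma~\ref{3:4}, the Radon measure $\wh{\nu_m}$ on $K_\fB$ vanishes at the singleton $\{p_k\}$; outer regularity then supplies, for each $m\le k$, a clopen $\wh{A_{k,m}}\ni p_k$ of $\wh{\nu_m}$-mass less than $2^{-k-2}$, and $A_k:=\bigcap_{m\le k}A_{k,m}\in p_k$ does the job. Next I would pick $n_k>n_{k-1}$ outside $A_0\cup\cdots\cup A_k$; this is possible because $\nu_0(A_0\cup\cdots\cup A_k)\le\sum_{j\le k}2^{-j-2}<1/2$ while $\nu_0$ vanishes on finite sets. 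Further constraints on $n_k$, keyed to an enumeration $\fB=\{B_l\}$, are imposed to split every infinite $B_l\in\fB$ against $X$ (so that $B_l\sm X$ stays infinite) and to force $X$ to meet every $B_l$ that is non-negligible for some $\nu_m$ with $m\le k$. With $X:=\{n_k\}$, the finiteness of $A_j\cap X\subseteq\{n_0,\ldots,n_{j-1}\}$ forces every extension $\widetilde p_j$ of $p_j$ to an ultrafilter on $\fB[X]$ to contain $\omega\sm X$, so $\wh X\cap K^*_{\fB[X]}$ is a nonempty clopen subset of the remainder disjoint from $\{\widetilde p_j\}_{j\in\omega}$, yielding the first bullet.

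For the second bullet, by Lemma~\ref{3:3} it is enough to produce extensions $\widetilde\nu_n\in\ba_+(\fB[X])$ with $\widetilde\nu_n(X)-\delta_n(X)\to 0$, and by Theorem~\ref{3:2} we may set $\widetilde\nu_n(X)$ to any value in $[(\nu_n)_*(X),\nu_n^*(X)]$. The task therefore reduces to proving that $\nu_n^*(X)\to 1$ along $n\in X$ and $(\nu_n)_*(X)\to 0$ along $n\notin X$; the splitting and meeting constraints in the induction are engineered to deliver precisely these asymptotics. The principal obstacle is the uniformity required for the measure convergence: we have pointwise $\nu_n(B)-\delta_n(B)\to 0$ for each $B\in\fB$, but upgrading this to uniform control over the countable families $\{B\in\fB:B\subseteq X\}$ (for the inner measure) and $\{C\in\fB:C\cap X=\emptyset\}$ (for the outer measure) forces the interleaved diagonal structure of the construction, and it is here that the nonatomicity of the $\nu_n$'s is used crucially to absorb the cumulative $\nu_m$-mass of $A_0\cup\cdots\cup A_k$ in the bookkeeping.
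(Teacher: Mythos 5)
Your global strategy --- taking $X=\{n_k:k\in\omega\}$ thin, with $A_j\in p_j$ almost disjoint from $X$, and reducing the second bullet via Theorem \ref{3:2} and Lemma \ref{3:3} to the asymptotics $\nu_n^*(X)\to 1$ along $n\in X$ and $(\nu_n)_*(X)\to 0$ along $n\notin X$ --- is viable; it is the mirror image of the paper's construction, which takes $X=\bigcup_jA_j$ fat and uses auxiliary points $c(j)$ playing the role of your $n_k$'s. The first bullet and the inner-measure half are fine (indeed, since $\{p_j\}$ is dense and each $A_j\sub\omega\sm X$ is infinite, no infinite member of $\fB$ is contained in $X$, so $(\nu_n)_*(X)=0$ outright). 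The genuine gap is in the outer-measure half, and it sits exactly where you wave at ``engineering'': the conditions you actually impose do not yield $\nu_{n_k}^*(X)\to 1$.

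Concretely: since $A_j\cap X$ is finite, $A_j\sm X\in\fB$ is disjoint from $X$ and $\nu_{n_k}(A_j\sm X)=\nu_{n_k}(A_j)$, so $\nu_{n_k}^*(X)\le 1-\sup_j\nu_{n_k}(A_j)$, and you need $\sup_j\nu_{n_k}(A_j)\to 0$. Your condition $\nu_m(A_j)<2^{-j-2}$ for $m\le j$ controls $\nu_{n_k}(A_j)$ only when $n_k\le j$; for the growing block of indices $j<n_k$ you have nothing uniform, and if some $n_k$ with $k<j$ happens to lie in the later set $A_j$ then $\nu_{n_k}(A_j)$ can even be close to $1$ (by \ref{eq:conv}). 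Three repairs are needed, and together they are the actual content of the paper's Claim: (i) make every $A_j$ avoid all the $n_i$'s, past and future, so that $n_k\notin A_j$ for \emph{all} $j,k$; (ii) replace your smallness condition by the uniform one ``$\nu_n(A_j)<2^{-(j+2)}$ for \emph{every} $n\notin A_j$'', which is obtained by first choosing $A$ small for finitely many measures and then absorbing into $A_j$ the finite exceptional set $\{n\notin A:\nu_n(A)\ge 2^{-(j+3)}\}$ (finite precisely because $\nu_n-\delta_n\to 0$; this uses hypothesis (iv), not nonatomicity); and (iii) discard the constraint ``$X$ meets every non-negligible $B_l$'', which is useless --- $B_l\sm X$ retains the full mass of $B_l$ --- and replace it by the covering dichotomy ``either $B_l\sub^*\bigcup_{j\le N}A_j$ for some $N$, or some $n_i\in B_l$''. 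With (i)--(iii), every $C\in\fB$ disjoint from $X$ is almost contained in a finite union of $A_j$'s, and the telescoping estimate $\nu_{n_k}\left(\bigcup_{j=l}^NA_j\right)<2^{-(l+1)}$ together with $\nu_{n_k}\left(\bigcup_{j<l}A_j\right)\to 0$ gives the required uniform convergence. Without these steps the proof does not close.
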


\begin{proof}
  Since $\fB$ is countable, we fix an enumeration $\fB = \{B_0, B_1, \ldots\}$.
   \medskip

 \noindent {\sc Claim.}  There are  infinite sets $A_j\in\fB$ and  $c(j)\in\omega$ for $j\in\omega$
    such that
  \begin{enumerate}[(1)]
    \item $\nu_0(A_j) <  2^{-(j+2)}$ for all $j\in\omega$;
    \item $A_j \in p_j$;
    \item for every $i$ and  $n\in\omega\sm A_j$ we have  $\nu_n(A_j)< 2^{-(j+2)}$;
    \item $c(j)\notin A_k$ for all $j,k$;
   \item for every $j$ either $B_j \subseteq \bigcup_{k\leq j} A_k$ or    $c(j)\in B_j$.
   \end{enumerate}

{\em Proof of Claim.} We proceed by induction on $j$.
Suppose that we have already constructed $A_0,\ldots, A_j$ and $c(0),\ldots c(j)$ and put $m=\max_{i\le j} c(i)+1$
(of course we can additionally assume that $0\notin A_i$).

  Since all the measures $\nu_i$ are nonatomic and $p_{j+1}$ is a nonprincipal ultrafilter, there is
  $A \in p_{j+1}$ such that $A\,\cap\, m=\emptyset$ and $\nu_i(A)<2^{-(j+3)}$ for every $i\le m$.
  Since $\nu_n(A)-\delta_n(A)\to 0$, the set $F=\{n\notin A: \nu_n(A)\ge 2^{-(j+3)}\}$ is finite.
  Define $A_{j+1}=A\cup F$. Since  $\nu_i(F)=0$ for every $i$,  (1) and (3) are granted by the choice of $A$.
  Condition (4) holds since $F\, \cap\, m=\emptyset$.

Now we can set $c(j+1)=0$ or choose $c(j+1)\in B_{j+1}\sm \bigcup_{k\le j+1}A_k$, if possible. This verifies Claim.
 \medskip

We take the sets $A_j$ from Claim and prove that  the set $X=\bigcup_j A_j$ is as desired.
We first check the following properties of $X$.

  \begin{enumerate}[(a)]
     \item For any $B\in\fB$ if $B\subseteq X$ then
     $B\subseteq \bigcup_{j\leq N} A_j$ for some $N\in\omega$.
    \item $\omega\sm X$ is infinite.
    \item $ \nu_n^*(X) = 1$ for every $n$.
    \item $\lim_{n\notin X} (\nu_n)_* (X) = 0$.
  \end{enumerate}

Ad  (a). Take any $B_j\in \fB$ such that $B_j\subseteq X$. Suppose that
$B_j\not\subseteq \bigcup_{i\leq j}A_i$.
Then $c(j)\in B_j\sm X$ by (4) and (5), a contradiction.

  Ad (b). Suppose $\omega\sm X$ is finite.
  Then, since every finite set belongs to the algebra $\fB$,
    $X\in\fB$.
  By (a) $X\subseteq \bigcup_{i\leq N} A_i$ for some $N$.
  Since $\nu_0$ is insensitive to finite modifications of sets,
    by condition (1)
    \[\nu_0(\omega) = \nu_0(X)\le
      \nu_0\left(\bigcup_{i\leq N} A_i\right) \leq
      \sum_{i\leq N} 2^{-(i+2)} \leq 1/2,\]
    which contradicts the fact that $\nu_0$ is a probability measure
    on $\omega$.

 Ad  (c). For any $n$, if $B\in\fB$ and $\nu_n(B)>0$ then $B$ is infinite so
 $B\in p_j$ for some $j$ and hence $B\cap X\neq\emptyset$. This proves $\nu_n^*(X)=1$.

 Ad (d). By (a) every $B\in\fB$ that is contained in $X$ is in fact contained in some finite union of $A_j$'s; hence
 \[ (\nu_n)_*(X)=\sup_N \nu_n\left( \bigcup_{i\le N} A_i\right).\]
 Observe that by condition (3), if $n\notin X$ and $k<N$ then
    \begin{equation}\label{eq:inner_measure}
      \nu_n\left(\bigcup_{i=k}^N A_i\right) \le
      \sum_{i=k}^N \nu_n(A_i) \le
      \sum_{i=k}^N 2^{-(i+2)} < 2^{-(k+1)}.
    \end{equation}

Using the above estimate we can compute the limit of inner measures:
    \begin{align*}
      \lim_{n\notin X} {\nu_n}_*(X) &=
      \lim_{n\notin X} \sup_N \nu_n\big(\bigcup_{i\leq N} A_i\big) \\ &\leq
      \lim_{n\notin X} \sup_N \Bigg(\nu_n\bigg(\bigcup_{i<k} A_i\bigg) +
        \nu_n\left(\bigcup_{i=k}^N A_i\right)\Bigg) \\ &\leq
      \lim_{n\notin X} \Bigg(\nu_n\bigg(\bigcup_{i<k} A_i\bigg) +
        2^{-(k+1)}\Bigg) \text{\scriptsize(by \eqref{eq:inner_measure})}\\ &=
      \lim_{n\notin X}\nu_n\bigg(\bigcup_{i<k} A_i\bigg) + 2^{-(k+1)}.
    \end{align*}
  But for any $k\in\omega$ the set $\bigcup_{i<k}A_i\in\fB$ so we
    have $\lim_{n\notin X}\nu_n(\bigcup_{i<k}A_i) = 0$.
  Since $k$ is arbitrary, this proves (d).
\medskip

Once we know that  $X$ satisfies (a)-(d),  we can check that $X$ is indeed the set
  are looking for.
Let, for every $j$, $\widetilde p_j \in\ult(\fB[X])$ be an
    arbitrary extension of $p_j$.
  Because $X\supseteq A_j$ and $A_j \in p_j$ so $X \in \widetilde p_j$.
  Thus $\omega\sm X$ omits all  the ultrafilters $\widetilde p_j$ and, since it is an
    infinite set,
    \[\wh{\omega\sm X}\cap K_{\fB[X]}^*\neq\emptyset,\]
     which indicates that $\widetilde p_j$ are not dense in $K_{\fB[X]}^*$.

  Now appealing to Theorem \ref{3:2} we define the measures $\widetilde\nu_n$ on $\fB[X]$ extending $\nu_n$ so that
  \begin{equation}
  \widetilde{\nu_n}(X) = \begin{dcases*}
  \nu_n^*(X) & for $n\in X$, \\
  (\nu_n)_*(X) & for $n\notin X$.
  \end{dcases*}
  \end{equation}

  Then for $n\in X$ we have $\widetilde\nu_n(X) = \nu_n^*(X) \to 1$ and for
  $n\notin X$ we have $\widetilde\nu_n(X) = (\nu_n)_*(X) \to 0$
  by property (d).
  Using Lemma \ref{3:3} we conclude that
    $\widetilde{\nu_n} - \delta_n \to 0$ on $\fB[X]$, and the proof is complete.
\end{proof}

We have already all essential ingredients to carry out a diagonal construction leading to Theorem \ref{6:1}.

\begin{proof}[Proof of Theorem \ref{6:1}]
Let $\fA_0$ be the algebra from Lemma \ref{6:2} and let $(\nu_n^0)_n$ be  a sequence
of nonatomic probability measures on $\fA_0$ such that $\nu^0_n-\delta_n\to 0$.

We construct inductively, for $\xi<\omega_1$,  a sequence of countable algebras $\fA_\xi\sub P(\omega)$, sets $X_\xi\sub\omega$ and
sequences $(\nu_n^\xi)_n$ of probability measures on $\fA_\xi$ such that

\begin{enumerate}[(i)]
\item $\fA_\beta\sub \fA_\xi$ for all $\beta<\xi<\omega_1$;
\item $\fA_\xi$ is generated by $\bigcup_{\beta<\xi} \fA_\beta$ and $X_\xi$;
\item $\nu^\xi_n\,|\, \fA_\beta=\nu^\beta_n$ for every $n$ and $\beta<\xi$;
\item $\nu^\xi_n-\delta_n\to 0$ on $\fA_\xi$ for every $\xi$.
\end{enumerate}

Then we consider the algebra $\fA=\bigcup_{\xi<\omega_1}\fA_\xi$;  for every $n$ let $\mu_n$ be the unique
probability measure on $\fA$ such that $\mu_n|\fA_\xi=\nu^\xi_n$ for $\beta<\xi$. It is clear that $\mu_n-\delta_n\to 0$ on $\fA$
so $K_\fA$ is a smooth compactification of $\omega$ by Lemma \ref{3:1}.
Therefore it is enough to check that by a suitable choice of sets $X_\xi$, we can guarantee  that $K_\fA^*$  is not  separable.

Using CH we fix an enumeration $\{D(\xi): \alpha<\omega_1\}$ of all countable
dense sets in $K_{\fA_0}^*$.
At step $\xi$ we let $\fB=\bigcup_{\alpha<\xi} \fA_\alpha$ and consider a sequence of measures $\nu_n$ defined on $\fB$, where
 $\nu_n$ is the unique extensions of $\nu^\alpha_n$, $\alpha<\xi$.
 Then we apply Lemma \ref{6:3}
to find a set $X_\xi$ such that (iv) is granted for $\fA_\xi=\fB[X_\xi]$ and, at the same time $X_\xi$ witnesses
any extensions of ultrafilters from $D(\xi)$ are no longer dense in $K_{\fA_\xi}^*$.

If follows $K_\fA^*$ is not separable. Indeed, if we had a countable dense set $D\sub K_\fA^*$ then
the set $D=\{x|\fA_0:x\in X\}$  would be dense  in $K_{\fA_0}^*$. But $D=D_\xi$ for some $\xi<\omega_1$ and
$D_\xi$ is not dense in $\fA_\xi$.
\end{proof}

\section{Small and ugly}\label{sec:7}

We construct in this section a relatively small compactification $\gamma\omega$
which is not smooth, contrastive with the compactification from section \ref{sec:6}.

\begin{thm}\label{7:1}
  There exists a non-smooth compactification $\gamma\omega$ which is first-countable and  which
   remainder $\growth$ is separable.
\end{thm}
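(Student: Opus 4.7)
I would attempt to construct $\gamma\omega=K_\fA$ for a subalgebra $\fA$ with $\fin\sub\fA\sub P(\omega)$, built by transfinite induction so that (a) a fixed countable family of ultrafilters stays dense in $K_\fA^*$, yielding separability of $\growth$, and (b) every bounded candidate sequence $(\nu_n)_n\sub\ba(\fA)$ vanishing on $\fin$ is diagonally killed, so by Lemma \ref{3:1} $K_\fA$ is not smooth. First-countability at points of $\omega$ is automatic since they are isolated; at nonprincipal ultrafilters it will follow from restricting the inductive additions to a class of ``partition-compatible'' sets. Crucially, $\fA$ must end up uncountable, since otherwise $K_\fA$ is metrizable and hence smooth by Sobczyk's theorem.

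Setup. Fix a partition $\omega=\bigsqcup_j S_j$ into infinite pieces and nonprincipal ultrafilters $q_j$ with $S_j\in q_j$. Start with $\fA_0=\alg(\fin\cup\{S_j:j\in\omega\})$; then $\{q_j\}_j$ is dense in $K_{\fA_0}^*$ and each $q_j$ has the countable base $\{S_j\sm F:F\in\fin\}$. At stage $\alpha$ I enumerate the $\alpha$-th pair (countable subalgebra $\fB\supseteq\fA_0$ of $P(\omega)$, bounded sequence $(\nu_n^\alpha)_n$ on $\fB$). Using the uniform bound $\|\nu_n^\alpha\|\le M$ together with a pigeonhole on the vertical profiles $\nu_n^\alpha\uhr S_j$, I would extract an infinite subsequence along which the $\nu_n^\alpha$'s behave coherently and then locate a partition-compatible set $X_\alpha$ (one whose trace on each $S_j$ is finite or cofinite in $S_j$) for which any extension $\widetilde{\nu_n^\alpha}$ to $\fA_\alpha[X_\alpha]$ fails $\widetilde{\nu_n^\alpha}(X_\alpha)-\delta_n(X_\alpha)\to 0$. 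Lemma \ref{3:3} is then invoked to ensure the failure persists through further extensions, so the candidate is permanently destroyed.

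The main obstacle is proving, as a combinatorial lemma, that a destroying partition-compatible $X_\alpha$ always exists for every candidate $(\nu_n^\alpha)$, while simultaneously preserving density of the $q_j$'s and the countable-base property. Partition compatibility is rigid: each $X_\alpha$ amounts to a yes/no pattern on the $S_j$'s plus finite noise, and the space of such patterns may not directly contain a destroying set for an arbitrary bounded measure sequence. The resolution I envisage is to supplement the dense countable family with auxiliary ultrafilters $\{r_k\}$ introduced along the induction (living on ``cross-partition'' sets), which both enlarge the catalogue of available destroying sets and expand the dense set in $K_\fA^*$ without breaking separability or first-countability. Once this combinatorial core is established, the transfinite construction of length $\fc$ delivers the algebra $\fA$, and $K_\fA$ is the required non-smooth, first-countable compactification with separable remainder.
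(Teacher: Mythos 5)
Your high-level plan --- a transfinite induction that keeps a fixed countable family of ultrafilters dense in the remainder while diagonally destroying every bounded sequence of measures vanishing on $\fin$, then invoking Lemma \ref{3:1} --- is exactly the strategy of the paper. But the proposal stops precisely where the proof has to begin: you say yourself that ``the main obstacle is proving, as a combinatorial lemma, that a destroying partition-compatible $X_\alpha$ always exists,'' and the resolution you sketch (auxiliary ultrafilters on ``cross-partition'' sets) is a hope, not an argument. That combinatorial lemma \emph{is} the theorem. The paper's Lemma \ref{7:3} supplies it by a quite specific device: choose $q\in K_\fB^*\sm C$ which is an atom of no $\nu_n$, a decreasing base $(V_k)_k$ at $q$ with $|\nu_{n_k}|(V_k)<1/k$, points $n_k\to q$ with $\nu_{n_k}\bigl(\bigcup_{j<k}B_j\bigr)<1/k$, and disjoint $B_k,D_k\sub V_k$ converging to $q$; then $X=\{n_k:k\in\omega\}\cup\bigcup_k B_k$ forces $\bar\nu_{n_k}(X)<2/k$ while $n_k\in X$, so $\bar\nu_{n_k}(X)-\delta_{n_k}(X)\not\to 0$. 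Two points you omit are essential here. First, this estimate only works for \emph{norm-preserving} extensions $\bar\nu_n$ (one needs $|\bar\nu_n|$ to extend $|\nu_n|$ in order to bound $|\bar\nu_{n_k}|(V_k)$), and the paper closes that loop at the very end by noting that every measure on the final algebra attains its norm on a countable subalgebra, so some restriction $\mu_n|\fA_\alpha$ is norm-preserving and was enumerated. Your proposal never confronts this, and your appeal to Lemma \ref{3:3} is misplaced: that lemma propagates \emph{convergence} upward, whereas what you need is persistence of the failure, which is automatic by restriction but only once the norm problem is settled. Second, the nonatomicity of $\fB/\fin$ and the choice of $q$ as a non-atom of every $\nu_n$ are what make the sets $B_k, V_k$ available at all.

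The ``partition-compatible'' restriction is, moreover, likely a dead end for both of the side conditions it is supposed to secure. A selector $X$ meeting each $S_j$ in one point is partition-compatible, yet $S_j\sm X\in q_j$ for every $j$, so after adding $X$ the clopen set determined by $X$ meets the remainder but misses every $q_j$: density of your fixed family is destroyed in one step. Likewise, the sets $X_\beta=\bigcup_{j\in T_\beta}S_j$ for an independent family $(T_\beta)_{\beta<\fc}$ of subsets of the index set are all partition-compatible, and the algebra they generate has remainder mapping onto $2^{\fc}$, which is not first-countable. So partition-compatibility by itself certifies neither separability nor first-countability, and enlarging the dense family ``along the induction'' over $\fc$ stages risks producing an uncountable dense set. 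What actually controls character and density in the paper is Koppelberg's minimal-extension machinery (Lemmas \ref{7:1.1} and \ref{7:2}): each added $X$ splits exactly one ultrafilter $q(X)$, the reservoirs $Q_\alpha$ are pairwise disjoint so no point is split more than once, every unsplit ultrafilter keeps its base from the smaller algebra, and the fixed countable dense set survives each step. Without a substitute for this bookkeeping, and without an actual proof of the destroying lemma, the proposal does not establish the theorem.
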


We again construct a certain algebra $\fA\sub P(\omega)$; this time the main idea is to keep a fixed countable
dense set of ultrafilter and to kill all possible sequences of measures that would witness the smoothness.

We shall use  the notion of minimal extensions of  algebras introduced by Koppelberg \cite{Ko89} which we recall now
in the context of  subalgebras od $P(\omega)$. The basic facts we list below can be found in \cite{Ko89} or \cite{BN07}.

If $\fA\sub P(\omega)$ and $X\sub\omega$ then $\fA[X]$ is said to be
a {\em minimal extension} of $\fA$ if for any algebra $\fB$, if $\fA\sub\fB\sub \fA[X]$ then either $\fB=\fA$ or $\fB=\fA[X]$.
This is equivalent to saying that for every $A\in\fA$, either $ X\cap A \in\fA$ or $X\sm A\in\fA$.

If $\fA[X]\not=\fA$   is a minimal extension then there is exactly  one $q(X)\in\ult(\fA)$ that gets split in $\fA[X]$; this is
$q(X)=\{A\in\fA\colon A\cap X\notin\fA\}$.
Then  every ultrafilter $p\neq q(X)$ has a unique extension to $\widetilde p\in\ult(\fA[X])$.

Given a sequence $A_n\in\fA$ and $p\in\ult(\fA)$,
it will be convenient to say that  $A_n$ converge to $p$ if every $B\in p$ contains almost all $A_n$.

\begin{lemma}\label{7:1.1}
If $\fA[X]$ is a minimal extension of $\fA$ and $\mu\in\ba_+(\fA)$ does not have an atom
at $q(X)$ then $\mu$ has a unique extension  to $\widetilde \mu\in\ba_+(\fA[X])$.
\end{lemma}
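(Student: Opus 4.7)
The plan is to reduce uniqueness to the equality $\mu_*(X) = \mu^*(X)$, and then to use the nonatomicity of $\mu$ at $q(X)$ to squeeze the outer measure of $X$ arbitrarily close to its inner measure.

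First, by Theorem \ref{3:2}, every extension $\widetilde\mu\in\ba_+(\fA[X])$ of $\mu$ is of the form $\mu_t$ for some $t\in[\mu_*(X),\mu^*(X)]$, and in fact such a $\mu_t$ is determined by the single value $\widetilde\mu(X)=t$. Indeed, since $\fA[X]$ is a minimal extension of $\fA$, for every $A\in\fA$ either $A\cap X\in\fA$, in which case $\widetilde\mu(A\cap X)=\mu(A\cap X)$, or else $X\sm A\in\fA$, in which case $\widetilde\mu(A\cap X)=\widetilde\mu(X)-\mu(X\sm A)$; applying the same dichotomy to $A^c$ pins down $\widetilde\mu(A\sm X)$ as well. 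Consequently, every element $(A\cap X)\cup(A'\sm X)\in\fA[X]$ has its measure determined by $\widetilde\mu(X)$ and $\mu$, so it suffices to prove that $\mu_*(X)=\mu^*(X)$.

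Now I would fix $\eps>0$. By hypothesis $\mu$ has no atom at $q(X)$, so by Lemma \ref{3:4} there exists $B\in q(X)$ with $\mu(B)<\eps$. By the definition of $q(X)$ we have $B\cap X\notin\fA$, so the minimal extension property forces $X\sm B\in\fA$. Set $D:=X\sm B\in\fA$; then $D\sub X$ gives $\mu_*(X)\ge\mu(D)$, while $D\cup B\in\fA$ (as a union of two sets in $\fA$) and $D\cup B=X\cup B\supseteq X$, so
\[\mu^*(X)\le\mu(D\cup B)\le\mu(D)+\mu(B)<\mu_*(X)+\eps.\]
Since $\eps$ is arbitrary and the reverse inequality $\mu_*(X)\le\mu^*(X)$ is automatic, we conclude $\mu_*(X)=\mu^*(X)$, so the extension $\widetilde\mu$ is unique.

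There is no real obstacle here beyond recognizing the right splitting: the key observation is that membership of $B$ in $q(X)$ is exactly the condition that forces $X\sm B\in\fA$ under minimality, and this set simultaneously witnesses a lower bound for $\mu_*(X)$ and, together with $B$, covers $X$ at a cost of $\mu(B)<\eps$. Nonatomicity of $\mu$ at $q(X)$ (Lemma \ref{3:4}) supplies the arbitrarily small $B\in q(X)$ needed to close the gap between the inner and outer measures of $X$.
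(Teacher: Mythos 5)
Your proof is correct and follows essentially the same route as the paper: both use the no-atom hypothesis to find $B\in q(X)$ with $\mu(B)<\eps$, observe that minimality then forces $X\sm B\in\fA$, and sandwich $X$ between $X\sm B$ and $(X\sm B)\cup B$ to get $\mu_*(X)=\mu^*(X)$. Your preliminary observation that, under minimality, the whole extension is determined by the single value $\widetilde\mu(X)$ is a slightly tidier way of handling the final step than the paper's ``repeat the argument with $A\cap X$ and $A\sm X$,'' but it is the same argument in substance.
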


\begin{proof}
For every $\eps>0$ there is $A\in q(X)$ with $\mu(A)<\eps$. Then $B=X\sm A\in\fA$ so we have $B\sub X\sub B_1=B\cup A$,
where $B,B_1\in\fA$ and $\mu(B_1\sm B)<\eps$. This shows that  $\mu_*(X)=\mu^*(X)$ must be equal to $\widetilde\mu(X)$, whenever
$\widetilde\mu$ is a nonnegative extension of $\mu$.
We can repeat this argument with $A\cap X$ and $A\sm X$ for $A\in\fA$ to conclude that $\widetilde\mu=\mu^*=\mu_*$ on $\fA[X]$.
\end{proof}

\begin{lemma}\label{7:2}
Let $\fB \subseteq P(\omega)$ be a  algebra containing $\fin$ with $\fB/\fin$ nonatomic.
  Let $C=\{p_j\colon  j\in \omega\}$  be a dense subset of $K_\fB^*$. Further let

  \begin{enumerate}[(i)]
  \item $q\in K^*_\fB\sm C$ be a point of countable character;
  \item $(n_k)_k$ be a sequence on $\omega$ such that $n_k\to q$ (in the space $K_\fB$);
  \item $(B_k)_k$ and $(D_k)_k$ be sequences in $\fB$ of infinite sets that converge to $q$ and such that $B_k\cap D_j=\emptyset$ for all $j,k$.
  \end{enumerate}
  If we let
  \[ X=\{n_k\colon  k\in\omega\}\cup \bigcup_k B_k,\]
  then $\fB[X]$ is a minimal extension of $\fB$ and only $q=q(X)$ may be split in $\fB[X]$.
  Consequently, every $p_j$ has a unique extension to an ultrafilter $\widetilde p_j$ on
  $\fB[X]$ and $\widetilde C=\{\widetilde p_j: j\in\omega\}$ is dense in $K_{\fB[X]}^*$.
 \end{lemma}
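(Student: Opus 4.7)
The plan is to verify, in order, that $\fB[X]$ is a minimal extension of $\fB$, that the only ultrafilter which may split is $q$, and that $\widetilde C$ is dense in $K^*_{\fB[X]}$.

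For minimality I would use the algebraic characterization recalled just before the lemma: it suffices to show that for every $A\in\fB$ either $A\cap X\in\fB$ or $X\sm A\in\fB$. Suppose $A\notin q$; then $A^c\in q$, so the convergences $B_k\to q$ and $n_k\to q$ yield $A\cap B_k=\emptyset$ for all $k\ge k_0$ and $A\cap\{n_k:k\in\omega\}$ finite. Consequently
\[A\cap X=\big(A\cap\{n_k:k\in\omega\}\big)\cup\bigcup_{k<k_0}(A\cap B_k)\]
is a finite union of members of $\fB$, hence a member of $\fB$. If $A\in q$, apply the same argument to $A^c$ to get $A^c\cap X=X\sm A\in\fB$. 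The same computation shows that $\{A\in\fB:A\cap X\notin\fB\}\subseteq q$, so in the non-trivial case $q(X)=q$.

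Since $q\notin C$, each $p_j$ differs from $q$ and therefore has a unique extension $\widetilde p_j\in\ult(\fB[X])$. For density, take a basic clopen $\wh Y\cap K^*_{\fB[X]}$ with $Y=(A_1\cap X)\cup(A_2\cap X^c)\in\fB[X]$ infinite; the strategy is to produce an infinite $Z\in\fB$ with $Z\subseteq Y$, because density of $C$ in $K^*_\fB$ then supplies some $p_j\ni Z$, whence $\widetilde p_j\ni Y$. If $A_1\in q$ then $B_k\subseteq A_1\cap X\subseteq Y$ for large $k$, giving $Z=B_k$; if $A_1\notin q$ then the first part yields $A_1\cap X\in\fB$, which serves as $Z$ whenever it is infinite. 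Symmetrically, if $A_2\notin q$ then $A_2\sm X\in\fB$ works; and if $A_2\in q$ then $D_k\subseteq A_2$ for large $k$ and $D_k\cap X=D_k\cap\{n_j\}$ (using $D_k\cap B_j=\emptyset$), so a cofinite piece of $D_k$ lies in $A_2\sm X\subseteq Y$. Since $Y$ is infinite, one of $A_1\cap X$, $A_2\sm X$ is infinite, so at least one case applies.

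The delicate point is the last case: one needs $D_k\cap\{n_j:j\in\omega\}$ finite for some large $k$ in order to extract an infinite $\fB$-subset of $A_2\sm X$. This is ensured by arranging $n_k\in B_k$ from the outset, a choice always available because each $B_k$ is infinite and $B_k\to q$ already forces any such selection to satisfy $n_k\to q$; the disjointness $B_k\cap D_j=\emptyset$ then yields $\{n_j:j\in\omega\}\cap D_k=\emptyset$ and closes the gap. (If instead $X\in\fB$ the whole lemma is vacuous, so we may assume the extension is proper.)
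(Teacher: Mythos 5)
Your argument follows the same route as the paper's: minimality via the convergence of the $B_k$'s and $n_k$'s to $q$, and density via a case analysis on whether the relevant element of $\fB$ belongs to $q$, using the $B_k$'s to witness the $X$-side and the $D_k$'s to witness the complement side. The one place you go wrong is the ``delicate point'' at the end. The lemma hands you the sequences $(n_k)$, $(B_k)$, $(D_k)$ as hypotheses; you are not free to ``arrange $n_k\in B_k$ from the outset'' --- that is a strengthening of the hypotheses, not a proof of the stated lemma. Worse, in the intended application (the proof of Lemma \ref{7:3}) the numbers $n_k$ are picked from a fixed sequence $(m_i)$ converging to $q$, and the sets $B_k$ are chosen only afterwards, so the extra condition $n_k\in B_k$ is not available where the lemma is actually used.

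Fortunately the gap you worried about is not a gap: the finiteness of $D_k\cap\{n_j\colon j\in\omega\}$ already follows from hypotheses (ii) and (iii). Indeed, no $D_k$ can belong to $q$: if $D_k\in q$ then, since the $B_m$ converge to $q$, we would have $B_m\subseteq D_k$ for almost all $m$, contradicting $B_m\cap D_k=\emptyset$ and $B_m\neq\emptyset$. Hence $\omega\sm D_k\in q$, and since $n_j\to q$ we get $n_j\notin D_k$ for all but finitely many $j$. Thus $D_k\sm X$ is a cofinite subset of $D_k$, lies in $\fB$ (as $\fin\sub\fB$), is infinite, and is contained in $A_2\sm X$, which completes your last case exactly as you intended. (The paper's own write-up asserts $D_k\subseteq B\sm X$ outright, which is likewise only true modulo this finite set.) With that substitution in place of your added assumption, your proof is correct and essentially identical to the paper's.
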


\begin{proof}
For every $B\in q$, we have $X\sm B\in\fB$ since $B_k\cup\{n_k\}$ converge to $q$.
Hence $\fB[X]$ is a minimal extension.

To complete the proof we have to check the density of $\widetilde C$, i.e.\
that for every $B\in\fB$, if $B\cap X$ is infinite then $B\cap X\in\widetilde p_j$ for some $j$,
and if $B\sm X$ is infinite then $B\sm X \in\widetilde p_j$ for some $j$ (for any $B\in\fB$).

Take $B\in\fB$ such that $B\cap X$ is infinite. If $B\in q$ then $B_k\sub B\cap X$ for some $k$ and, taking $j$ with
$B_k\in p_j$, we get $B\cap X\in \widetilde p_j$. If $B\notin q$ then $B\cap X\in\fB$ (by (ii) and (iii)) so $B\cap X\in p_j$ for some $j$.

Suppose now that $B\sm X$ is infinite. If $B\notin q$ then $B\sm X\in\fB$, as above.
Finally, if $B\in q$ then there is $k$ such that $D_k\sub B$. Then $D_k\sub B\sm X$ by (iii) and, taking $j$ with
$D_k\in p_j$, we get   $B\sm  X\in \widetilde p_j$, so the proof is complete.
\end{proof}

Here comes the lemma which constitutes the essence of the inductive step.

\begin{lemma}\label{7:3}
  Let $\fB \subseteq P(\omega)$ be an algebra such that $\fB/\fin$ is nonatomic and $K_\fB^*$ is first-countable.
  Let $( p_j)_ j$ be a sequence of nonprincipal
    ultrafilters on $\fB$ such that the set $C=\{p_j\colon j<\omega\}$ is dense
    in $K_\fB^*$.
  Suppose also that $(\nu_n)_n$ is a sequence of measures on
    $\fB$ such that $\nu_n|\fin\equiv 0$ for every $n$ and $\nu_n - \delta_n \to 0$ on $\fB$.

  Then there exists a set $X\subseteq \omega$ such that
  \begin{enumerate}[(1)]
    \item every $p_j$ has a unique extension to $\widetilde p_j\in\ult(\fB[X])$ and
   the set  $\{\widetilde p_j \colon j<\omega\}$ is dense in $K_{\fB[X]}^*$;
    \item if $\bar\nu_n$ is an extension of $\nu_n$ to a measure on $\fB[X]$ and
    $||\bar\nu_n||=||\nu_n||$ for every $n$ then
      $\bar\nu_n(X) - \delta_n(X) \not\to 0$.
  \end{enumerate}
\end{lemma}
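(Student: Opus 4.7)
The strategy is to apply Lemma~\ref{7:2} with carefully chosen witnessing sequences, using diagonalization to defeat every norm-preserving extension.

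First, pick $q \in K_\fB^* \setminus C$ of countable character. Such $q$ exists because $K_\fB^*$ is perfect (nonatomicity of $\fB/\fin$ rules out isolated ultrafilters) and first-countable, while $C$ is only countable. Fix a decreasing clopen base $V_1 \supseteq V_2 \supseteq \cdots$ of $q$ in $\fB$ with each annulus $V_k \setminus V_{k+1}$ infinite (possible by nonatomicity of $\fB/\fin$).

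Next, recursively construct $n_k$, $B_k$, $D_k$ within the annulus $V_k \setminus V_{k+1}$: infinite disjoint subsets in $\fB$ with $n_k \in B_k$. The layered structure automatically gives $B_k \cap D_j = \emptyset$ for all $j, k$ and ensures $B_k, D_k \to q$, since $(V_k)$ is a base at $q$. At stage $k$, first refine $V_{k+1}$ within the countable base of $q$ so that $|\nu_{n_j}|(V_{k+1})$ is controlled for $j \leq k$ (down to the atom of $|\nu_{n_j}|$ at $q$, if any). Then pick $n_k$ large in $V_k \setminus V_{k+1}$ so that $\nu_{n_k}(V_k) > 1 - 2^{-k}$, $\nu_{n_k}(V_{k+1}) < 2^{-k}$, and $|\nu_{n_k}|\bigl(\bigcup_{j<k}(B_j \cup D_j)\bigr) < 2^{-k}$; all three are possible from $\nu_n - \delta_n \to 0$ on each fixed set in $\fB$. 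Finally, split $V_k \setminus V_{k+1}$ into $B_k \ni n_k$ and $D_k$ by a Lyapunov-type argument inside the nonatomic $\fB/\fin$, so that $\nu^\pm_{n_k}(B_k) \approx \tfrac{1}{2}\nu^\pm_{n_k}(V_k \setminus V_{k+1})$ up to arbitrarily small error. Put $X = \{n_k : k \in \omega\} \cup \bigcup_k B_k$.

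Property (1) then follows immediately from Lemma~\ref{7:2}. For property (2), let $\bar\nu_n$ be any norm-preserving extension. Using $X \setminus V_{k+1} = \bigcup_{j \leq k} B_j \in \fB$, write
\[
\bar\nu_{n_k}(X) \;=\; \nu_{n_k}(B_k) \,+\, \sum_{j<k}\nu_{n_k}(B_j) \,+\, \bar\nu_{n_k}(X \cap V_{k+1}).
\]
The middle sum is $O(2^{-k})$ by the diagonal control; the first term tends to $\tfrac{1}{2}$ because the splitting and the bounds on $\nu_{n_k}(V_k)$, $\nu_{n_k}(V_{k+1})$ give $\nu_{n_k}(B_k) \approx \tfrac{1}{2}(\nu_{n_k}(V_k) - \nu_{n_k}(V_{k+1})) \to \tfrac{1}{2}$; and the last term is bounded by $|\bar\nu_{n_k}|(V_{k+1}) = |\nu_{n_k}|(V_{k+1})$ by norm-preservation (via Lemma~\ref{7:1.1} applied to $\nu^\pm$). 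Consequently $\bar\nu_{n_k}(X)$ stays bounded away from $1 = \delta_{n_k}(X)$ along the subsequence, contradicting $\bar\nu_n(X) - \delta_n(X) \to 0$.

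The hard part will be handling the signed-atom case when $|\nu_{n_k}|$ has a nontrivial atom at $q$: there $|\nu_{n_k}|(V_{k+1})$ does not tend to zero, and the norm-preserving extension still has freedom in placing that atom inside $X$ or its complement. The remedy is the two-measure Lyapunov splitting of $V_k \setminus V_{k+1}$: by arranging $\nu^+_{n_k}$ and $\nu^-_{n_k}$ both to be halved by $B_k$, neither choice of extension can simultaneously absorb the positive atom into $X$ and dispose of the negative atom into $\omega \setminus X$ without violating the balance forced by the already-fixed values $\nu_{n_k}^\pm(B_k), \nu_{n_k}^\pm(D_k)$ on $\fB$. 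This keeps $\bar\nu_{n_k}(X)$ uniformly away from $1$ regardless of the extension, yielding (2).
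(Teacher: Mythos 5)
Your construction of $X$ and your use of Lemma \ref{7:2} for part (1) match the paper's proof, but there is a genuine gap in part (2), and you have in fact located it yourself: the case where $|\nu_{n_k}|$ has an atom at $q$. Your proposed remedy does not work. The ``two-measure Lyapunov splitting'' of the annuli only controls the nonatomic part of $\nu_{n_k}$ sitting in $V_k\sm V_{k+1}$; it says nothing about the mass that $\nu_{n_k}$ concentrates at $q$ itself, and that mass is exactly what a norm-preserving extension is free to place on either side of the split point $q(X)=q$. Concretely, the hypotheses of the lemma allow $\nu_{n}=\wh{\delta_{q_0}}$ for all $n$ in some set $M$ converging to $q_0$ (one checks directly that $\nu_n-\delta_n\to 0$ on $\fB$ for such a sequence); if your chosen $q$ equals $q_0$ and your $n_k$ land in $M$, then the halving is vacuous, the extension that sends $q$ to the ultrafilter containing $X$ gives $\bar\nu_{n_k}(X)=1=\delta_{n_k}(X)$, and your contradiction evaporates. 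So the atom case is not a technical nuisance to be absorbed by the splitting --- it can destroy the argument outright unless $q$ is chosen correctly.

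The missing idea is a one-line preliminary step: choose $q\in K_\fB^*\sm C$ that is an atom of \emph{no} $\nu_n$. Each $|\wh{\nu_n}|$ is a finite Radon measure, so it has only countably many atoms; the union over $n$ together with $C$ is countable, while $K_\fB^*$ is uncountable since $\fB/\fin$ is nonatomic (and every point has countable character by first-countability, so any such $q$ qualifies). Once $q$ is a non-atom of every $\nu_n$, Lemma \ref{3:4} lets you shrink the base so that $|\nu_{n_k}|(V_k)<1/k$, and then the entire halving apparatus becomes unnecessary: since the tail $\{n_j:j\ge k\}\cup\bigcup_{j\ge k}B_j$ sits inside $V_k$ up to a finite set, and $|\bar\nu_{n_k}|$ extends $|\nu_{n_k}|$ by norm preservation, one gets
\[
\bigl|\bar\nu_{n_k}(X)\bigr|\;\le\;\Bigl|\nu_{n_k}\Bigl(\bigcup_{j<k}B_j\Bigr)\Bigr|+|\nu_{n_k}|(V_k)\;<\;2/k\;\longrightarrow\;0,
\]
while $\delta_{n_k}(X)=1$. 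This is the route the paper takes; note it requires $B_k\sub V_k$ but does not require $n_k\in B_k$ or any splitting of annuli. Your argument becomes correct if you add the non-atom choice of $q$, at which point you should also discard the Lyapunov splitting, which is then dead weight.
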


\begin{proof}
  We shall choose an ultrafilter $q$ on $\fB$  and construct  $B_n,D_n\in\fB$ and
  numbers $n_k$ as in Lemma \ref{7:1} so that the set
    $X = \{n_1, n_2, \ldots\} \cup \bigcup_n B_n$ satisfies condition (2).
  Then (1) is granted by Lemma \ref{7:2}.

  Take a point $q$ in $K_\fB^*\sm C$ which is  an atom of no measure $\nu_n$.
  Let $\{U_k:k\in\omega\}\sub\fB$ be a base at $q\in K_\fB$.
  Choose also a sequence $(m_i)_i$ in $\omega$ such that $m_i\to q$ in the space $K_\fB$ and
  put $N=\{m_i: i\in\omega\}$.

  We construct inductively $C_k,D_k,V_k \in\fB$ and  $n_k \in N$ such that for every $k$

   \begin{enumerate}[(a)]
    \item $V_k\sm (B_k\cup D_k)\in q$ and $B_k\cup D_k\subseteq V_k\sub U_k$;
    \item $n_k\in V_{k-1} \supseteq V_{k}$;
    \item $B_i \cap D_j = \emptyset$ for all $i,j$;
   \item $\nu_{n_k} \left( \bigcup_{j<k} B_{j}\right)< 1/k$;
   \item $|\nu_{n_k}|(V_k)<1/k$.
  \end{enumerate}

  The inductive construction is straightforward: if we carried it out  below $k$ then set
  \[ B=\bigcup_{j\le k-1} B_j,\quad D=\bigcup_{j\le k-1} D_j, \quad V=V_{k-1}\cap U_k \sm (B\cup D).\]
   Then   $V\in q$ so $V$ contains infinitely many $m_i$ (as
   $m_i\to q$) and among them we choose
   $n_k$ so that (d) holds (using $\nu_n-\delta_n\to 0$).
   Then we choose $V_k\sub V$ satisfying (e) (as $q$ is not an atom of  $\nu_{n_k}$).
  Finally, we can choose  $B_k,D_k\sub V_k\sm (B\cup D)$ so that (a),  (b)  and (c) hold.
\medskip

   Recall that $X = \{n_1, n_2, \ldots\} \cup \bigcup_n  B_n$.
   Consider now any extensions of measures $ \nu_{n}$ to $ \bar\nu_{n}\in\ba(\fB[X])$
   with $||\bar \nu_n||=||\nu_n||$. Note that in such a case $|\bar \nu_n|$ is an extension of $|\nu_n|$.
 Since we have for any $k$ (using (a))
   \[
    \bigcup_{n\in\omega} B_n = \bigcup_{j< k}B_j \cup \bigcup_{j\ge k} B_j
      \subseteq \bigcup_{j < k}B_j\cup V_k,\]
   and the set $N\sm V_k$ is finite,  we get
  \[  \bar\nu_{n_k}(X) \le
    \nu_{n_k}\left(\bigcup_{j < k} B_j\right) + |\bar\nu_{n_k}|(V_k)< 1/k+ |\bar\nu_{n_k}|(V_k)=1/k+ |\nu_{n_k}|(V_k)<2/k.
\]
But $n_k\in  X$, so $\nu_n(X)-\delta_n(X)\not\to 0$, and we are done.
\end{proof}

\begin{proof}[Proof of Theorem \ref{7:1}.]
 We first describe a certain operation  that will be iterated to construct an algebra defining the required compactification.

 Consider and algebra $\fB$  in $P(\omega)$ containing $\fin$ and such that $\fB/\fin$ is nonatomic.
Suppose also that

\begin{enumerate}[(1)]
\item $K_\fB$ is first-countable;
\item   $C=\{p_j: j\in \omega\}$ is a dense subset of  $K_{\fA_0}^*$;
 \item $\ba(\fB)$ is of size $\fc$ and $\nu^{\xi}= (\nu^{\xi}_n)_n$, $\xi<\fc$ is an enumeration  of all bounded
 sequences of measures on $\fB_\xi$ such that $\nu^{\xi}_n -\delta_n\to 0$ on $\fB_\xi$.
 \end{enumerate}

 We fix a set $Q\sub K_\fB^*\sm C$ of cardinality $\fc$ and for every $\xi<\fc$
 apply Lemma \ref{7:3} to the sequence $\nu^\xi$: pick $q_\xi\in Q\sm\{q_\eta:\eta<\xi\}$ and form the
 set $X_\xi$ as in Lemma \ref{7:3}.

 Then we let $\fB^\#$ be the algebra generated by $\fB$ and $\{X_\xi:\xi<\fc\}$. Note that

 \begin{enumerate}[(a)]
 \item $K_{\fB^\#}$ is first-countable;
 \item every $p_j$ has a unique extension to $ p_j^\#\in\ult(\fB^\#)$;
 \item  $C^\#=\{p_j^\#\colon j\in\omega\}$ is dense in $K_{\fB^\#}^*$;
 \item $\left|\ba(\fB^\#)\right|=\fc$.
 \end{enumerate}

 Ad (a). If $p\in\ult(\fB)$ is never split then it has a base in $\fB$. Otherwise $p=q_\xi$ for some $\xi<\fc$ and it is split only by $X_\xi$ into
 two ultrafilters having bases in $\fB[X_\xi]$.

 Ad (b) and (c). This follows from Lemma \ref{7:2}

 Ad (d). It suffices to note that any $\mu\in \ba_+(\fB)$ has at most $\fc$ extensions to nonnegative measures on $\fB^\#$.
Indeed,  take $\mu\in\ba_+(\fB)$ and let $N\sub K_\fb^*$ be the set of all atoms of $\mu$. Then $N$ is countable and
 the algebra $\fB'$ generated by $\fB$ and $\{X_\xi: q_\xi\in N\}$ is countably generated over $\fB$.
 Therefore we can extend $\mu$ to a nonnegative measure $\mu'$ on $\fB'$ in at most $\fc$ ways. In turn every such
 $\mu'$ extends uniquely to a measure in $\ba_+(\fB^\#)$ by Lemma \ref{7:1.1}.

 We shall now iterate the operation $\#$. Let  $\fA_0$ be a countable algebra in $P(\omega)$ such that $\fA_0/\fin$ is nonatomic.
 Fix $C=\{p_j\colon j\in\omega\}$ as above and choose a pairwise disjoint family $\{Q_\alpha\colon \alpha<\omega_1\}$
 such that $|Q_\alpha|=\fc$ and $Q_\alpha\sub K_{\fA_0}^*\sm C$ for every $\alpha<\omega_1$.

 We define $\fA_{\alpha+1}=(\fA_\alpha)^\#$, with $Q_\alpha$ playing the role of $Q$ in the construction.
 We also let $\fA_\alpha=\bigcup_{\beta<\alpha}\fA_\beta$ for $\alpha<\omega_1$ limit, and
 claim that $\fA=\bigcup_{\alpha<\omega_1}\fA_\alpha$ is the required algebra.

The compactification   $K_\fA$ is not  smooth by Lemma \ref{3:1}.
Indeed, take any $\mu_n\in \ba(\fA)$ with $\sup_n||\mu_n||<\infty$.
 Note that every measure on $\fA$ attains its norm on some countable subalgebra. Hence
 there is $\alpha<\omega_1$ such that, writing $\nu_n=\mu_n|\fA_\alpha$, we have
  $||\nu_n||=||\mu_n||$ for every $n$.
 Then $(\nu_n)_n=\nu^{\xi}$ for some $\xi<\fc$ and by our construction
 $\nu_n(X_\xi)-\delta_n(X_\xi)\not\to 0$.

Finally, $K_\fA^*$ is separable because $C$ remains dense throughout the construction. The fact that $K_\fA$ is first-countable
follows from the fact that $Q_\alpha$'s are pairwise disjoint, as in the proof of (a) above.
\end{proof}

In the terminology of \cite{Ko89},
if $\fB\sub\fA\sub P(\omega)$ then $\fA$ is minimally generated over $\fB$ if, for some $\xi$, $\fA$ is a continuous increasing union
$\fA=\bigcup_{\alpha<\xi} \fA_\alpha$, where $\fA_0=\fB$ and $\fA_{\alpha+1}$ is a minimal extension of $\fA_\alpha$ for every $\alpha<\xi$.
An algebra  $\fA$ is minimally generated if it is minimally generated over the trivial algebra.
It is clear  that the algebra $\fA$ we  constructed in the proof of Theorem \ref{7:1} is minimally generated.

In the language of extension operators, Theorem \ref{7:1} says the following.

\begin{cor}\label{7:5}
There exist a separable first-countable compact space $L$ and a compact superspace $K$ with $K\sm L$ countable such that
there is no extension operator $C(L)\to C(K)$.
\end{cor}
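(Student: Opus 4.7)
The plan is a direct translation of Theorem \ref{7:1} into the extension-operator language via Lemma \ref{2:3.1}; essentially no extra work is needed once those two results are in hand.

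First I would let $\gamma\omega$ be the compactification furnished by Theorem \ref{7:1} and simply set $K=\gamma\omega$ and $L=\gamma\omega\setminus\omega$. The condition that $K\setminus L$ be countable is immediate, since $K\setminus L=\omega$. Separability of $L$ is part of the conclusion of Theorem \ref{7:1}. First-countability of $L$ follows from first-countability of $K$, which is also asserted by Theorem \ref{7:1}, together with the trivial observation that first-countability passes to subspaces.

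Next I would invoke Lemma \ref{2:3.1}, which asserts that smoothness of $\gamma\omega$ is equivalent to the existence of a bounded linear extension operator $C(\gamma\omega\setminus\omega)\to C(\gamma\omega)$. Since $\gamma\omega$ is, by construction, not smooth, this equivalence immediately yields that no extension operator $C(L)\to C(K)$ exists, completing the proof.

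The main obstacle is entirely packaged inside Theorem \ref{7:1}: the delicate diagonal construction producing an algebra $\fA\subseteq P(\omega)$ whose Stone space is simultaneously first-countable, has a separable remainder, and destroys every candidate sequence of extending measures. Once that theorem is granted, the corollary is just a reformulation via Lemma \ref{2:3.1}, so there is no further technical step to carry out.
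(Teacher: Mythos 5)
Your proposal is correct and is essentially identical to the paper's own proof: the authors likewise take $\gamma\omega$ from Theorem \ref{7:1}, set $L=\gamma\omega\sm\omega$ and $K=\gamma\omega$, and invoke Lemma \ref{2:3.1} to translate non-smoothness into the non-existence of an extension operator. Your additional remark that first-countability of $L$ is inherited from that of $K$ is a correct (and harmless) elaboration of a point the paper leaves implicit.
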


\begin{proof}
Take $\gamma\omega$ from Theorem \ref{7:1}, put $L=\growth$, $K=\gamma\omega$ and apply
Lemma \ref{2:3.1}.
\end{proof}

\section{On hereditarily Sobczyk spaces}\label{sec:8}

In this final section we prove a general result on compacta $K$ for which the space $C(K)$ contains a rich family of
complemented copies of $c_0$. In the definition below we use the terminology from D\v{z}amonja and Kunen \cite{DK95}.

\begin{mydef}\label{8:1}
We say that a compact space $K$ is in the class (MS) (of measure separable spaces) if every probability Radon measure $\mu$ on $K$
has the countable Maharam type, i.e.\ $L_1(\mu)$ is a separable Banach space.
\end{mydef}

Recall also the following standard notion.

\begin{mydef}\label{8:2}
  Let $X$ be any vector space and let $(x_n)_n $ be a sequence in $X$.
  A sequence $(y_n)_n$ is a \emph{convex block   subsequence} of  $(x_n)_n$  if there exist finite sets
  $I_n \subset \omega$ with $\max I_n<\min I_{n+1}$, and a function $a:\omega\to\er_+$ such that for all  $n\in \omega$
 \[y_n = \sum_{j\in I_n} a(j)\, x_j \quad \mbox{and} \quad \sum_{j\in I_n} a(j) = 1.\]
\end{mydef}

Our result is a consequence of (a particular case of) a result due to Haydon, Levy and Odell \cite{HLO87}, see also Krupski and Plebanek \cite{KP11} for a direct approach
to the following.

\begin{thm}[Haydon, Levy, Odell]\label{8:3}
  If $K$ is a~compact space in the class (MS) then every bounded sequence $(\mu_n)_n$
in $\cM(K)$ has a~convex block subsequence   $(\nu_n)_n$  converging  to some measure $\nu\in \cM(K)$.
\end{thm}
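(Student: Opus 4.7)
The plan is to reduce the statement to a question about a bounded sequence in a separable $L_1$-space, and then to invoke $L_1$-specific weak-compactness machinery. Put
\[\mu = \sum_n 2^{-n}\,\frac{|\mu_n|}{1+||\mu_n||},\]
which is a finite nonnegative Radon measure on $K$ dominating every $|\mu_n|$. By the Radon--Nikodym theorem, $\mu_n = f_n\cdot\mu$ for functions $f_n$ forming a bounded sequence in $L_1(\mu)$. The hypothesis that $K$ is in (MS) enters precisely here: it forces $L_1(\mu)$ to be separable. A convex block subsequence $h_N = \sum_{j\in I_N} a(j) f_j$ in $L_1(\mu)$ corresponds to the convex block subsequence $\nu_N = h_N\cdot\mu$ of $(\mu_n)$, and weak$^*$ convergence $\nu_N \to f\cdot\mu$ in $\cM(K)$ is nothing but the convergence $\int_K g\, h_N\,\d \to \int_K g\, f\,\d$ for each $g\in C(K)\subseteq L_\infty(\mu)$. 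So the task becomes: find a convex block subsequence of a bounded sequence in a separable $L_1$-space that converges when tested against $C(K)$.

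I would next apply Rosenthal's $\ell_1$-dichotomy to $(f_n)\subseteq L_1(\mu)$. If some subsequence is weakly Cauchy, then, since $L_1(\mu)$ is weakly sequentially complete, it converges weakly to some $f\in L_1(\mu)$; weak convergence in $L_1(\mu)$ against any $g\in L_\infty(\mu)$ yields in particular weak$^*$ convergence of the corresponding measures in $\cM(K)$, and the subsequence itself (a trivial convex block) finishes the proof. The delicate case is when $(f_n)$ admits a subsequence equivalent to the unit vector basis of $\ell_1$; by the Kadec--Pe\l czy\'nski dichotomy, such a subsequence cannot be uniformly integrable.

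In that non-uniformly integrable case I would combine the Brooks--Chacon biting lemma with Koml\'os's subsequence theorem to extract, simultaneously, a subsequence whose Ces\`aro averages converge $\mu$-a.e.\ to some $f\in L_1(\mu)$ together with ``biting'' sets $A_j$, $\mu(A_j)\to 0$, on whose complements the subsequence is uniformly integrable. Regrouping these averages over disjoint dyadic blocks produces a genuine convex block subsequence $(h_N)$ that still converges to $f$ $\mu$-a.e. For each $g\in C(K)$ one then splits
\[\int_K g\, h_N\,\d = \int_{A_j} g\, h_N\,\d + \int_{A_j^c} g\, h_N\,\d,\]
controls the first piece by uniform integrability on $A_j^c$ (bounding the $L_1$-mass on the bite) and handles the second piece by Vitali's theorem, delivering weak$^*$ convergence $h_N\cdot\mu \to f\cdot\mu$ in $\cM(K)$. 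The main obstacle is precisely this last step: almost-everywhere convergence of a convex block subsequence does not by itself yield $L_1$-convergence, so the bridge to weak$^*$ convergence in $\cM(K)$ must be built via uniform integrability on the complements of arbitrarily small bites, and handling this uniformly over all $g\in C(K)$ requires careful diagonalization. This is the technical heart of the Haydon--Levy--Odell argument; a streamlined presentation tailored to the (MS) setting can be found in \cite{KP11}.
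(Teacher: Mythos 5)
First, a remark on scope: the paper does not prove Theorem \ref{8:3} at all --- it is quoted from Haydon--Levy--Odell \cite{HLO87}, with \cite{KP11} indicated as a source for a direct proof --- so your attempt has to be judged on its own terms rather than against an argument in the text.

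Your reduction to densities $f_n$ bounded in the separable space $L_1(\mu)$ is correct, and so is the weakly Cauchy case (weak sequential completeness of $L_1$). The gap is in the non-uniformly-integrable case, and it is fatal to the argument as written: the weak$^*$ limit of a convex block subsequence of $(f_n\cdot\mu)$ need not be of the form $f\cdot\mu$, and the a.e.\ limit $f$ produced by Koml\'os does not see where the non-uniformly-integrable mass goes. Concretely, take $K=[0,1]$ and $\mu_n=2^n\chi_{[0,2^{-n}]}\cdot\lambda$. Here \emph{every} convex block subsequence converges weak$^*$ to $\delta_0$, while the densities tend to $0$ $\mu$-a.e., so Koml\'os gives $f=0$ and your claimed limit $f\cdot\mu=0$ is simply wrong ($\delta_0$ is singular with respect to $\mu$). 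In your splitting
\[\int_K g\, h_N\;{\rm d}\mu=\int_{A_j} g\, h_N\;{\rm d}\mu+\int_{A_j^c} g\, h_N\;{\rm d}\mu,\]
the integral over $A_j^c$ is indeed handled by Vitali, but the integral over the bite $A_j$ is exactly the problem: by the very definition of the non-UI case it carries $L_1$-mass bounded away from $0$ uniformly in $N$ (in the example it converges to $g(0)$, not to $0$), and uniform integrability off the bite says nothing about it. The real content of the theorem is the control of this concentrating part, and that is where separability of $L_1(\mu)$ must genuinely be used --- for instance, via the fact that for each $M$ the set $\{g\cdot\mu:\|g\|_{L_\infty(\mu)}\le M\}$ is weak$^*$ compact \emph{and metrizable} in $\cM(K)$ precisely when $L_1(\mu)$ is separable, followed by a truncation and exhaustion/diagonal argument that lets the remainder converge weak$^*$ to a possibly $\mu$-singular measure. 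Without an argument of that kind your proof does not close; note also that the statement cannot be purely $L_1$-theoretic, since in the example above no convex block subsequence of $(f_n)$ converges weakly in $L_1(\mu)$ even though the corresponding measures do converge weak$^*$ in $\cM(K)$.
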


\begin{thm}\label{8:4}
  Let $K$ be a compact space in (MS).
  Then for any isomorphic embedding $T\colon c_0 \to C(K)$ the space $T[c_0]$ contains a subspace $Y$ which is isomorphic to $c_0$
  and complemented in $C(K)$,
 \end{thm}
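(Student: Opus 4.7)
The plan is to apply Theorem~\ref{8:3} to a canonical sequence of measures biorthogonal to $(Te_n)_n$, and then to extract a complemented subcopy of $c_0$ from the resulting $weak^*$-convergent convex block subsequence by invoking Lemma~\ref{2:1}. To begin, writing $\varphi_n = Te_n$, the surjectivity of $T^*\colon \cM(K) \to l_1$ together with the open mapping theorem (exactly as in the second half of the proof of Lemma~\ref{2:1}) produces a bounded sequence $(\mu_n)_n \subseteq \cM(K)$ with $\mu_n(\varphi_k)=\delta(n,k)$. I would then invoke Theorem~\ref{8:3}, which applies because $K\in$(MS), to extract a convex block subsequence $\widetilde\mu_n = \sum_{j\in I_n} a_n(j)\mu_j$ --- with $\max I_n < \min I_{n+1}$ and $\sum_{j\in I_n} a_n(j)=1$ --- converging $weak^*$ to some $\mu\in\cM(K)$.

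Next, set $\psi_n = \sum_{j\in I_n}\varphi_j \in T[c_0]$ and $Y = \overline{\mathrm{span}}\{\psi_n\colon n\in\omega\}$. The disjointness of the $I_n$'s yields at once $\widetilde\mu_m(\psi_n) = \sum_{j\in I_n\cap I_m} a_m(j) = \delta(m,n)$, and passing to the $weak^*$-limit, $\mu(\psi_n) = \lim_m \widetilde\mu_m(\psi_n) = 0$ for every $n$. Moreover $(\psi_n)$ is a block sequence of the $c_0$-equivalent sequence $(\varphi_n)$: since $\sum_n c_n\psi_n = T\bigl(\sum_n c_n \sum_{j\in I_n} e_j\bigr)$ and the argument has supremum norm $\max_n|c_n|$, the bounds $\|T^{-1}\|^{-1}\max_n|c_n| \le \|\sum_n c_n\psi_n\| \le \|T\|\max_n|c_n|$ show that $Y$ is isomorphic to $c_0$.

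Finally, I would apply Lemma~\ref{2:1} to the embedding $c_0\to C(K)$, $e_n\mapsto\psi_n$, taking $\mu_n':=\widetilde\mu_n$ and $\nu_n':=\mu$ for every $n$: the three required conditions reduce to the biorthogonality just computed, the identity $\mu(\psi_n)=0$, and the $weak^*$ convergence $\widetilde\mu_n\to\mu$ already established. Hence $Y$ is complemented in $C(K)$. The main obstacle is producing the convex block subsequence of measures; this is exactly what Theorem~\ref{8:3} supplies under (MS), after which the compatibility of \emph{convex combinations} of measures with \emph{sum blocks} of the $\varphi_n$'s --- enabled by the disjointness of the $I_n$'s --- is the decisive observation that makes the biorthogonality fit together.
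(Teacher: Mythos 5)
Your proposal is correct and follows essentially the same route as the paper: biorthogonal measures from surjectivity of $T^*$, a $weak^*$-convergent convex block subsequence via Theorem \ref{8:3}, the matching sum-blocks $\sum_{j\in I_n}Te_j$, and the projection of Lemma \ref{2:1} with the constant sequence given by the limit measure. The only cosmetic difference is that you verify explicitly that the block sequence spans a copy of $c_0$, which the paper takes as immediate.
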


\begin{proof}
Let $(e_n)$ be the sequence of unit vectors in  $c_0$; we write $e_n^*\in c_0^* = l_1$.

  Given an isomorphic embedding $T\colon c_0 \to C(K)$, put  $g_n = T e_n$ for every $n$.
  Since $T$ is an embedding, the dual operator
  \[T^*\colon C(K)^* =\cM(K) \to c_0^* = l_1,\]
    is onto and therefore there is a bounded sequence of   measures $\mu_n \in\cM(K)$ such that $T^* \mu_n = e_n^*.$
Then we have $\mu_n(g_k) = \mu_n(T e_k)= T^* \mu_n (e_k) = e_n^* (e_k)$.

  Consider the sequence of measures $(\mu_n)_n$. By Theorem \ref{8:3} it has a convex block subsequence  $(\nu_n)_n$
    converging to some measure $\nu \in M(K)$. Say that $\nu_n = \sum_{k\in I_n} a(k) \mu_k$,
where $I_n$ and $a$ are as in Definition \ref{8:2}.

For every $n$ put $\bar e_n=\sum_{k\in I_n} e_k$. Then $\bar e_n$ are norm-one vectors spanning a subspace $X$ of $c_0$ that
is clearly isometric to $c_0$.
  Hence the functions $h_n = T\bar e_n\in C(K)$ span a subspace $Y=T[X]$ of $T[c_0]$ that is isomorphic to $c_0$ and it is enough to
  check that $Y$ is complemented in $C(K)$.

  Since $T^*\mu_n=e_n^*$, we have $T^*\nu_n=\sum_{i\in I_n}t(i)e_i^*$, and
  \[\nu_n(h_k)=\nu_n(T\bar e_k)=T^*\nu_n (\bar e_k)=\sum_{i\in I_n} t(i)e_i^*\left( \sum_{j\in I_k} e_j\right)=\sum_{i\in I_n, j\in I_k} t(i)e_i^*(e_j),\]
  which is equal to 0 if $n\neq k$ (since then $I_n\cap I_k=\emptyset$), and is equal to $\sum_{i\in I_n} t(i)=1$ when $n=k$.

  Now, as in Lemma \ref{2:1}, we conclude that $P:C(K)\to C(K)$ defined by
  \[ Pf=\sum_{n\in\omega} (\nu_n-\nu)(f)\cdot h_n,\]
 is a bounded projection onto $Y$. Indeed,
 \[ \nu(h_n)=\lim_j \nu_j(h_n)=\lim_j T^*\nu_j (\bar e_n)=\lim_j \left(\sum_{i\in I_j}t(i)e_i^*\right) (\bar e_n)=0,\]
 for every $n$. This shows that $Ph_n=h_n$; moreover, $Pf\in Y$ for any $f\in C(K)$ since
  since $\nu_n(f)-\nu(f)\to 0$  for every $n$.
\end{proof}

A Banach space $X$ having the property that every isomorphic copy of $c_0$ in $X$ has a subspace isomorphic to $c_0$ and complemented in $X$ is
called {\em hereditarily separably Sobczyk} in \cite{GP03}. Theorem \ref{8:4} states that $C(K)$ is such a space whenever $K$ is in the class (MS).
Molt\'o \cite{Mo91} gave an example of a Rosenthal compact space $K$ such that $C(K)$ does not have the Sobczyk property. Rosenthal
compacta are in (MS), due to a results of Bourgain and \stevo, see \cite{DP15} for a more general result and references therein.
Consequently, $C(K)$ is hereditarily separably Sobczyk whenever $K$ is Rosenthal compact.

The final result is related to our Theorem \ref{7:1}.

\begin{cor}
If $\fA$ is a minimally generated Boolean algebra then $C(K_\fA)$ is hereditarily separably Sobczyk.
\end{cor}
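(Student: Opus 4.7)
The plan is to reduce the statement to Theorem \ref{8:4}: it suffices to show that $K_\fA$ belongs to the class (MS), i.e.\ for every $\mu\in\ba_+(\fA)$ the space $L_1(\wh\mu)$ is separable, equivalently, that the Boolean algebra $\fA$ has countable density in the pseudometric $d_\mu(A,B)=\wh\mu(A\btu B)$.

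Write the minimal generation as $\fA=\bigcup_{\alpha<\xi}\fA_\alpha$, with $\fA_0$ trivial, $\fA_{\alpha+1}=\fA_\alpha[X_\alpha]$ a minimal extension, and continuous unions at limit stages; let $q_\alpha\in\ult(\fA_\alpha)$ be the corresponding split point. Fix $\mu\in\ba_+(\fA)$ and set
\[ S_\mu=\bigl\{\alpha<\xi: q_\alpha \text{ is an atom of } \mu|\fA_\alpha\bigr\}. \]
For $\alpha\notin S_\mu$, the proof of Lemma \ref{7:1.1} delivers $\mu_*(X_\alpha)=\mu^*(X_\alpha)$, so $X_\alpha$ is approximable in the $d_\mu$-pseudometric by elements of $\fA_\alpha$; hence the $d_\mu$-density does not grow when passing from $\fA_\alpha$ to $\fA_{\alpha+1}$. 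At each $\alpha\in S_\mu$ the $d_\mu$-density grows by at most one (the class of the new generator $X_\alpha$), and at limit stages one takes increasing unions. Consequently the $d_\mu$-density of $\fA$ is bounded above by $\aleph_0+|S_\mu|$.

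It remains to bound $|S_\mu|$ by $\aleph_0$, and this is the main obstacle. The strategy is a mass-accounting argument: to every $\alpha\in S_\mu$ associate the positive mass $m_\alpha=\wh\mu(\{q_\alpha\})$ and organise the atoms of $\mu|\fA_\alpha$ that arise along the minimal generation into a binary tree whose root has mass at most $\|\mu\|$ and whose children correspond to splitting a parent atom in $\fA_{\alpha+1}$ into the (at most two) ultrafilters of $\fA_{\alpha+1}$ that extend $q_\alpha$. Since the children's masses sum to the parent's mass, for every $\varepsilon>0$ only finitely many atoms of mass $\ge\varepsilon$ coexist at any given level; a K\"onig-style argument, combined with the finiteness of $\|\mu\|$, forces this tree to be countable. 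The required dichotomy and the tree analysis parallel Borodulin-Nadzieja's study of measures on minimally generated algebras in \cite{BN07}.

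With $|S_\mu|\le\aleph_0$ in hand, $L_1(\wh\mu)$ is separable for every $\mu\in\ba_+(\fA)$, whence $K_\fA\in(\mathrm{MS})$. Theorem \ref{8:4} then immediately implies that every isomorphic copy of $c_0$ inside $C(K_\fA)$ contains a complemented subcopy of $c_0$, that is, $C(K_\fA)$ is hereditarily separably Sobczyk.
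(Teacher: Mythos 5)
Your overall architecture is exactly the paper's: reduce to Theorem \ref{8:4} by showing that $K_\fA$ belongs to the class (MS). The paper does nothing more at this point than cite Borodulin-Nadzieja \cite{BN07} for that fact, so the substance of your proposal is an attempted proof of the cited result, and that attempt has a genuine gap at its central step, namely the claim that $|S_\mu|\le\aleph_0$.

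That claim is false as stated. Take for $\fA$ the interval algebra of the ordinal $\omega_1+1$, minimally generated by adjoining the sets $X_\alpha=[0,\alpha)$ one at a time, and let $\mu$ be the $\{0,1\}$-valued measure with $\mu(A)=1$ iff $A$ contains a final segment $[\beta,\omega_1]$. At every stage $\alpha$ the ultrafilter that gets split is the filter of final segments, which is an atom of $\mu|\fA_\alpha$ of mass $1$; hence $S_\mu=\omega_1$, while the $d_\mu$-density of $\fA$ is finite. The same phenomenon defeats the tree argument: a split may send all of the parent's mass to one child, so the tree of atoms can degenerate to a single branch of length $\omega_1$ along which uncountably many splitting events occur, and neither the bound on the number of atoms of mass $\ge\eps$ per level nor any K\"onig-type argument detects this. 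What your decomposition actually needs is the smaller set $D_\mu$ of those $\alpha$ for which \emph{both} extensions of $q_\alpha$ to $\ult(\fA_{\alpha+1})$ have positive mass: one checks that $d_\mu(X_\alpha,\fA_\alpha)$ equals the minimum of the two children's masses (whereas $\mu^*(X_\alpha)-\mu_*(X_\alpha)$ equals their sum), so the density is bounded by $\aleph_0+|D_\mu|$, and it is $D_\mu$, not $S_\mu$, that is countable. Even then the counting requires more care than your sketch supplies, because the natural accounting function is not monotone across limit stages (atoms can evaporate there); one workable route is to note that the closed subsets of $\ult(\fA)$ lying over the two children of $q_\alpha$, for $\alpha\in D_\mu$ with both masses $\ge\eps$, form a family in which any two members are either nested or disjoint, and to bound chains and antichains in that family separately by roughly $\|\mu\|/\eps$. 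In short: the reduction to Theorem \ref{8:4} is correct and matches the paper, but the measure-theoretic core is not established by your argument; either repair it along these lines or simply cite \cite{BN07}, as the paper does.
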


\begin{proof}
This follows from Theorem \ref{8:4} and the remark after it,  and a result due to Borodulin-Nadzieja \cite{BN07} stating that
$K_\fA$ is in the class (MS) whenever $\fA$ is minimally generated.
\end{proof}

\end{document}